%
%
%
%
%

\documentclass{amsart}
\usepackage{amsmath}
\usepackage{amsfonts}
\usepackage{amssymb}
\usepackage[usenames,dvipsnames]{xcolor}

%
%
\usepackage{hyperref}
\hypersetup{
    unicode=false,          		
    pdftoolbar=true,        		
    pdfmenubar=true,        	
    pdffitwindow=false,     		
    pdfstartview={FitH},    		
    linktocpage=true,			
    pdfnewwindow=true,      	
    colorlinks=true,       			
    linkcolor=blue,          		
    citecolor=PineGreen,    	
    filecolor=magenta,      		
    urlcolor=cyan           		
}

%
%

\theoremstyle{plain}
\newtheorem{theorem}{Theorem}[section]
\newtheorem{corollary}[theorem]{Corollary}
\newtheorem{lemma}[theorem]{Lemma}
\newtheorem{proposition}[theorem]{Proposition}

\theoremstyle{definition}
\newtheorem{definition}[theorem]{Definition}

\theoremstyle{remark}
\newtheorem{remark}[theorem]{Remark}

\numberwithin{equation}{section}	

\begin{document}

%
%
%

\newcommand{\refnote}[1]{}

%
%

\newcommand{\kbar}{{\overline{k}}}
\newcommand{\xbar}{{\overline{x}}}
\newcommand{\ubar}{{\overline{u}}}

\newcommand{\zetabar}{{\overline{\zeta}}}

\newcommand{\eps}{\varepsilon}

\newcommand{\bfs}{\mathbf{s}}
\newcommand{\bft}{\mathbf{t}}
\newcommand{\bfts}{{\mathbf{s}}}

\newcommand{\re}{\mathrm{Re}\,}

\newcommand{\frakq}{\mathfrak{q}}

\newcommand{\calB}{\mathcal{B}}
\newcommand{\calF}{\mathcal{F}}
\newcommand{\calS}{\mathcal{S}}
\newcommand{\calT}{\mathcal{T}}
\newcommand{\calQ}{\mathcal{Q}}
\newcommand{\calX}{\mathcal{X}^{\epsilon}}

\newcommand{\bbC}{\mathbb{C}}
\newcommand{\bbR}{\mathbb{R}}

\newcommand{\C}{\mathbb{C}}
\newcommand{\R}{\mathbb{R}}

\newcommand{\dotarg}{\, \cdot \,}
\newcommand{\dee}{\partial}
\newcommand{\dbar}{\overline{\partial}}

\newcommand{\darr}{\downarrow}
\newcommand{\rarr}{\rightarrow}
\newcommand{\dint}{\displaystyle{\int}}

\newcommand{\bigO}[1]{\mathcal{O}\left( {#1} \right)}
\newcommand{\norm}[1]{\left\| {#1} \right\|}

\title[Global Solutions for the Novikov-Veselov Equation]{Global Solutions for the Zero-Energy Novikov-Veselov Equation by Inverse Scattering}

\author{Michael Music}
\address{
Department of Mathematics, University of Michigan,
2074 East Hall,
530 Church Street,
Ann Arbor, Michigan 48109-1043}
\author{Peter Perry}
\address{Department of Mathematics, University of Kentucky, 
Lexington, Kentucky, 40506--0027}
\date{\today}

\begin{abstract}
Using the inverse scattering method, we construct global solutions to the Novikov-Veselov equation for real-­valued decaying initial data $q_0$ with the property that the associated Schr\"{o}dinger operator $-­\dbar_x \dee_x + q_0$ is nonnegative. Such initial data are either critical (an arbitrarily small perturbation of the potential makes the operator nonpositive) or subcritical (sufficiently small perturbations of the potential preserve non-negativity of the operator). Previously, Lassas, Mueller, Siltanen and Stahel proved global existence for critical potentials, also called potentials of “conductivity type.” We extend their results to include the much larger class of subcritical potentials. We show that the subcritical potentials form an open set and that the critical potentials form the nowhere dense boundary of this open set. Our analysis draws on previous work of the first author and on ideas of P. G. Grinevich and S. V. Manakov.
\end{abstract}
\maketitle

\section{Introduction}

The Novikov-Veselov (NV) equation is the completely integrable, nonlinear
dispersive equation%
\begin{eqnarray}
q_{t}  &  =	&		4\re\left(  4\partial^{3}q+\partial\left(  qw\right)
-E\partial w\right) \label{eq:NV_E}\\
\overline{\partial}w  &  =  &-3\partial q\nonumber
\end{eqnarray}
Here $E$ is a real parameter, the unknown function $q$ is a real-valued
function of two space variables and time, and the operators $\partial$ and
$\overline{\partial}$ are given by%
$$
\partial 					=	\frac{1}{2}\left(  \partial_{x_{1}}-i\partial_{x_{2}}\right) \quad
\overline{\partial}  	=	\frac{1}{2}\left(  \partial_{x_{1}}+i\partial_{x_{2}%
}\right)  .
$$
At zero energy ($E=0$) it can also be written (after trivial rescalings) as
\begin{equation} 
\label{eq:NV}
q_t = -\partial_z^3 q -\overline{\partial}^3q +3\partial_z(qu)+3\overline{\partial}_z(q\bar{u}), \quad \mbox{where} ~~ \overline{\partial}_z u=\partial_z q.
\end{equation}
The NV\ equation \eqref{eq:NV} generalizes the celebrated Korteweg-de Vries (KdV) equation
$$
q_{t}=-6qq_{x}-q_{xxx}
$$
in the sense that, if $q(x_{1},t)$ solves KdV and 
$u_{x_{1}}(x_{1},t)=-3q_{x_{1}}(x_{1},t)$, then $q\left(  x_{1},t\right)  $ solves NV.

Equation \eqref{eq:NV_E} was introduced by A.\ P.\ Veselov and S.\ P.\ Novikov \cite{VN:1984} because of its connection with isospectral flows for the Schr\"{o}dinger operator in two space dimensions at fixed energy $E$. In their original work, Veselov and Novikov considered periodic Schr\"{o}dinger operators and periodic solutions of \eqref{eq:NV_E};  P.\ G.\ Grinevich, S.\ V.\ Manakov, R.\ G.\ Novikov, and S.\ P.\ Novikov developed an  inverse scattering method for nonzero energy $E$ and initial data vanishing at infinity \cite{Grinevich:2000,GM:1988,GN:1987,GN:1988}. We refer the reader to Kazeykina \cite{Kazeykina:2012} for a review of results and results on long-time asymptotics of solutions to \eqref{eq:NV_E} at nonzero energy, and to \cite{CMMPSS:2014} for a comprehensive review of the literature and for a self-contained introduction to the inverse scattering method for the NV equation at zero energy. In what follows we will outline some of the salient features of the NV equation and discuss the motivations for our work.

\refnote{The next few paragraphs add background material on the Novikov-Veselov equation}
Although the Novikov-Veselov equation does not arise from a physical model, it defines a continuum 
of completely integrable models (indexed by the energy $E$) which are closely related to physically 
motivated, two-dimensional, dispersive nonlinear equations such as the celebrated Kadomtsev-Petviashvilli (KP) equations. Formal arguments (see Grinevich \cite{Grinevich:2000} and Klein-Mu\~{n}oz \cite[Appendix B]{KM:2016} for discussion) indicate that the KP I equation 
$$ u_t + 6 uu_x + u_{xxx} =  3 \dee_x^{-1} u_{yy} $$
can be realized as a scaling limit of 
\eqref{eq:NV_E} as $E \rarr +\infty$, and the KP II 
equation
$$ u_t + 6 uu_x + u_{xxx} = - 3 \dee_x^{-1} u_{yy} $$
can be realized as a scaling limit of \eqref{eq:NV_E} as $E \rarr -\infty$.  The KP equations 
model the evolution of nonlinear, long waves with slow dependence on the transverse coordinate.
Moreover, the NV equation exhibits many dynamical phenomena that also occur for the KP equations.  For example, the NV equation at nonzero energy has line soliton solutions and algebraic solitons (see \cite{KK:2017} for a numerical study of the stability of such structures, their relationship with analogous structures for the KP I and KP II equations,  and a review of the literature), while the NV equation at zero energy has a static lump solution. Thus, the zero-energy NV equation is part of a continuum of dispersive equations which exhibit important nonlinear wave phenomena and whose limits describe phenomena of physical interest. Significantly,  inverse scattering for the NV equation is determined by the (elliptic) stationary Schr\"{o}dinger equation, rather than the more challenging parabolic and time-dependent Schr\"{o}dinger equations that underlie the respective inverse theory for the KP I \cite{FA:1983,Manakov:1981}  and KP II \cite{ZS:1974,ABF:1983} equations.

A fundamental question for all dispersive PDEs is to characterize the Cauchy data for which solutions exist globally in time, and determine the long-time evolution of the solution. For completely integrable systems, one expects the large-time dynamics of solutions to be closely related to the spectral theory of the linear operator associated to the Cauchy data via completely integrability. Thus for example in the KdV equation, an isospectral flow for the one-dimensional Schr\"{o}dinger equation, initial data $q(\cdot,0)$ corresponding to a reflectionless potential generate pure soliton solutions, initial data with no bound states generates pure radiation, and initial data with bound states and nonzero scattering data generate solutions which 
resolve into solitons.

A similarly precise and comprehensive description of large-time dynamics for two-dimensional dispersive equations remains elusive; however, explicit solutions to NV at zero energy give some hints as to how the ``spectral theory of the Schr\"{o}dinger operator at zero energy'' should determine the asymptotics of solutions to \eqref{eq:NV}.
Taimanov and Tsarev \cite{TT:2008} constructed explicit solutions of \eqref{eq:NV} which blow up in finite time;
Kazeykina and Mu\~{n}oz \cite{KM:2018} constructed a one-parameter family $q_c$ of solutions where $c\in (-c_0,c_0)$
so that $q_0$ is a static (lump) solution, and for $\pm c \in (0,c_0)$ $q_c$ blows up in finite time $c$ but 
scatters to zero as $\pm t \rarr \infty$. These phenomena should (and can) be understood in terms of the spectral theory of the associated Schr\"{o}dinger operator: for all known examples of blow-up in finite time, the Schr\"{o}dinger operator associated to the initial data has a negative-energy bound state. One is naturally led to conjecture that, in the absence of bound states for the two-dimensional Schr\"{o}dinger operator, solutions of NV exist globally in time. Our results show that this is indeed the case, in a sense more precisely formulated below.

\refnote{The next paragraph briefly discusses the motivation for CGO solutions and their role in Schr\"{o}dinger scattering}
We will linearize and solve the Novikov-Veselov equation at zero energy using the scattering transform for the two-dimensional Schr\"{o}dinger equation at zero energy. This scattering transform is defined through unbounded solutions of the Schr\"{o}dinger equation introduced by Faddeev \cite{Faddeev:1965}. To understand the motivation for these solutions, it is easiest to consider the case of scattering at a fixed energy $E>0$. The usual `physical' solutions of Schr\"{o}dinger scattering theory, namely those solutions of
$(-\Delta + q) \psi = 0$ with
$$ 
\psi(x,k) \sim 
	e^{i(k_1x_1+k_2x_2)} + a(k,x/|x|) \frac{e^{i|k||x|}}{\sqrt{|k||x|}} + o\left(\frac{1}{\sqrt{|x|}}\right), \quad (k_1,k_2) \in \R^2, \, \, |k|^2 = E, 
$$
fail to give sufficient information to recover $q$. Indeed, at the level of the 
Born approximation, straightforward calculation shows that the scattering amplitude $a$ recovers the Fourier transform of $q$ at most inside a disc of radius $2\sqrt{E}$.  

Faddeev \cite{Faddeev:1965} proposed to study solutions with the leading asymptotics
$\exp i(k_1x_1+k_2x_2)$ for \emph{complex} $(k_1,k_2)\in \C^2$ with $k_1^2 + k_2^2 = E$ (which will \emph{grow} exponentially in some directions), and showed that this larger family of solutions produces scattering data sufficient to recover the potential. Faddeev's solutions are also termed CGO (for complex geometric optics) solutions. 

In the case $E=0$, the set of `physical' scattering solutions is actually trivial and one needs Faddeev's CGO solutions to define a scattering transform. In the zero-energy case, Faddeev's solutions are parameterized by $(k_1,k_2) \in \C^2$ satisfy $k_1^2+k_2^2=0$. There are two complex varieties satisfying this condition parameterized by $(k, \pm ik), k \in \C$, but symmetries of the Schr\"{o}dinger equation imply that the CGO solutions are dependent. Therefore, we will take $(k_1,k_2) = (k,ik)$ for $k \in \C$ in what follows. 
Thus $k_1 x_1 + k_2 x_2 = k(x_1+ix_2)$ and the phase may be written $kx$ where $x=x_1+ix_2$.
In what follows, we will treat $x=(x_1,x_2)\in\mathbb R^2$ as a complex number $(x_1+ix_2)$. 

Assuming $q\in L^p(\mathbb R^2)$ for 
a fixed $p\in(1,2)$, we say that $\psi(x,k)$ is a CGO solution for $k \in \bbC\setminus\{0\}$ if
\begin{equation*}
-\dee_x \dbar_x \psi + q(x)  \psi = 0, 
\quad
e^{-ikx}\psi(x,k) -1 
\in W^{1,\tilde{p}}(\bbR^2)
\end{equation*}
for some $p \in (1,2)$, where $\tilde{p}$ is the Sobolev conjugate:
\[\frac{1}{\tilde{p}} = \frac{1}{p}-\frac{1}{2}.\]
It is more convenient to work with the normalized solutions 
$$\mu(x,k)=\exp(-ikx) \psi(x,k)$$ 
which satisfy
\begin{equation}
\dbar_x(\partial_x+ik) \mu(x,k)=q(x)\mu(x,k)\\[0.2cm]
\label{mu.schro}
\end{equation}
where 
\begin{equation*}
\mu(\,\cdot\,,k)-1\in W^{1,\tilde p}(\bbR^2).
\end{equation*}
If, for some $k$,  equation \eqref{mu.schro} admits a solution $h(x)$ with $h(x) \in W^{1,\tilde p}(\bbR^2)$, uniqueness fails and $k$ is called an \emph{exceptional point}. Below we will describe a spectral condition on $q(x)$ which rules out exceptional points.

The associated scattering data $\bft$ for the potential $q$ is
\begin{equation}
\label{t}
\bft(k) = \int_{\bbR^2} e_k(x) q(x)\mu(x,k) \, dm(x)
\end{equation}
where $dm(\dotarg)$ denotes Lebesgue measure on $\bbR^2$ and
\[ e_k(x) = \exp i\left(kx+\kbar \xbar\right). \]
The map $\calT:q \rarr \bft$ defined by \eqref{mu.schro} and \eqref{t} is the \emph{direct scattering map}.

The normalized CGO solution $\mu(x,k)$ can be recovered through the scattering data, and the potential can in turn be recovered from the reconstructed solutions $\mu(x,k)$.  Indeed, if
$$ \bfs(k) = \frac{\bft(k)}{\pi \kbar}, $$
the function $\mu(x,k)$ satisfies the $\dbar$-equation 
\begin{equation}
\label{eq:dbar-k.static}
 \left(\dbar_k \mu \right)(x,k) 
 	= e_{-x}(k) \bfts(k)\,  \overline{\mu(x,k)}, \quad
\mu(x,\dotarg)-1 \in L^{\tilde{r}}(\bbC)
\end{equation}
where $r \in (p,2)$ and $\tilde{r}$ is the Sobolev conjugate of $r$.
The potential $q(x)$ is recovered in turn using the reconstruction formula
\begin{equation}
\label{eq:q.recon}
q(x) = 
	\frac{i}{\pi}\dbar_x 
		\left( 
				\int_{\bbC} e_{-x}(k) \bfts(k) \, \overline{\mu(x,k)} \, dm(k) 
		\right).
\end{equation}
The map $\calQ: \bft \rarr q$ defined by \eqref{eq:dbar-k.static} and \eqref{eq:q.recon} is the \emph{inverse scattering map}.

Formal computations \cite{BLMP:1987} (see also \cite[\S 3.6]{CMMPSS:2014}) show that, if $q(x,\tau)$ solves NV, then the function $\bft(k,\tau)=\calT(q(\dotarg,\tau))(k)$ should evolve by the linear evolution
\begin{equation}
\label{t.flow}
\bft(k,\tau)=\exp \left( i \tau \left(k^3 + \kbar^3\right) \right) \bft(k,0). 
\end{equation}

Putting all of these ingredients together--the direct scattering transform $\calT$, the linearization \eqref{t.flow}, and the inverse transform $\calQ$--we get the solution formula
\begin{equation}
\label{pre.ISM}
	 q(x,\tau) = \calQ \left[ \exp(i((\dotarg)^3+\overline{(\dotarg)^3} \tau) \left( \calT q_0 \right)(\dotarg) \right](x).
\end{equation}
From \eqref{eq:dbar-k.static} and \eqref{t.flow}, it follows that, to solve the NV equation, we need to solve the $\dbar$-problem with parameters
\begin{equation}
\label{eq:dbar-k}
{\displaystyle \left(\dbar_k \mu \right)(x,k,\tau) = e_{-x}(k) \bfts(k,\tau)\,  \overline{\mu(x,k,\tau)}} \quad
\mu(x,\dotarg,\tau)-1 \in L^r(\bbC)
\end{equation}
and recover
\begin{equation}
\label{ISM}
q(x,\tau) = \frac{i}{\pi}\dbar_x \left(\int_{\bbC} e_{-x}(k) \bfts(k,\tau) \overline{\mu(x,k,\tau)} \, dm(k)\right)
\end{equation}
where we  define 
\[ \bfts(k,\tau)=\frac{\bft(k,\tau)}{\pi \kbar}.\]

As we will show, global existence for the formal solution \eqref{pre.ISM} within a class of real-valued, regular, and decaying initial data is guaranteed by a simple spectral condition on the initial data. For a real-valued function $q \in L_{\mathrm{loc}}^p(\bbR^2)$ for some $p>1$, we say that the operator 
$L=-\bar\partial_x\partial_x+q$ is nonnegative if, for all $\varphi \in C_c^\infty(\bbR^2)$,
\begin{equation}
\label{q.ps}
\int_{\mathbb R^2} \left| (\dee_x \varphi)(x) \right|^2 + q(x) \left| \varphi(x) \right|^2 \, dm(x)\geq 0.
\end{equation}

Murata \cite{Murata:1986} defined a trichotomy of potentials for the zero-energy Schr\"{o}dinger operators 
(see Simon \cite{Simon:1981} for an earlier and closely related trichotomy in the study of Schr\"{o}dinger semigroups). This trichotomy plays a crucial role in our analysis and may distinguish distinct dynamical regimes for the NV equation.

\begin{definition} 
Fix $p>1$. A potential $q\in L^p_{\mathrm{loc}}(\mathbb R^2)$ is
\begin{itemize}
\item[(i)] \emph{subcritical} if there is a positive Green's function for the operator $L$,
\item[(ii)]\emph{critical} if there is no positive Green's function but $L\geq 0$, and
\item[(iii)] \emph{supercritical} if $-\dbar_x\partial_x+q$ is not nonnegative.
\end{itemize}
\label{def:murata}
\end{definition}
 Murata (see \cite[Definition 2.1]{Murata:1986} and accompanying discussion) shows these are the only three possibilities, so the condition \eqref{q.ps} implies that $q$ is either critical or subcritical. 
Theorem 2.12 of \cite{Schro-Ops} implies that if $q$ is uniformly locally in $L^p(\mathbb R^2)$ for  $p>1$, then $-\Delta+q\geq 0$ if and only if there exists a positive distributional solution to the Schr\"odinger equation (we use here the fact that, for $p>1$, the uniformly locally $L^p$ potentials are contained in Kato class $K_2$ considered in \cite{Schro-Ops}; see \cite[\S 1.2, Remark 1(b)]{Schro-Ops} and see \cite[\S 2.5]{Schro-Ops} for further discussion and references to the original work of Allegretto, Moss, and Piepenbrink). 
 Additionally, if $q(x)$ has sufficient decay, the equation $-\dbar_x\dee_x \psi + q\psi=0$  admits a positive, \emph{bounded}, distributional solution $\psi$  in the critical case, and a positive solution with \emph{logarithmic growth} in the subcritical case \cite[Theorem 5.6]{Murata:1986}. These positive solutions are unique up to constant multiples. In Section \ref{sec:topo} we show that subcritical potentials are an open set in the topology $L^p_\rho(\mathbb R^2)$ with $p\in(1,2)$ and $\rho>1+2/\tilde p$ and the critical potentials are the boundary of this set.

The asymptotic behavior as $|x| \rarr \infty$ of the positive solution is closely related, via a Fourier-like duality, to the asymptotic behavior as $|k| \darr 0$ of the scattering transform $\bft(k)$. Indeed, Nachman \cite[Theorem 3.3]{Nachman:1996} showed that, for critical potentials, $| \bft(k)  | \leq C|k|^\eps$ for some $\eps>0$ and small $k$. This means that $\bfts(k)=\bigO{|k|^{\eps-1}}$ is in $L^p_{\mathrm{loc}}(\bbC)$ for any $p \in (1,\infty)$ and the integral operator \eqref{eq:Txtau} for the problem \eqref{eq:dbar-k} has range in $L^\infty$ functions using \eqref{eq:P-2} of Lemma \ref{lemma:P1}. By contrast, Music \cite{Music:2013} showed that, for subcritical potentials, 
\[ \bfts(k) \underset{|k| \rarr 0}{\sim} -\frac{1}{\kbar \log(|k|^2)}
		\left( 1+ \bigO{\left(-\log(|k|^2\right)^{-1}}\right)\]
and the same key integral operator does not have range in bounded functions. This singularity is the cause of some technical pain in our computations but, thanks to a density argument (see Lemma \ref{lem:mapcont} and Corollary \ref{cor:invsymmetry}), does not pose essential problems. 
In Proposition \ref{prop:critorsubcrit} we prove that any scattering transform that has this singularity and also has certain decay properties (see Definition \ref{def:xnp}) must come from a critical or subcritial potential.

To state the theorem, denote by $L_\rho^p(\bbR^2)$ the space
\[ \left\{ f \in L^p(\bbR^2): \left(1+|\dotarg|^2\right)^{\rho/2} f(\dotarg) \in L^p(\bbR^2) \right\} \]
and by $W^{n,p}_\rho(\bbR^2)$ the space of measurable functions with $n$ weak derivatives in $L^p_\rho(\bbR^2)$. 

\begin{theorem}
Let $q_0\in W^{5,p}_\rho(\mathbb R^2)$ for $p\in(1,2)$ and $\rho>1$ be a critical or subcritical potential. Then $q(x,\tau)$ given by \eqref{pre.ISM} is a global-in-time, classical solution of NV \eqref{eq:NV} with  $\lim_{\tau \rightarrow 0} q(x,\tau)=q_0(x)$ pointwise.
\label{thm:nv}
\end{theorem}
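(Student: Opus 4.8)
The plan is to realize the solution formula \eqref{pre.ISM} as a composition of three well-controlled maps and then verify, in turn, that the output is a classical solution, that it is global in time, and that it attains the initial data. The strategy decomposes along the factorization $q(x,\tau) = \calQ\bigl[ e^{i\tau((\dotarg)^3 + \overline{(\dotarg)^3})}\, \calT q_0 \bigr](x)$, so the first task is to show that the direct scattering map $\calT$ is well-defined on the class $W^{5,p}_\rho(\bbR^2)$ of critical or subcritical potentials, that the time-evolved data $\bft(k,\tau)$ lands in a scattering-data class stable under the flow, and that $\calQ$ returns a classical solution of NV. The spectral hypothesis enters decisively here: because $q_0$ is critical or subcritical, the operator $-\dbar_x \dee_x + q_0$ is nonnegative and admits a positive distributional solution, which (as recalled after Definition \ref{def:murata}) rules out exceptional points and thus guarantees that \eqref{mu.schro} is uniquely solvable for all $k \in \bbC\setminus\{0\}$. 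This is what makes $\bft(k)$ and hence $\bfts(k)$ well-defined for every nonzero $k$.

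The heart of the argument is the analysis of the $\dbar_k$-problem \eqref{eq:dbar-k} with the time-dependent scattering data $\bft(k,\tau)=e^{i\tau(k^3+\kbar^3)}\bft(k,0)$. I would first record that the modulation factor $e^{i\tau(k^3+\kbar^3)}$ has modulus one, so it does not disturb the $L^p$- and weighted-$L^p$-type bounds on $\bfts(k,\tau)$; only the oscillation in $k$ and $\tau$ matters for deriving the PDE. I would then solve \eqref{eq:dbar-k} by inverting $\dbar_k$ via the solid Cauchy (Cauchy--Pompeiu) transform, recasting it as a linear integral equation $(I - \mathcal{T}_{x,\tau})\mu = 1$ whose kernel involves $\bfts(k,\tau)\,\overline{(\dotarg)}$ and the complex conjugation; solvability and uniqueness in $L^r \oplus \bbC$ would follow from a Fredholm argument together with the symmetry/reality of the data inherited from $q_0$ being real-valued. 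The low-$k$ behavior of $\bfts$ is the crux: for critical potentials the bound $\bfts(k)=\bigO{|k|^{\eps-1}}$ keeps the operator mapping into $L^\infty$, but for subcritical potentials the logarithmic singularity $\bfts(k)\sim -1/(\kbar\log|k|^2)$ obstructs this directly. I would circumvent the obstruction exactly as the introduction signals: approximate a subcritical $q_0$ by potentials with better-behaved scattering data and pass to the limit using the continuity of the relevant maps (Lemma \ref{lem:mapcont} and Corollary \ref{cor:invsymmetry}), so that the reconstruction \eqref{ISM} remains valid in the limit.

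Having solved for $\mu(x,k,\tau)$, I would establish the requisite smoothness and decay in $(x,\tau)$ by differentiating the integral equation: the five weighted derivatives in $W^{5,p}_\rho$ are chosen precisely so that $\dee_x^a \dbar_x^b \mu$ and $\dee_\tau \mu$ exist and decay, letting me justify differentiating under the integral sign in \eqref{ISM}. The final computation is to verify that $q(x,\tau)$ so defined satisfies NV \eqref{eq:NV}. This I would do by showing that $\mu$ satisfies the two auxiliary linear equations of the Lax-type/compatibility system --- the $x$-equation \eqref{mu.schro} (with $q$ now time-dependent) and a $\tau$-equation encoding the third-order flow --- and then reading off \eqref{eq:NV} as the compatibility condition, i.e. the equality of mixed operations applied to $\mu$. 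Concretely, from \eqref{eq:dbar-k} one computes $\dbar_k$ of the candidate time-evolution equation for $\mu$ and checks that the cubic phase $k^3+\kbar^3$ produces exactly the $\dee_x^3 + \dbar_x^3$ and the $\dee_x(uq)$, $\dbar_x(\ubar q)$ terms after applying $\dbar_x$ and using \eqref{eq:q.recon}; the auxiliary field $u$ arises as the analogous reconstruction integral with a different power of $k$, and $\dbar_x u = \dee_x q$ follows from the corresponding integral identity.

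The main obstacle I anticipate is not the formal compatibility computation but the rigorous handling of the subcritical logarithmic singularity in $\bfts$ at $k=0$ simultaneously with the large-$k$ decay needed for the reconstruction integral \eqref{ISM} to converge and to be differentiable in $x$ and $\tau$. The oscillatory factor $e_{-x}(k)\,e^{i\tau(k^3+\kbar^3)}$ does not improve integrability, so the argument must extract all decay from $\bfts$ and from $\mu-1$, and the density/approximation step must be controlled uniformly enough that the limiting $q(x,\tau)$ is genuinely $C^1$ in $\tau$ and smooth in $x$, rather than merely a distributional solution. Balancing these two regimes --- the near-origin singularity and the far-field decay --- against the five derivatives and weight $\rho>1$ provided by the hypothesis is where the technical weight of the proof will lie.
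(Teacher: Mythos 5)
Your outline tracks the paper's architecture closely: the factorization through $\calT$, the linear flow \eqref{t.flow}, and $\calQ$; the absence of exceptional points from the spectral hypothesis; the Fredholm analysis of the $\dbar_k$-problem; the density/approximation argument to tame the logarithmic singularity of $\bfts$ for subcritical data; and the derivation of NV from a $\tau$-equation of motion for $\mu$. But there is one genuine gap, and it sits exactly at the step the paper identifies as its key technical contribution. You assert that $\mu(x,k,\tau)$, which for $\tau\neq 0$ is defined \emph{only} as the solution of the $\dbar_k$-problem \eqref{eq:dbar-k}, ``satisfies the $x$-equation \eqref{mu.schro} (with $q$ now time-dependent),'' but you give no mechanism for proving this. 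For $\tau \neq 0$ the evolved data $\bft(k,\tau)$ is not a priori the scattering transform of any potential, so the fact that the $\dbar_k$-solution also solves a Schr\"odinger equation with the reconstructed potential $q(x,\tau)=i\dbar_x a_1(x,\tau)$ --- and that this potential is \emph{real} --- must be established independently. The paper does this in Section \ref{sec.symmetry} by the Grinevich--Manakov argument: one proves that reality of $q$ corresponds to the symmetry $\bft(k)=\overline{\bft(-k)}$ (Lemmas \ref{lem:symmetry} and \ref{lem:invsymmetry}, via closed differential forms and Stokes' theorem, yielding $\mu(x,0)^2=\left[\overline{\mu(x,0)}\right]^2$ and then $\chi_1=\chi_2= 0$), observes that this symmetry is preserved by the flow \eqref{t.flow} (it is built into the class $\calX_{n,r}$), and extends everything to the full class by the continuity results (Corollary \ref{cor:invsymmetry}). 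Without this, your final compatibility computation cannot even be set up: the terms $\ubar q$ and the conjugates in the equation of motion for $\mu$ require reality of $q(x,\tau)$, and the paper stresses that precisely this point was the gap in the earlier proof of Lassas--Mueller--Siltanen--Stahel.

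A smaller misplacement: you invoke ``the symmetry/reality of the data'' to get solvability and uniqueness of the integral equation $(I-T_{x,\tau})\mu = 1$. That is not where reality enters. Solvability and uniqueness need only $\bfs \in L^2(\bbC)$ and follow from compactness of $T_{x,\tau}$ plus the Brown--Uhlmann vanishing theorem (Theorem \ref{thm:vanish}); the symmetry is needed later, for the reality of the reconstructed potential and the validity of the $x$-equation, as above. Relatedly, in the final verification the paper does not merely ``check that the cubic phase produces the right terms'': it forms $\Psi=\left[\dee_\tau -\left(\dbar_x^3+\partial_x^3+3ik\partial_x^2-3k^2\partial_x-3u(\partial_x+ik)-3\ubar\dbar_x \right)\right]\mu$, shows that $\Psi$ satisfies the same $\dbar_k$-equation as $\mu$, shows $\Psi=O(k^{-1})$ using the identities of Corollary \ref{cor:mu-identities}, and only then applies Theorem \ref{thm:vanish} to conclude $\Psi\equiv 0$, after which NV is read off at order $k^{-1}$. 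Your sketch is compatible with this, but the vanishing-theorem step is what makes ``compatibility'' a proof rather than a formal computation, and it in turn rests on the reality/symmetry input you left unproven.
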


The regularity assumption guarantees that the inverse scattering method will yield a classical solution of NV. By contrast, Angelopoulos \cite{Angelopoulos:2013} recently proved local well-posedness for NV in the space $H^s(\bbR^2)$ for any $s>\frac{1}{2}$. The inverse scattering method captures global properties of the solution at the expense of more stringent regularity and decay assumptions on the initial data.

The condition that the potential is subcritical or critical is necessary in the above theorem using our methods. This is because in \cite{MPS:2013}, Music, Perry, and Siltanen construct radial, compactly supported, supercritical potentials $q_0$ where the scattering transform $\bft(k)$ has a circle of singularities and the current formulation of the inverse scattering method fails.  P.G. Grinevich and R.G. Novikov \cite{GN:2012} give explicit examples of point potentials with similar contour-type singularities.
Taimanov and Tsarev (see \cite{TT:2013} and references therein) construct examples of supercritical potentials that give rise to solutions of the zero-energy NV equation that blow up in finite time.

The technical core of our result is a careful analysis of the $\dbar$-problem 
\eqref{eq:dbar-k} with parameters $x, \tau$. 
The first technical challenge is that the scattering data $\bft(k)$ for our class of potentials may be mildly singular as $|k| \darr 0$ (see Lemma \ref{lemma.scattering}). We deal with the singularity by proving continuity properties of the direct and inverse scattering maps and approximating $\bft(k)$, the scattering transform of a  potential $q$ in our class by a smooth function with compact support away from $k=0$. 

The second technical challenge is that, in order to prove that the reconstructed potential
\eqref{ISM} solves the NV equation, we must first prove that it is a real-valued function. We will show that this is the case by adapting ideas of Grinevich \cite{Grinevich:2000} and Grinevich-Novikov \cite{GN:1988}. 

Lassas, Mueller, Siltanen, and Stahel \cite{LMSS:2011} study the same problem for initial data of conductivity type which have very well-behaved scattering transforms as shown by Nachman in his 1996 paper \cite{Nachman:1996} on Calderon's problem.  These authors obtain a number smoothness and decay results for this class, but they cannot prove that the inverse scattering method yields solutions to the Novikov-Veselov equation. Combining our result with the earlier one from Lassas, Mueller, and Siltanen \cite{LMS:2007} gives global solutions to Novikov-Veselov which satisfy the decay condition $|q(x,\tau)|\leq \langle x\rangle^{-2}$ if we start with a critical potential $q(x,0)\in C^\infty_c(\mathbb R^2)$. Although this special case has already been proved by Miura map methods by Perry \cite{Perry:2014}, the proof give here is arguably more direct and simpler.

In Section \ref{sec:pre}, we state some theorems that will be used throughout the paper. In the first half, Section \ref{sec:pre}, we recall facts about the $\dbar$ problem, and in Section \ref{sec.music}, we recall results of Music \cite{Music:2013} on the properties of the scattering transform for critical and subcritical potentials  \cite{Music:2013}. Music's characterization of the range of $\calT$ will allow us to analyze the inverse map $\calQ$ rigorously. In Section \ref{sec:topo}, we prove that the set of subcritical potentials is open and that the set of critical potentials is its nowhere dense boundary. In Section \ref{sec.continuous}, we establish continuity in $q$ for the direct scattering map $\calT$, and continuity in $\bfts(k)$ of the inverse scattering map $\calQ$.  In Section \ref{sec.symmetry}, we exploit ideas of P.G. Grinevich  and S.V. Manakov \cite{GM:1988} to prove necessary and sufficient conditions for the scattering data $\bft(k)$ to be the scattering data of a real potential $q(x)$ repairing a gap \cite{Siltanen:2014} in the proof of \cite[Theorem 4.1]{LMSS:2011}. We apply the ideas of Grinevich-Manakov \cite{GM:1988} directly to the zero-energy case instead of treating it as a limit of negative-energy inverse scattering (see also Grinevich \cite{Grinevich:2000}). With this result, we prove that the reconstructed potential $q(x,\tau)$ stays real for all time. Crucially, the reality of the inverse map is needed to show that the evolved $\mu(x,k,\tau)$ continue to solve the problem \eqref{mu.schro}. In Section \ref{sec.solving}, we follow the method outlined in the review article of Croke, Mueller, Music, Perry, Siltanen and Stahel \cite{CMMPSS:2014} to show that the potential solves the Novikov-Veselov equation. We discuss some open problems in section \ref{sec.open}.


\subsection*{Acknowledgment}
We are grateful to the referee for a careful reading of this paper, for helpful suggestions,  and for pointing out an error in a previous version of the manuscript.

\section{Preliminaries}

\subsection{Notation}
\label{sec:note}
In what follows, $\norm{u}_{p}$ denotes the $L^p(\bbR^2)$ or $L^p(\bbC)$ norm, while $\norm{u}_{p'}$ denotes the $L^{p'}$ norm where $p^{-1} + (p')^{-1} = 1$.  The space $W^{n,p}(\bbR^2)$ consists of $L^p$ functions with weak  derivatives up to order $n$ in $L^p(\bbR^2)$, while the space $L^p_\rho(\bbR^2)$ consists of complex-valued measurable functions on $\bbR^2$ with $(1+|\dotarg|^2)^{\rho/2} f(\dotarg) \in L^p(\bbR^2)$. The space $W^{n,p}_\rho(\bbR^2)$ consists of $L^p_\rho(\bbR^2)$ functions with weak derivatives up to order $n$ belonging to $L^p_\rho(\bbR^2)$. Finally if $X$ is a Banach space, we denote by $C(\bbC,X)$ the space of continuous $X$-valued functions carrying the norm $\norm{f}_{C(\bbC,X)} = \sup_{x \in X} \norm{f(x)}_{X}$, and $\calB(X)$ denotes the Banach space of bounded operators from $X$ to itself with the usual norm.

\subsection{The $\dbar$-Problem}
\label{sec:pre}
Here we review some important facts about the $\dbar$ operator and its inverse that will be used in what follows. Further details and references can be found, for example, in \cite[section 2]{Perry:2013}
or \cite[chapter 4]{AIM:2009}.
We denote by $P$ the integral operator
\[ \left[ P f\right](z) = \frac{1}{\pi} \int_{\bbC} \frac{1}{z-\zeta} f(\zeta) \, dm(\zeta)\]
where, in the integrand,  $z=z_1+iz_2$ and $\zeta=\zeta_1+i\zeta_2$. We recall without proof the following basic estimates. In what follows, $C_p$ (resp. $C_{p,q}$) denotes a numerical constant depending only on $p$ (resp. $p,q$).

\begin{lemma}
\label{lemma:P1}$~$(i) For any $p\in\left(  2,\infty\right)  $ and $f\in
L^{2p/(p+2)}({\bbC})$,
\begin{equation}
\left\Vert Pf\right\Vert _{p}\leq C_{p}\left\Vert f\right\Vert _{2p/(p+2)}.
\label{eq:P-1}
\end{equation}
Moreover, $\nabla Pf \in L^{2p/(p+2)}(\bbC)$ and 
\begin{equation}
\label{eq:P-1a}
\left\Vert \nabla Pf \right\Vert_{2p/(p+2)} \leq C_p \left\Vert f \right\Vert_{2p/(p+2)}.
\end{equation}
(ii) For any $p,q$ with $1<q<2<p<\infty$ and any $f\in L^{p}\left(
{\bbC}\right)  \cap L^{q}\left(  {\bbC}\right)  $, the
estimate%
\begin{equation}
\left\Vert Pf\right\Vert _{\infty}\leq C_{p,q}\left\Vert f\right\Vert
_{L^{p}\cap L^{q}} \label{eq:P-2}%
\end{equation}
holds. 
\newline(iii) If $v\in L^{s}({\bbC})$, $p\in (2,\infty)$ and $q\in(2,\infty)$ with $q^{-1}%
+1/2=p^{-1}+s^{-1}$, then for any $f\in L^{p}({\bbC})$,
\begin{equation}
\left\Vert P\left(  vf\right)  \right\Vert _{q}\leq C_{p,q}\left\Vert
v\right\Vert _{s}\left\Vert f\right\Vert _{p}. \label{eq:P-4}%
\end{equation}
\end{lemma}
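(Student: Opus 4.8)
\emph{Remark on the proof.} Although these estimates are recalled without proof, here is the route I would take to establish them. All three are standard mapping properties of the solid Cauchy (Pompeiu) transform $P$, and the plan is to derive them from two classical inputs: the Hardy--Littlewood--Sobolev (HLS) inequality for the Riesz potential of order one in dimension two, and the $L^t(\bbC)$-boundedness of the Beurling transform $S=\dee P$ for every $t\in(1,\infty)$.

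For the first bound in (i) and for (iii), the key observation is that the convolution kernel $|z-\zeta|^{-1}$ is, up to a constant, the Riesz potential kernel $I_1$ on $\bbR^2\cong\bbC$, since in dimension $n=2$ the homogeneity exponent $n-\alpha=1$ corresponds to $\alpha=1$. The HLS inequality gives $\|I_\alpha g\|_b\le C\|g\|_a$ whenever $1<a<b<\infty$ and $b^{-1}=a^{-1}-\alpha/n$, which here reads $b^{-1}=a^{-1}-1/2$. Taking $b=p$ forces $a=\frac{2p}{p+2}$, and the admissibility requirement $1<a<p<\infty$ holds precisely when $p>2$; this yields \eqref{eq:P-1}. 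For (iii) I would first apply H\"older's inequality to obtain $\|vf\|_t\le\|v\|_s\|f\|_p$ with $t^{-1}=s^{-1}+p^{-1}$, and then apply HLS to $P(vf)$ with input exponent $t$. The stated hypothesis $q^{-1}+1/2=p^{-1}+s^{-1}$ is exactly the condition $q^{-1}=t^{-1}-1/2$, while $q>2$ guarantees $1<t<q$, so HLS applies and gives \eqref{eq:P-4}.

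For the gradient bound \eqref{eq:P-1a} I would use the identities $\dbar Pf=f$ and $\dee Pf=Sf$. Since $\nabla$ is a linear combination of $\dee$ and $\dbar$, boundedness of $S$ on $L^{2p/(p+2)}(\bbC)$ (valid because $\frac{2p}{p+2}\in(1,\infty)$ for $p>2$) gives $\|\nabla Pf\|_{2p/(p+2)}\le\|f\|_{2p/(p+2)}+\|Sf\|_{2p/(p+2)}\le C_p\|f\|_{2p/(p+2)}$. For (ii) I would split the kernel as $\tfrac{1}{z-\zeta}=K_0+K_\infty$, where $K_0$ is supported in $\{|z-\zeta|\le 1\}$ and $K_\infty$ in $\{|z-\zeta|>1\}$. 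In $\bbR^2$ the function $|w|^{-1}$ is integrable to any power $b<2$ near the origin and to any power $b>2$ near infinity, so $K_0\in L^{p'}$ (using $p'<2$, i.e.\ $p>2$) and $K_\infty\in L^{q'}$ (using $q'>2$, i.e.\ $q<2$). H\"older's inequality applied to each piece, together with translation invariance of the shifted-kernel norms, bounds the two contributions by $C_p\|f\|_p$ and $C_q\|f\|_q$ uniformly in $z$, proving \eqref{eq:P-2}.

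Since these are textbook facts, the only genuine work is the bookkeeping of Sobolev exponents and checking the admissibility ranges at each application of HLS. The one conceptual point worth flagging is in (ii): the $L^\infty$ estimate necessarily controls $f$ in two spaces, one with exponent above and one below $2$, because the kernel $|w|^{-1}$ fails to lie in any single $L^b(\bbC)$. This is precisely why the hypothesis is stated with the intersection $L^p\cap L^q$ rather than a single norm, and it is the reason $P$ cannot map a single $L^a$ space into $L^\infty$.
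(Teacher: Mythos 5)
The paper recalls this lemma without proof, citing standard references (Astala--Iwaniec--Martin and \cite[section 2]{Perry:2013}), so there is no in-paper argument to compare against; your proposal is correct and is precisely the standard route those references take: Hardy--Littlewood--Sobolev for \eqref{eq:P-1} and \eqref{eq:P-4} (with the exponent bookkeeping you carry out, all of which checks), $\dbar Pf=f$ together with $L^t$-boundedness of the Beurling transform for \eqref{eq:P-1a}, and the near/far kernel splitting with H\"older for \eqref{eq:P-2}. Your closing remark correctly identifies why the intersection $L^p\cap L^q$ in (ii) is unavoidable, namely that $|w|^{-1}$ lies in no single $L^b(\bbC)$.
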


The operator $P$ is the solution operator for the problem $\dbar u = f$:

\begin{lemma}
\label{lemma:P2}Suppose that $p\in(2,\infty)$, that $u\in L^{p}(\bbC)$,
that $f\in L^{2p/(p+2)}({\bbC})$, and that $\overline
{\partial}u=f$ in distribution sense. Then $u=Pf$. Conversely, if $f\in
L^{2p/(p+2)}({\bbC})$ and $u=Pf$, then $\dbar u=f$ in
distribution sense.
\end{lemma}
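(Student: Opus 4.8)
The plan is to prove the two implications separately, establishing the converse first since the forward statement reduces to it. The analytic heart of the matter is that $E(z) = 1/(\pi z)$ is the fundamental solution of $\dbar$, that is $\dbar E = \delta_0$ in distribution sense; equivalently, for every test function $\varphi \in C_c^\infty(\bbC)$ one has the Cauchy--Pompeiu identity
\[ \varphi(\zeta) = \frac{1}{\pi}\int_{\bbC} \frac{(\dbar\varphi)(z)}{\zeta - z}\, dm(z). \]
I would recall this identity (or prove it by excising a disk $B(\zeta,\eps)$, applying Stokes' theorem on the complement, and letting $\eps \darr 0$), which underlies both implications.

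For the converse, let $f \in L^{2p/(p+2)}(\bbC)$ and set $u = Pf$, so that $u \in L^p(\bbC)$ by \eqref{eq:P-1}. To compute $\dbar u$ in distribution sense I would pair against $\varphi \in C_c^\infty(\bbC)$ and write
\[ \langle \dbar(Pf), \varphi\rangle = -\int_{\bbC} (Pf)(z)\,(\dbar\varphi)(z)\,dm(z) = -\frac{1}{\pi}\int_{\bbC}\int_{\bbC} \frac{f(\zeta)}{z-\zeta}\,(\dbar\varphi)(z)\,dm(\zeta)\,dm(z). \]
The one step that needs justification is the interchange of integrations: because $\dbar\varphi$ is bounded and supported in a fixed compact set $K$, the function $\zeta \mapsto \int_K |z-\zeta|^{-1}|(\dbar\varphi)(z)|\,dm(z)$ is bounded and is $\bigO{|\zeta|^{-1}}$ at infinity, hence lies in $L^{q'}(\bbC)$ for $q = 2p/(p+2) \in (1,2)$; Hölder's inequality then makes the double integral absolutely convergent and Fubini applies. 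After interchanging and using $1/(z-\zeta) = -1/(\zeta - z)$, the inner integral is exactly $-\varphi(\zeta)$ by the Cauchy--Pompeiu identity, so that $\langle \dbar(Pf),\varphi\rangle = \int_{\bbC} f\varphi\, dm = \langle f, \varphi\rangle$. This is precisely $\dbar u = f$ in distribution sense.

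For the forward implication, suppose $u \in L^p(\bbC)$ with $\dbar u = f$ distributionally, and set $w = u - Pf$. The converse just proved gives $\dbar(Pf) = f$, hence $\dbar w = 0$ in distribution sense; by hypoellipticity of $\dbar$ (Weyl's lemma) $w$ coincides almost everywhere with an entire holomorphic function. Both $u$ and $Pf$ lie in $L^p(\bbC)$ (again by \eqref{eq:P-1}), so $w \in L^p(\bbC)$. An entire function in $L^p(\bbC)$ with $p < \infty$ must vanish identically: by the mean value property and Hölder's inequality,
\[ |w(z_0)| = \left| \frac{1}{\pi R^2}\int_{B(z_0,R)} w\, dm \right| \leq (\pi R^2)^{-1/p}\,\norm{w}_p \longrightarrow 0 \quad (R \to \infty), \]
so $w \equiv 0$ and $u = Pf$, as required.

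The only genuinely delicate point in this scheme is the Fubini justification in the converse; the fundamental-solution identity, the regularity step, and the Liouville-type vanishing are all classical and stable under the stated integrability hypotheses.
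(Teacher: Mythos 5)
Your proof is correct. Note that the paper does not actually prove Lemma \ref{lemma:P2}: it recalls it as a standard fact, pointing to \cite[section 2]{Perry:2013} and \cite[chapter 4]{AIM:2009}, and your argument---the Cauchy--Pompeiu identity plus Fubini for the converse, then hypoellipticity of $\dbar$ and an $L^p$ Liouville theorem for the forward direction---is exactly the standard proof found in such references. The two delicate points are handled properly: the Fubini justification works because $(2p/(p+2))' > 2$ makes the bounded, $\bigO{|\zeta|^{-1}}$-decaying majorant dual-integrable against $f$, and the vanishing of an entire $L^p$ function via the areal mean value property is sound.
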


We will need the following generalized Liouville Theorem for quasi-analytic functions, due in this form to Brown and Uhlmann. Music \cite{Music:2013} has an extension of this lemma to include certain negatively weighted $L^p$ spaces.

\begin{theorem}[{\cite[Corollary 3.11]{BU:1997}}] 
\label{thm:vanish}
Suppose that $u \in L^p(\bbC) \cap L^2_{\mathrm{loc}}(\bbC)$ for some $p \in [1,\infty)$ and that
\[ \dbar u = a u + b \ubar \]
for $a$ and $b$ belonging to $L^2(\bbC)$. Then $u\equiv 0$.
\end{theorem}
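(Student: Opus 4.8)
The plan is to prove this through the \emph{similarity principle} for generalized analytic functions, reducing the statement to the classical Liouville theorem for entire functions. First I would collapse the two coefficients into one: define
\[ \eta(z) = \begin{cases} a(z) + b(z)\,\overline{u(z)}/u(z), & u(z) \neq 0,\\ a(z), & u(z) = 0, \end{cases}\]
so that $|\eta| \le |a| + |b|$ and hence $\eta \in L^2(\bbC)$. Since $u \in L^2_{\mathrm{loc}}$ has a weak $\dbar$-derivative whose gradient vanishes almost everywhere on the level set $\{u=0\}$, the hypothesis becomes the single-coefficient equation $\dbar u = \eta u$ in the distributional sense (after a standard elliptic bootstrap to give $u$ enough local regularity to manipulate).

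Next I would construct a solution $s$ of $\dbar s = \eta$ and set $f = e^{-s}u$. A formal computation gives $\dbar f = e^{-s}(\dbar u - u\,\dbar s) = 0$, so $f$ is entire. The aim is then to show $f$ lies in some $L^q(\bbC)$ with $q < \infty$: an entire function in $L^q(\bbC)$ vanishes identically, since the subharmonicity of $|f|^q$ gives $|f(z)|^q \le (\pi r^2)^{-1}\|f\|_q^q \to 0$ as $r \to \infty$. This forces $f \equiv 0$ and therefore $u = e^s f \equiv 0$.

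I expect the construction and control of $s$ to be the crux of the argument, precisely because $\eta$ lies only in $L^2$ --- the scale-critical space here, since the $L^2$-norm of the coefficient is invariant under the natural dilation $u \mapsto u(\lambda\,\cdot\,)$, so no rescaling can make it small. Consequently $s = P\eta$ is not bounded: from $\dbar s = \eta$ and $\dee s = S\eta$, with $S$ the Beurling transform bounded on $L^2$, one only obtains $\nabla s \in L^2(\bbC)$, i.e. $s \in \dot W^{1,2}(\bbC) \hookrightarrow \mathrm{BMO}$, with $s$ tending to $0$ at infinity in an averaged sense. To convert this into usable control on $e^{\pm s}$ I would split $\eta = \eta_1 + \eta_2$, where $\eta_1 \in L^\infty$ is compactly supported and $\eta_2$ collects the tail outside a large disc together with the region where $|\eta|$ is large, so that $\|\eta_2\|_{L^2}$ is as small as we wish. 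Then $P\eta_1$ is bounded and continuous by \eqref{eq:P-2} of Lemma \ref{lemma:P1} (as $\eta_1 \in L^{q_1}\cap L^{q_2}$ for some $q_1 < 2 < q_2$), while $\|P\eta_2\|_{\mathrm{BMO}} \lesssim \|\eta_2\|_{L^2}$ is small, so by John--Nirenberg $e^{\pm P\eta_2}$ is locally integrable to any prescribed finite power once that norm is small enough. Combining these bounds with $u \in L^p(\bbC)$ and the decay of $s$ at infinity yields $f = e^{-s}u \in L^q(\bbC)$ for a suitable finite $q$, and the Liouville step above finishes the proof. The $L^2$-endpoint regularity of the Cauchy transform, handled via the Beurling transform and John--Nirenberg (cf. the quasiconformal machinery of \cite{AIM:2009}), is the one genuinely delicate point; the rest is the standard similarity-principle reduction.
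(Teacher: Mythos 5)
Your proposal cannot be compared to an argument in the paper itself, because the paper does not prove this theorem: it imports it verbatim from Brown--Uhlmann \cite{BU:1997} (their Corollary 3.11). Measured against that source, your skeleton is the right one — collapse $a,b$ into a single coefficient $\eta\in L^2(\bbC)$ with $\dbar u=\eta u$, factor $u=e^{s}f$ with $\dbar s=\eta$ and $f$ entire, and observe that the $L^2$-criticality of $\eta$ forces $s$ to live only in $\dot W^{1,2}\cap\mathrm{BMO}$, to be handled by the splitting $\eta=\eta_1+\eta_2$ and John--Nirenberg. The gap is in your endgame, and it is genuine. John--Nirenberg controls only the \emph{oscillation} of $s$: for $\lambda$ small relative to $\|\eta_2\|_{L^2}^{-1}$ it gives $\int_{B(z_0,1)}e^{\lambda|s-s_{B(z_0,1)}|}\,dm\le C$ uniformly in $z_0$, but says nothing about the averages $s_{B(z_0,1)}$ themselves, which for a BMO function may grow like $\|s\|_{\mathrm{BMO}}\log|z_0|$. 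This growth really occurs for Cauchy transforms of $L^2$ functions: take $g(z)=(\log|z|)^{\alpha}$ for $|z|\ge2$, smoothly cut off inside, with $0<\alpha<1/2$; then $\dbar g\in L^2(\bbC)$, and any $s$ with $\dbar s=\dbar g$ and $\nabla s\in L^2$ equals $g$ plus a constant, hence is unbounded. So your claim that ``$s$ tends to $0$ at infinity in an averaged sense'' is false in general; the best uniform bound available is $\int_{B(z_0,1)}e^{\pm\lambda s}\,dm\lesssim(2+|z_0|)^{c\lambda\|\eta_2\|_2}$, a \emph{polynomially growing} local factor. Since $u\in L^p$ carries no rate of decay for $\|u\|_{L^p(B(z_0,1))}$, Hölder plus summation over unit balls cannot produce $f=e^{-s}u\in L^q(\bbC)$ for any finite $q$; that membership is exactly as strong as the theorem (it holds only because $f\equiv0$ in the end), so the step where you pass to global integrability of $f$ is where the proof breaks.

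The repair — and this is how the cited argument actually closes — is to forgo global integrability and use the growing local bounds directly. Subharmonicity of $|f|^{q}$ for small $q<p$, together with Hölder and the bound above, gives the pointwise estimate $|f(z)|\lesssim\|u\|_{L^p}(2+|z|)^{\delta}$ with $\delta=c\|\eta_2\|_2$ as small as desired; an entire function of growth $O(|z|^{\delta})$ with $\delta<1$ is constant, say $f\equiv c_0$. If $c_0\neq0$, Jensen's inequality yields $\int_{B(z_0,1)}|u|^p\,dm=|c_0|^p\int_{B(z_0,1)}e^{p\,\mathrm{Re}\,s}\,dm\ge\pi|c_0|^{p}e^{p(\mathrm{Re}\,s)_{B(z_0,1)}}\gtrsim|c_0|^p(2+|z_0|)^{-p\delta}$, and summing over a lattice of disjoint unit balls diverges once $p\delta<2$, contradicting $u\in L^p(\bbC)$; hence $c_0=0$ and $u\equiv0$. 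Two secondary points in your write-up also need care: for non-compactly-supported $\eta_2\in L^2$ the Cauchy integral does not converge absolutely and must be renormalized (e.g.\ replace the kernel $1/(z-\zeta)$ by $1/(z-\zeta)+1/\zeta$); and the identity $\dbar(e^{-s}u)=0$ cannot be reached by a chain rule after an ``elliptic bootstrap,'' because $\dbar u=\eta u$ lies only in $L^1_{\mathrm{loc}}$ and the Beurling transform is unbounded on $L^1$, so no bootstrap improves $u$ past $L^2_{\mathrm{loc}}$ — one instead approximates $\eta$ in $L^2$ by smooth compactly supported $\eta_n$ and passes to the limit in $\dbar(e^{-s_n}u)=e^{-s_n}(\eta-\eta_n)u$.
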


Finally, solutions of $\dbar u = f$ for rapidly decaying $f$ have a large-$z$ expansion.

\begin{lemma}
\label{lemma:P3}Suppose that $p\in\left(  2,\infty\right)  $, that $u\in
L^{p}({\bbC})$, 
that $f\in L^{2p/(p+2)}_n({\bbC})$, 
and that $\dbar u=f$. Then%
\[
z^{n}\left[  u(k)-\sum_{j=0}^{n-1}\frac{1}{z^{j+1}}\int\zeta^{j}%
f(\zeta)~dm(\zeta)\right]  \in L^{p}({\bbC}).
\]
\end{lemma}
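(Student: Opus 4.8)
The plan is to represent $u$ as $Pf$ and then expand the Cauchy kernel in inverse powers of $z$. Since $\dbar u = f$ holds in the distributional sense with $u\in L^{p}(\bbC)$ for $p\in(2,\infty)$ and $f \in L^{2p/(p+2)}_{n}(\bbC)\subset L^{2p/(p+2)}(\bbC)$, Lemma \ref{lemma:P2} identifies $u=Pf$, so that
\[ u(z)=\frac{1}{\pi}\int_{\bbC}\frac{f(\zeta)}{z-\zeta}\,dm(\zeta). \]
The algebraic identity
\[ \frac{1}{z-\zeta}=\sum_{j=0}^{n-1}\frac{\zeta^{j}}{z^{j+1}}+\frac{\zeta^{n}}{z^{n}\,(z-\zeta)} \]
is the engine of the proof. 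Inserting it into the formula for $u$ and separating the finite sum from the remainder would give
\[ z^{n}\left[u(z)-\sum_{j=0}^{n-1}\frac{1}{z^{j+1}}\left(\frac{1}{\pi}\int_{\bbC}\zeta^{j}f(\zeta)\,dm(\zeta)\right)\right]=\left[P\!\left(\zeta^{n}f\right)\right](z), \]
because multiplying the tail $\zeta^{n}/(z^{n}(z-\zeta))$ by $z^{n}$ and integrating against $f/\pi$ reproduces exactly $P$ applied to $\zeta^{n}f$ (the $\pi^{-1}$ is the normalization built into $P$).

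Before this manipulation is legitimate I must check that the coefficients $\int_{\bbC}\zeta^{j}f\,dm$ converge absolutely for $0\le j\le n-1$. Writing $|\zeta|^{j}|f|=\left(|\zeta|^{j}(1+|\zeta|^{2})^{-n/2}\right)\left((1+|\zeta|^{2})^{n/2}|f|\right)$ and applying H\"older's inequality with exponents $q=2p/(p+2)\in(1,2)$ and its conjugate $q'\in(2,\infty)$, finiteness follows: the second factor is in $L^{q}(\bbC)$ by the weight hypothesis, while the first factor decays like $|\zeta|^{j-n}$ at infinity with $j-n\le -1<-2/q'$ and is bounded near the origin, placing it in $L^{q'}(\bbC)$. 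This is the only genuinely delicate bookkeeping step; everything else is an exact geometric-series computation, and since the sum is finite its interchange with the integral is then automatic.

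It then remains to invoke the boundedness of $P$. The weight hypothesis $f\in L^{2p/(p+2)}_{n}(\bbC)$ means precisely $(1+|\cdot|^{2})^{n/2}f\in L^{2p/(p+2)}(\bbC)$, and since $|\zeta|^{n}\le(1+|\zeta|^{2})^{n/2}$ we obtain $\zeta^{n}f\in L^{2p/(p+2)}(\bbC)$. Lemma \ref{lemma:P1}(i) then yields $P(\zeta^{n}f)\in L^{p}(\bbC)$, which by the displayed identity is exactly the assertion that $z^{n}$ times the bracketed difference lies in $L^{p}(\bbC)$. I expect no serious obstacle here: the content of the lemma is the kernel expansion together with the single observation that the weight defining $L^{2p/(p+2)}_{n}$ is tailored so that $\zeta^{n}f$ lands back in the source space $L^{2p/(p+2)}$ on which $P$ maps boundedly into $L^{p}$.
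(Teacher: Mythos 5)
Your proof is correct. Note that the paper itself offers no proof of Lemma \ref{lemma:P3}: it is recalled without proof in the preliminaries, with pointers to \cite[section 2]{Perry:2013} and \cite[chapter 4]{AIM:2009}, and your argument --- identify $u=Pf$ by Lemma \ref{lemma:P2}, expand the Cauchy kernel by the finite geometric series $\frac{1}{z-\zeta}=\sum_{j=0}^{n-1}\zeta^{j}z^{-j-1}+\zeta^{n}z^{-n}(z-\zeta)^{-1}$, verify absolute convergence of the moments by H\"older against the weight, and bound the remainder $P(\zeta^{n}f)$ in $L^{p}$ by Lemma \ref{lemma:P1}(i) --- is precisely the standard argument such references give, so there is nothing to compare it against internally. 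One point your computation exposes: the expansion coefficients necessarily carry the factor $1/\pi$ built into the definition of $P$, whereas the lemma as printed omits it; with the coefficients $\int\zeta^{j}f\,dm$ as literally stated, the two expressions differ by a polynomial of degree $n-1$ in $z$, which lies in $L^{p}(\bbC)$ only if all those moments vanish. So the statement should be read with $\frac{1}{\pi}\int\zeta^{j}f(\zeta)\,dm(\zeta)$ as the coefficients --- a harmless typo in the paper --- and that corrected version is exactly what you proved.
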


We will also use the following estimates from \cite{BKS:2003} on the fundamental solution kernel for the linear problem
$ v_\tau = \dee_x^3 v + \dbar_x^3 v $
associated to the NV equation.

\begin{lemma}
\label{lemma:Itau}
Let
\begin{equation}
\label{Itau}
I_\tau(x) = \int e^{i\tau(k^3+\bar k^3)-i(kx+\bar k\bar x)} \, dm(k).
\end{equation}
Then:
\[I_\tau(x) = \tau^{-2/3}I_1\left(x\tau^{-1/3}\right)\]
and the estimates 
\begin{eqnarray*}
\left| I_1(x) \right| &\leq &C\left(1+|x|\right)^{-1/2},\\
\left| \nabla_x I_1(x) \right| &\leq & C
\end{eqnarray*}
hold.
\end{lemma}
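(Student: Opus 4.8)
The plan is to treat $I_\tau$ as an oscillatory integral with cubic phase and to extract both bounds from a stationary-phase analysis, after first reducing to $\tau = 1$. For the scaling identity I would substitute $k = \tau^{-1/3}\ell$ in \eqref{Itau}; since $k \in \mathbb{C} \cong \mathbb{R}^2$ the area element transforms as $dm(k) = \tau^{-2/3}\,dm(\ell)$, while $\tau(k^3 + \bar k^3) = \ell^3 + \bar\ell^3$ and $kx + \bar k\bar x = \tau^{-1/3}(\ell x + \bar\ell\bar x)$, so the integral becomes $\tau^{-2/3}\int e^{i(\ell^3+\bar\ell^3)-i\tau^{-1/3}(\ell x+\bar\ell\bar x)}\,dm(\ell) = \tau^{-2/3} I_1(\tau^{-1/3}x)$. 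This step is routine; all of the content lies in the two estimates on $I_1$.

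Writing $k = k_1 + ik_2$ and $f(k) = k^3 - kx$, the phase of $I_1$ is $\Phi(k) = 2\,\mathrm{Re}\,f(k)$, which is harmonic. Its critical points are the zeros of $f'(k) = 3k^2 - x$, namely $k_\pm = \pm(x/3)^{1/2}$ with $|k_\pm| = (|x|/3)^{1/2}$. Because $\mathrm{Re}\,f$ is the real part of a holomorphic function, its real Hessian is traceless with determinant $-|f''(k)|^2 = -36|k|^2$; hence each $k_\pm$ is a nondegenerate saddle and $|\det \mathrm{Hess}\,\Phi| = 48|x|$ there. The integral is only conditionally convergent, so I would first insert a regularizer $e^{-\epsilon|k|^2}$ (or a smooth cutoff), carry out the estimates uniformly in $\epsilon$, and let $\epsilon \downarrow 0$ at the end.

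For $|x| \ge 1$ I would split the $k$-integral into neighborhoods of $k_\pm$ and the complement. On the complement $|\nabla_k \Phi| = 2|3k^2 - x|$ is bounded below, and for $|k| \gtrsim |x|^{1/2}$ one has $|3k^2 - x| \gtrsim |k|^2$; repeated integration by parts against the vector field $|\nabla\Phi|^{-2}\,\nabla\Phi\cdot\nabla$ then makes the tail absolutely convergent and negligible, which also justifies removing the regularizer. Near each saddle, two-dimensional stationary phase gives a contribution of size $C|\det\mathrm{Hess}\,\Phi|^{-1/2} \sim C|x|^{-1/2}$, and summing the two saddles yields $|I_1(x)| \le C|x|^{-1/2}$. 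The gradient bound falls out of the same computation: differentiating under the integral gives $\nabla_x I_1 = \int (\text{amplitude linear in }k)\,e^{i\Phi}\,dm(k)$, so the amplitude at the saddle is of size $|k_\pm| \sim |x|^{1/2}$; multiplying by the stationary-phase factor $|x|^{-1/2}$ produces an $O(1)$ contribution, whence $|\nabla_x I_1| \le C$ for $|x| \ge 1$.

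The hard part will be the transition region $|x| \lesssim 1$, where the two saddles $k_\pm$ coalesce into the degenerate critical point $k = 0$ at $x = 0$ and ordinary stationary phase breaks down. Here I would instead argue by a van der Corput / Airy-type estimate: along rays the cubic phase $2\,\mathrm{Re}(k^3)$ has nonvanishing third derivative, so a third-order van der Corput bound (or the uniform Airy asymptotics interpolating between the coalescing saddles) gives boundedness of $I_1$ and of $\nabla_x I_1$ on $\{|x| \le 1\}$, with the vanishing of the amplitude at $k = 0$ keeping the gradient bounded. Matching this uniform estimate to the $|x| \ge 1$ regime produces the stated $(1+|x|)^{-1/2}$ decay and the uniform gradient bound; making the coalescence uniform, rather than simply appealing to the cited estimates of \cite{BKS:2003}, is the only genuinely delicate point.
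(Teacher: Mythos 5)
First, note that the paper itself offers no proof of this lemma at all: it simply cites \cite[Proposition 5.4]{BKS:2003}. So your attempt must be measured against the standard oscillatory-integral argument rather than against anything in the text. Your scaling step is correct, and your treatment of the regime $|x|\geq 1$ (nondegenerate saddles at $k_\pm=\pm(x/3)^{1/2}$, stationary phase giving $|x|^{-1/2}$, amplitude of size $|k_\pm|\sim |x|^{1/2}$ for the gradient, integration by parts where $|\nabla_k\Phi|=2|3k^2-x|\gtrsim |k|^2$) is the right argument; it can be made rigorous cleanly by rescaling $k=|x|^{1/2}w$, so that the phase becomes $|x|^{3/2}\cdot 2\,\mathrm{Re}(w^3-w\hat{x})$ with $\hat{x}=x/|x|$, whose two critical points and Hessians have geometry uniform in the direction $\hat{x}$.

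The genuine gap is in the region $|x|\lesssim 1$, which you single out as the hard part. Your key claim there --- that ``along rays the cubic phase $2\,\mathrm{Re}(k^3)$ has nonvanishing third derivative'' --- is false: writing $k=re^{i\theta}$, the cubic part of the phase is $2r^3\cos 3\theta$, which vanishes \emph{identically} on the six rays $\cos 3\theta=0$. A ray-wise third-order van der Corput bound therefore degenerates like $|\cos 3\theta|^{-2/3}$; for $I_1$ itself this singularity is still integrable in $\theta$, so that estimate is repairable, but for $\nabla_x I_1$ the extra factor $|k|$ in the amplitude makes the ray-wise bound degenerate like $|\cos 3\theta|^{-1}$, which is \emph{not} integrable in $\theta$, and your appeal to ``vanishing of the amplitude at $k=0$'' does not help, since the difficulty sits at large $|k|$ along the degenerate rays, not at the origin. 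So, as written, the gradient bound near $x=0$ does not close. Moreover, the Airy/coalescing-saddle machinery you invoke is unnecessary: only uniform upper bounds, not asymptotics, are needed for $|x|\leq 1$. Simply split at $|k|=1$. The critical points satisfy $|k_\pm|=(|x|/3)^{1/2}\leq 1/\sqrt{3}$, so on $|k|\geq 1$ one has $|\nabla_k\Phi|=2|3k^2-x|\geq 2(3|k|^2-1)\gtrsim |k|^2$, and repeated integration by parts (with the $\epsilon$-regularization you already introduced) controls that piece even with an amplitude $O(|k|)$; on $|k|\leq 1$ bound the integrand by $1$ (respectively $|k|$) and use finiteness of the area. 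This gives $|I_1(x)|\leq C$ and $|\nabla_x I_1(x)|\leq C$ uniformly on $|x|\leq 1$, which, matched with your $|x|\geq 1$ analysis, proves the lemma without any uniform treatment of the saddle coalescence.
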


For a proof see \cite[Proposition 5.4]{BKS:2003}.

\subsection{Properties of the scattering transform}
\label{sec.music}

The following is a reformulation of Theorem 1.4 and Lemma 5.2 from Music \cite{Music:2013}. Instead of stating the lemma in terms of the space that the potential $q$ lives, we state the assumptions that the scattering transform satisfies. This is the purpose of defining the space $\calX_{n,r}$ in Definition \ref{def:xnp}. See Remark \ref{rem:Xnp} for more details.

 For subcritical potentials, we let $\psi(x)$ be the unique positive solution of $-\dbar_x\dee_x \psi + q \psi = 0$ such that 
\begin{equation}
\psi(x) = a\log|x| + O(1).
\label{eq:a}
\end{equation}
We also set
\begin{equation}
c_\infty = \lim_{R\to\infty} \frac{1}{\pi R^2} \int_{|x|<R} \left( \psi(x)-a\log|x| \right) \,dm(x).
\label{eq:cinfty}
\end{equation}
Note that for critical potentials, we have $a=0$ and $c_\infty =1$, and the following results are originally due to Nachman \cite[Section 3]{Nachman:1996} in this case.
\begin{lemma}
\label{lemma.scattering}
Let $q\in L^p_\rho(\mathbb R^2)$ with $p\in(1,2)$, and $\rho>1$, and suppose that $q$ is either critical or subcritical. Let $\psi(x)$ denote a positive function solving $(-\dbar_x\partial_x+q)\psi=0$. Then $q(x)$ has no exceptional points, and the scattering transform $\bft(k)$ of $q(x)$ has the following properties:
\begin{enumerate}
\item[(i)] For small $k$ and some $\eps>0$, we have
\[\bft(k)= \frac{1}{2}\frac{\pi a}{c_\infty-a(\log|k|+\gamma)}+O(|k|^\eps),\]
where $c_\infty$, $a$ are defined in equations \eqref{eq:a} and \eqref{eq:cinfty}.
\item[(ii)] $\bft(k)/\kbar \in L^r(|k|>\epsilon)$ for $r \in (\tilde p',\infty)$ and every $\epsilon>0$. Moreoever, $\bft(k)/\kbar \in C(\mathbb C\setminus{0})$.
\item[(iii)] If $q\in W^{n,p}_\rho(\mathbb R^2)$ for $n\geq 1$ then $|k|^n{\bfts}(k) \in L^r(\mathbb C)$.
\end{enumerate}
\label{lem:props-of-transform}
\end{lemma}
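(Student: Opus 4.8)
The plan is to establish the three properties by first pinning down the behavior of the CGO solutions $\mu(x,k)$ as $|k| \darr 0$ and then transferring that information to $\bft(k)$ through the definition \eqref{t}. The absence of exceptional points is the foundational step: since $q$ is critical or subcritical, the operator $L = -\dbar_x\dee_x + q$ admits a positive (distributional) solution $\psi$ by \cite[Theorem 5.6]{Murata:1986}, and I would use this positivity to rule out a nontrivial $h \in W^{1,\tilde p}(\bbR^2)$ solving \eqref{mu.schro}. The mechanism is that such an $h$ would, after applying the generalized Liouville theorem (Theorem \ref{thm:vanish}) to the appropriate combination built from $h$ and $\psi$, be forced to vanish identically; the positive solution $\psi$ plays the role of a comparison function that controls the sign and decay. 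This guarantees $\mu(x,k)$ is well-defined and unique for all $k \in \bbC \setminus \{0\}$, so that $\bft(k)$ is unambiguously defined.

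For the small-$k$ asymptotics in (i), the key is the connection, emphasized in the introduction, between the $|x| \rarr \infty$ behavior of the positive solution $\psi(x) = a\log|x| + O(1)$ and the $|k| \darr 0$ behavior of $\bft(k)$. First I would show that $\mu(x,k) \rarr \psi(x)/c_\infty$ (up to the normalization encoded in $a$ and $c_\infty$) as $k \rarr 0$ in a suitable sense, by analyzing the integral equation for $\mu$ obtained from \eqref{mu.schro} via the solution operator $P$ of Lemma \ref{lemma:P2}. Substituting this limiting behavior into \eqref{t} and carefully tracking the logarithmic factor produced by the $a\log|x|$ tail against the oscillatory kernel $e_k(x)$ should produce the stated leading term $\tfrac{1}{2}\pi a / (c_\infty - a(\log|k| + \gamma))$, with $\gamma$ the Euler constant arising from the regularization of $\int e_k(x) \log|x|\, dm(x)$; the error $O(|k|^\eps)$ comes from the $O(1)$ remainder in \eqref{eq:a} together with the decay of $q$. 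In the critical case $a = 0$, $c_\infty = 1$, this reduces to the bound $|\bft(k)| \leq C|k|^\eps$ recorded for Nachman's setting.

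For (ii) and (iii), the strategy is to estimate $\bfts(k) = \bft(k)/(\pi\kbar)$ directly from \eqref{t}. Away from the origin, the continuity of $\bft(k)/\kbar$ on $\bbC \setminus \{0\}$ follows from continuity of $k \mapsto \mu(\cdot,k)$ in the relevant Sobolev norm, which in turn rests on continuous dependence of the solution of the integral equation \eqref{mu.schro} on the parameter $k$; the $L^r(|k| > \epsilon)$ membership for $r \in (\tilde p', \infty)$ follows by combining the small-$k$ bound from (i), which controls $\bft(k)/\kbar$ near zero, with large-$k$ decay of $\bft(k)$ obtained from integration by parts in \eqref{t} exploiting the oscillation of $e_k(x)$ and the decay/regularity of $q$. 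For (iii), each additional derivative of $q$ in $W^{n,p}_\rho$ yields, after integrating by parts $n$ times against the oscillatory factor $e_k(x)$, an extra power of $|k|$, so that $|k|^n \bfts(k)$ gains enough decay at infinity to lie in $L^r(\bbC)$ globally, while near $k = 0$ the factor $|k|^n$ with $n \geq 1$ tames the $|\kbar|^{-1}$ singularity in $\bfts$.

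The main obstacle I anticipate is (i): extracting the precise logarithmic form of the leading singularity requires controlling the $k \rarr 0$ limit of $\mu(x,k)$ uniformly enough in $x$ to justify interchanging the limit with the integral in \eqref{t}, despite the fact that $\mu$ approaches the \emph{unbounded} function $\psi$ whose $\log|x|$ growth interacts delicately with the kernel $e_k(x)$. The subcritical case is genuinely harder than the critical one precisely because $a \neq 0$ produces the $1/\log|k|$ singularity in $\bfts(k)$ flagged in the introduction, so a naive dominated-convergence argument fails and one must regularize the logarithmic tail carefully, matching the divergence as $|x| \rarr \infty$ against the behavior as $|k| \darr 0$. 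I would isolate this by splitting $\psi = a\log|x| + (\psi - a\log|x|)$, handling the explicit logarithm by a known Fourier-type computation that generates the $\gamma$ term and the $\log|k|$ denominator, and controlling the bounded remainder by the decay hypotheses on $q$.
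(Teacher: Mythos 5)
You should first note that the paper contains no proof of Lemma \ref{lem:props-of-transform} at all: it is stated as a reformulation of Music \cite[Theorem 1.4 and Lemma 5.2]{Music:2013}, with the critical case ($a=0$, $c_\infty=1$) due to Nachman \cite[Section 3]{Nachman:1996}. So your attempt has to be measured against those proofs, whose mechanism is visible elsewhere in this paper (Section \ref{sec:topo}): one works with the integral equation $\mu = 1 - g_k\ast(q\mu)$, proves invertibility of $[I+G_0\ast(q\,\cdot\,)]$ on the \emph{negatively weighted} spaces $W^{1,\tilde p}_{-\beta}(\mathbb R^2)$, $\beta>2/\tilde p$, and extracts both the absence of exceptional points and the asymptotics (i) from the small-$k$ expansion of Faddeev's Green's function $g_k$ relative to $G_0=-\frac{1}{2\pi}\log|\cdot|$ (this expansion is where the Euler constant $\gamma$ enters), rather than from a limit interchange inside \eqref{t}.

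Measured this way, your plan has genuine gaps. (a) Your small-$k$ ansatz $\mu(x,k)\to\psi(x)/c_\infty$ is provably wrong in the subcritical case: since $\frac14\Delta\psi=q\psi$ and $\psi=a\log|x|+O(1)$, the divergence theorem gives $\int q\psi\,dm=\pi a/2\neq 0$, so your limit would force $\bft(k)\to \pi a/(2c_\infty)\neq 0$, contradicting the logarithmic vanishing that (i) asserts (note $\bft(k)\sim -\pi/\log(|k|^2)\to 0$). The correct local behavior carries a $k$-dependent normalization, $\mu(x,k)\approx \psi(x)/\left(c_\infty-a(\log|k|+\gamma)\right)$, which is exactly the feature your "regularize the logarithmic tail" plan misses and exactly what produces the denominator in (i); this is why the dominated-convergence difficulty you flag cannot be repaired by splitting off $a\log|x|$ alone. (b) Your exceptional-points argument invokes Theorem \ref{thm:vanish}, but that theorem requires $u\in L^p(\bbC)\cap L^2_{\mathrm{loc}}$ and coefficients in $L^2(\bbC)$; the natural reduction through the positive solution (e.g.\ the conductivity $\sigma=\psi^2$) places the relevant function in a \emph{logarithmically weighted} space because $\psi$ is unbounded for subcritical $q$, so the unweighted Brown--Uhlmann theorem does not apply --- this is precisely why Music needed the weighted extension that the paper mentions immediately after Theorem \ref{thm:vanish}. (c) For (ii) you propose integration by parts against $e_k(x)$, but (ii) assumes only $q\in L^p_\rho$, with no derivatives available; the $L^r(|k|>\epsilon)$ membership comes instead from the Born-approximation structure $\bft\approx\hat q\in L^{p'}$ (Hausdorff--Young) plus H\"older with $1/\kbar\in L^s(|k|>\epsilon)$ for $s>2$, which yields exactly the range $r>\tilde p'$ since $1/\tilde p'=1/p'+1/2$. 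Integration by parts is the right tool only for (iii), where $q\in W^{n,p}_\rho$ is assumed.
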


In fact, we will use the above properties for scattering transforms coming from real potentials to define a class of scattering transforms.

\begin{definition}
\label{def:xnp}
The space $\calX_{n,r}$ for $n\geq 1$, $r\in (1,2)$, and $\epsilon>0$ is the closure of $C^\infty_c(\mathbb C)$ functions which satisfy the relation $\bar k f(k) = -k\overline{f(-k)}$ in the norm
\[\|f\|_{\calX_{n,r}} = \|f\|_{L^2(\mathbb C)}+\|(\,\cdot\,)^nf(\,\cdot\,)\|_{L^{r'+\epsilon}\cap L^r(\mathbb C)}+\|(\,\cdot\,)^nf(\,\cdot\,)\|_{L^{r}(\mathbb C)}.\]
\end{definition} 
\begin{remark}
\label{rem:Xnp}
The symmetry requirement guarantees that the reconstructed potential is real-valued; see Lemmas \ref{lem:symmetry} and \ref{lem:invsymmetry} of Section \ref{sec.symmetry}. The conditions in this definition are preserved under the linearized NV flow 
\eqref{t.flow}. By Lemma \ref{lem:props-of-transform} scattering transforms $\bfts(k)$ coming from potentials $q\in W^{n,p}_\rho(\mathbb R^2)$ are in $\calX_{n,r}$ for all $r\in(\tilde p',\infty)$ and $\epsilon>0$.

These properties of $\bfts(k)\in \calX_{n,r}$ are used in Music \cite{Music:2013} in the inverse problem to reconstruct $\mu(x,k)$ and obtain a large-$k$ expansion of $\mu(x,k)$. For the large-$k$ expansion to exist pointwise, we need the decay $k^n \bfts(k)\in L^{2+\epsilon}\cap L^{2-\epsilon}(\mathbb C)$ (see Lemmas 5.5 and 5.6 of \cite{Music:2013}). We need this decay for all $k$ away from the origin, and in this region it suffices to have $\bfts \in L^2(\bbC)$ instead of $L^{2+\epsilon}(\bbC)$. This is why we only need the homogeneous norm on $L^{r'+\epsilon}(\bbC)$. With this norm and the embedding $\calX_{n+1,r}\subset L^1_n$, we have $\mu(x,~\cdot~)-1\in L^{\tilde r}(\mathbb C)$ and for every $x$ we get the expansion \eqref{eq:largek-mu}.
\end{remark}

In what follows, we define the operator $T$  by
\begin{equation}
\label{eq:Txtau}
\left(T_{x,\tau} f\right)(k)
	= P_k \left[\bfs(\dotarg,\tau) e_{-x}(\dotarg)\overline{f} \right](k)
\end{equation}
where 
\[ \left[ P_k f\right](k) = \frac{1}{\pi} \int_{\bbC} \frac{1}{k-\zeta} f(\zeta) \, dm(\zeta).\]

We now recall from \cite{Music:2013} that the solution of \eqref{eq:dbar-k} is differentiable in the parameters $x$ and $\tau$.

\begin{lemma}
\label{lem:mu-diff}
If  $\bfs\in \calX_{n,r}$ then the unique solution $\mu(x,\, \cdot \, ,\tau)-1\in L^{\tilde{r}}(\mathbb C)$ of equation \eqref{eq:dbar-k} is $\alpha$  times differentiable in $x$ and $m$ times differentiable in $t$ for $n\geq 3m+|\alpha|$. Additionally, the derivatives of the map $(x,\tau)\to\mu(x,\, \cdot \,,\tau)-1\in L^{\tilde{r}}(\mathbb C)$ satisfy $\partial_\tau^m D^{\alpha}_{x}\mu(x,\, \cdot \,,\tau)\in L^{\tilde{r}}(\mathbb C)$. The derivatives are given by
\begin{equation}
\label{mu.deriv}
\partial_\tau^m D^{\alpha}_{x} \mu(x,k,\tau)=[I-T_{x,\tau}]^{-1}P_k
	f(x,\dotarg,\tau)
\end{equation}
where
\begin{equation}
\label{f}
	f(x,k,\tau)
			=\left[ \partial_\tau^m D^{\alpha}_{x},\bfts(k,\tau)e_{-x}(k) \right]
					\overline{\mu(x,k,\tau)},
\end{equation}
and $P_k[{\bfts}(k,\tau)e_{-x}(k)f(x,k,\tau)]  \in L^{\tilde{r}}(\mathbb C)$.
\end{lemma}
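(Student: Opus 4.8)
The plan is to recast \eqref{eq:dbar-k} as a fixed-point equation in $L^{\tilde r}(\bbC)$, establish invertibility of the associated operator uniformly in the parameters, and then produce the derivatives by differentiating the fixed-point equation and controlling difference quotients. First I would use Lemma \ref{lemma:P2} to rewrite \eqref{eq:dbar-k} in integral form: since $\dbar_k(\mu-1) = \bfts e_{-x}\overline\mu$ with $\mu(x,\dotarg,\tau)-1\in L^{\tilde r}(\bbC)$ and $\tilde r\in(2,\infty)$, applying $P_k$ gives $(I-T_{x,\tau})\mu = 1$, where $T_{x,\tau}$ is the real-linear operator \eqref{eq:Txtau}. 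The starting point is that $I-T_{x,\tau}$ is invertible on $L^{\tilde r}(\bbC)$: boundedness follows from Lemma \ref{lemma:P1}(iii) with $s=2$, giving $\|T_{x,\tau}f\|_{\tilde r}\lesssim\|\bfts\|_{2}\|f\|_{\tilde r}$ (controlled by the $L^2$ term of the $\calX_{n,r}$ norm, uniformly in $x,\tau$ since $|e_{-x}|=1$); compactness follows by approximating $\bfts$ in $L^2$ by compactly supported functions; and injectivity follows from the generalized Liouville theorem, Theorem \ref{thm:vanish}. I would also record that $(x,\tau)\mapsto T_{x,\tau}$ is continuous in operator norm, by dominated convergence applied to $\bfts(e_{-x}-e_{-x'})$ and to $\bfts(\dotarg,\tau)-\bfts(\dotarg,\tau')$ inside the estimate of Lemma \ref{lemma:P1}(iii), so that $(I-T_{x,\tau})^{-1}$ is bounded locally uniformly in $(x,\tau)$.

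Next I would differentiate the fixed-point equation. Writing $\mathcal D=\partial_\tau^m D_x^\alpha$ and applying $\mathcal D$ formally to $\mu = 1 + P_k[\bfts e_{-x}\overline\mu]$, the derivatives pass through $P_k$ (which is independent of $x,\tau$), and Leibniz' rule splits the result into the single term in which all derivatives land on $\overline\mu$ — which reassembles $T_{x,\tau}\mathcal D\mu$, using $\mathcal D\overline\mu=\overline{\mathcal D\mu}$ since $x,\tau$ are real — and the remaining terms, in which at least one derivative lands on the symbol $\bfts e_{-x}$; these constitute exactly $P_k f$ with $f$ as in \eqref{f}. This yields $(I-T_{x,\tau})\mathcal D\mu = P_k f$ and hence the claimed formula \eqref{mu.deriv}. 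The whole argument is organized as an induction on the total order $m+|\alpha|$: every term of $f$ carries a factor $\overline{\mathcal D'\mu}$ with $\mathcal D'$ of strictly lower order, which by the inductive hypothesis already lies in $1+L^{\tilde r}(\bbC)$.

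I would then justify the formal differentiation via difference quotients. For a first-order direction $s\in\{x_1,x_2,\tau\}$ one subtracts the fixed-point equations at $x+he_s$ and $x$ to get $(I-T_{x+he_s})\nu_h = P_k[h^{-1}\bfts(e_{-(x+he_s)}-e_{-x})\overline{\mu}]$ for the quotient $\nu_h$; letting $h\to 0$ and using the operator-norm continuity and local uniform invertibility from the first step gives convergence in $L^{\tilde r}$ to $(I-T_{x,\tau})^{-1}P_k[\partial_s(\bfts e_{-x})\overline\mu]$, and higher orders follow by iteration. The memberships $P_k f\in L^{\tilde r}$ and the auxiliary one at the end of the statement are where the hypothesis $n\geq 3m+|\alpha|$ enters: each $\tau$-derivative of $\bfts(k,\tau)=e^{i\tau(k^3+\kbar^3)}\bfts(k,0)$ produces a factor of size $|k|^3$ and each $x$-derivative of $e_{-x}(k)$ a factor of size $|k|$, so $f$ is a finite sum of terms dominated by $|k|^{3m+|\alpha|}|\bfts|\,|\mathcal D'\mu|$. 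Splitting into $|k|<1$ and $|k|>1$, the weight $|k|^{3m+|\alpha|}\bfts$ is controlled in $L^r$ and in $L^2$ by the $\calX_{n,r}$ norm: near the origin low powers of $|k|$ are harmless and one uses the unweighted $L^2$ control, while for $|k|>1$ one uses the $|k|^n$-weighted $L^r\cap L^{r'+\eps}$ control, valid since $3m+|\alpha|\leq n$. Applying Lemma \ref{lemma:P1}(i) to the contribution of the constant term (writing $\mu=1+(\mu-1)$, so that the argument is in $L^r$ and the image in $L^{\tilde r}$) and Lemma \ref{lemma:P1}(iii) with $s=2$ to the contributions of $\mu-1$ and of the lower derivatives then places $P_k f$ in $L^{\tilde r}$.

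The main obstacle, I expect, is the first step: establishing that $I-T_{x,\tau}$ is invertible with inverse bounded locally uniformly in $(x,\tau)$ and depending continuously on these parameters, since this is precisely what legitimizes both the closed-form expression \eqref{mu.deriv} and the difference-quotient limits. A secondary subtlety is that $\mu$ itself lies only in $1+L^{\tilde r}(\bbC)$ and not in $L^{\tilde r}(\bbC)$; the non-decaying constant forces the commutator terms acting on it to be treated separately, and it is exactly to handle these that both the $L^r$ and the $L^{r'+\eps}$ pieces of the $\calX_{n,r}$ norm — equivalently, the $L^r$ and $L^2$ bounds on the weighted symbol — are required.
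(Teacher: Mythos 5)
Your proposal is correct and is essentially the argument the paper relies on: the paper states this lemma without proof, citing Music \cite{Music:2013}, and the ingredients you assemble --- the fixed-point operator $T_{x,\tau}$, its invertibility on $L^{\tilde{r}}(\bbC)$ via \eqref{eq:P-4} with $s=2$ together with compactness, the Fredholm alternative and Theorem \ref{thm:vanish}, the commutator identity $(I-T_{x,\tau})\,\partial_\tau^m D_x^\alpha \mu = P_k f$ justified by difference quotients, and the weighted bounds on $|k|^{3m+|\alpha|}\bfts$ drawn from the $\calX_{n,r}$ norm --- are precisely those the paper itself deploys in Lemmas \ref{lemma:Txtau} and \ref{lem:mapcont} (cf.\ \eqref{TDerivs}). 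Your only departures are cosmetic: you obtain operator-norm continuity of $T_{x,\tau}$ in $(x,\tau)$ by dominated convergence inside \eqref{eq:P-4} rather than via the Banach-space adjoint and Rellich--Kondrakov, and you need only locally uniform resolvent bounds where the paper's Lemma \ref{lemma:Txtau} proves global ones.
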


Finally, it follows from Lemma \ref{lemma:P3} that $\mu(x,k,\tau)$ has a large-$k$ expansion. We refer the reader to \cite{Music:2013} for a full proof.

\begin{lemma}
If $\bfs\in \calX_{n,r}$ and if $\mu$ solves the equation \eqref{eq:dbar-k} with $\mu(x,\cdot)-1 \in L^{\tilde{r}}(\mathbb C)$, then $\mu$ admits the large-$k$ expansion
\begin{equation}
\mu(x,k,\tau)= 1+\sum_{j=1}^{n}\frac{a_j(x,\tau)}{k^{j}}+ o\left(|k|^{-n}\right)
\label{eq:largek-mu}
\end{equation}
for fixed $x$ and $\tau$.
Moreover, we may take $\alpha$ spatial derivatives and $m$ time derivatives with $n\geq |\alpha|+3m$ to get 
\[\partial_\tau^m D^{\alpha}_{x}(\mu(x,k,\tau)-1)=\sum_{j=1}^{n-|\alpha|-3m}\frac{\partial_\tau^m D^{\alpha}_{x} a_j(x,\tau)}{k^{j}}+o\left(|k|^{-n+|\alpha|+3m}\right).\]
\end{lemma}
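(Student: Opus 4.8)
The plan is to derive the large-$k$ expansion \eqref{eq:largek-mu} directly from Lemma \ref{lemma:P3} applied to the integral equation that $\mu$ satisfies. Since $\mu(x,\cdot)-1 \in L^{\tilde r}(\mathbb C)$ solves \eqref{eq:dbar-k}, we have $\dbar_k(\mu-1) = e_{-x}(k)\bfts(k,\tau)\overline{\mu(x,k,\tau)}$, so that $\mu-1 = P_k\left[e_{-x}(\dotarg)\bfts(\dotarg,\tau)\overline{\mu}\right]$ by Lemma \ref{lemma:P2}. Write $g(x,k,\tau) = e_{-x}(k)\bfts(k,\tau)\overline{\mu(x,k,\tau)}$ for the right-hand side. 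First I would check that $g$ lies in the weighted space $L^{2p/(p+2)}_n(\mathbb C)$ required to invoke Lemma \ref{lemma:P3} with the appropriate exponent (identifying $\tilde r$ with the $p$ of that lemma). This is where the membership $\bfts \in \calX_{n,r}$ enters: the defining norm of $\calX_{n,r}$ controls $|k|^n \bfts(k)$ in both an $L^r$ and an $L^{r'+\epsilon}$ sense, and $\overline\mu$ is bounded plus an $L^{\tilde r}$ remainder, so the product $|k|^n g$ has the integrability needed. The embedding $\calX_{n+1,r}\subset L^1_n$ noted in Remark \ref{rem:Xnp} is the relevant mechanism for guaranteeing that the moment integrals $\int \zeta^j g(x,\zeta,\tau)\,dm(\zeta)$ converge.

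Granting the weighted integrability, Lemma \ref{lemma:P3} gives immediately that
\[
k^n\left[(\mu-1)(x,k,\tau)-\sum_{j=0}^{n-1}\frac{1}{k^{j+1}}\int \zeta^j g(x,\zeta,\tau)\,dm(\zeta)\right]\in L^{\tilde r}(\mathbb C).
\]
Setting $a_j(x,\tau) = \int \zeta^{j-1} g(x,\zeta,\tau)\,dm(\zeta)$ for $1\le j\le n$ reindexes this as the claimed expansion with the $L^{\tilde r}$-bounded remainder; upgrading the $L^{\tilde r}$ remainder to the pointwise $o(|k|^{-n})$ statement for fixed $x$ requires a standard argument using continuity of the tail (the remainder is in $L^{\tilde r}$ and, being a $P_k$-image of a decaying function, is continuous and vanishes at infinity). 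I would record the coefficients $a_j$ explicitly in terms of the moments of $g$, since this identification is what makes the differentiated expansion transparent.

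For the derivative statement, the plan is to combine the differentiability result of Lemma \ref{lem:mu-diff} with the same moment computation. By that lemma, for $n\ge |\alpha|+3m$ the derivative $\partial_\tau^m D_x^\alpha \mu$ exists, lies in $L^{\tilde r}(\mathbb C)$, and satisfies $\partial_\tau^m D_x^\alpha\mu = [I-T_{x,\tau}]^{-1}P_k f$ with $f$ as in \eqref{f}. The point is that differentiating the $\dbar$-equation in $x$ and $\tau$ produces a source $f$ built from $e_{-x}$, $\bfts$, and $\overline\mu$ together with polynomial factors of $k$ coming from the commutator $[\partial_\tau^m D_x^\alpha, \bfts(k,\tau)e_{-x}(k)]$; each $x$-derivative brings down a factor of $k$ (or $\bar k$) from $e_{-x}(k)$, and each $\tau$-derivative brings down $k^3 + \bar k^3$ from the flow \eqref{t.flow}, so the total polynomial weight is of order $|\alpha| + 3m$. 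Thus $|k|^n$ times this source still has the moment integrability guaranteed by $\bfts\in\calX_{n,r}$, precisely under the constraint $n\ge |\alpha|+3m$, and applying Lemma \ref{lemma:P3} once more yields the differentiated expansion with remainder $o(|k|^{-n+|\alpha|+3m})$. That the leading coefficients are exactly $\partial_\tau^m D_x^\alpha a_j$ follows from differentiating under the integral sign in the moment formula for $a_j$, which is justified by the $L^{\tilde r}$ bounds on the differentiated $\mu$.

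The main obstacle I anticipate is bookkeeping the interplay between the weight $|k|^n$ needed for Lemma \ref{lemma:P3} and the mild singularity of $\bfts$ at $k=0$: the expansion lemma wants moments $\int \zeta^j g\,dm$ to converge, which demands both decay at infinity (from the $\calX_{n,r}$ norm) and integrability near the origin (where $\bfts$ behaves like $|k|^{\epsilon-1}$ or worse for subcritical potentials). Verifying that the low-$k$ behavior does not spoil the moment integrals — and that the polynomial factors from differentiation, which multiply $\bfts$ by powers of $k$, actually improve rather than degrade the situation near the origin — is the step that requires care. Everything else is a direct application of the cited lemmas once the weighted integrability of the source is established, so I would not belabor those routine estimates.
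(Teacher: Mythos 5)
Your proposal is correct and follows essentially the same route as the paper, which proves this lemma precisely by applying Lemma \ref{lemma:P3} to the integral equation $\mu-1=P_k\bigl[e_{-x}\bfts\,\overline{\mu}\bigr]$, using the $\calX_{n,r}$ norm (and the embedding $\calX_{n+1,r}\subset L^1_n$ noted in Remark \ref{rem:Xnp}) for the weighted integrability of the source, and Lemma \ref{lem:mu-diff} for the differentiated version, deferring the detailed estimates to Music \cite{Music:2013}. Your identification of the two-exponent integrability (straddling $2$) as the mechanism for upgrading the $L^{\tilde r}$ remainder to a pointwise $o(|k|^{-n})$ statement matches exactly the role the paper assigns to the $L^{r'+\epsilon}\cap L^r$ condition in the $\calX_{n,r}$ norm.
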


We will use the above lemma to take up to four spatial derivatives, or one time and one space derivative of $a_1(x,\tau)$. This requires $\bft(k)\in \calX_{5,r}$ and is why we assume that $q(x,0)\in W^{5,p}_\rho(\mathbb R^2)$. It is also important to notice that these are not weak derivatives as $-i\dbar^{-1}q = a_1(x,\tau)$ and $W^{5,p}_\rho(\mathbb R^2) \subset C^3(\mathbb R^2)$.

\section{Topology of the set of subcritical potentials}
\label{sec:topo}

The set of subcritical potentials is actually bigger than the set of critical potentials. Murata \cite[Theorem 2.4]{Murata:1986} shows that the set of critical potentials is not stable under compact perturbations. Here we go a step further and show the subcritical potentials are an open set in the topology of $L^{p}_\rho(\mathbb R^2)$ for certain $p$ and $\rho$. In the corollary to the theorem we show that subcritical potentials are open in the $W^{1,p}_\rho(\mathbb R^2)$ topology for the same $p$ and $\rho$ as in Theorem \ref{thm:nv}. 
\begin{theorem}
The set of subcritical potentials is  open in the $L^{p_1}_{\rho_1}(\mathbb R^2)$ topology for $p_1\in (1,2)$ and $\rho_1>1+2/\tilde p_1$ and the set of critical potentials is its boundary.
\end{theorem}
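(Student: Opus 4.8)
The plan is to reduce everything to the single assertion that the subcritical set $S$ is open; the identification of the boundary then follows by soft arguments. Write $L=-\dbar_x\dee_x+q$ and
\[ Q_q[\varphi]=\int_{\bbR^2}\left(|\dee_x\varphi|^2+q|\varphi|^2\right)\,dm(x). \]
I will use the standard criticality-theory characterization (Murata \cite{Murata:1986}, see also Simon \cite{Simon:1981}): a potential with $L\geq 0$ is subcritical if and only if there is a nonnegative $W\not\equiv 0$ with $Q_q[\varphi]\geq\int_{\bbR^2}W|\varphi|^2\,dm(x)$ for all $\varphi\in C_c^\infty(\bbR^2)$, and critical exactly when no such $W$ exists. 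For each fixed $\varphi$ the map $q\mapsto Q_q[\varphi]$ is Lipschitz on $L^{p_1}_{\rho_1}(\bbR^2)$, so the nonnegative set $N=\{q:Q_q\geq 0\}=\bigcap_\varphi\{q:Q_q[\varphi]\geq 0\}$ is closed. Granting that $S$ is open, the boundary is identified at once: since $S\subseteq N$ and $N$ is closed, $\overline S\subseteq N$, so $\partial S\subseteq N\setminus S$, which is precisely the critical set; conversely, given a critical $q$, choose $0\leq W_0\in C_c^\infty(\bbR^2)$ with $W_0\not\equiv 0$, so that $Q_{q+\eps W_0}\geq\eps\int W_0|\varphi|^2\,dm(x)$ exhibits $q+\eps W_0$ as subcritical for every $\eps>0$, while $q+\eps W_0\to q$ in $L^{p_1}_{\rho_1}(\bbR^2)$. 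Hence $\partial S$ is exactly the critical set.

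It remains to prove that $S$ is open, which is the heart of the matter. Fix a subcritical $q$ with witness $W>0$ and a perturbation $V=q'-q$ small in $L^{p_1}_{\rho_1}(\bbR^2)$; the goal is to produce a positive weight dominated by $Q_{q'}$. The perturbation term is controlled by a weighted Sobolev (Caffarelli--Kohn--Nirenberg) inequality: H\"older's inequality gives
\[ \left|\int_{\bbR^2} V|\varphi|^2\,dm(x)\right|\leq\norm{V}_{L^{p_1}_{\rho_1}}\bigl\|\,\langle x\rangle^{-\rho_1}|\varphi|^2\bigr\|_{L^{p_1'}}=\norm{V}_{L^{p_1}_{\rho_1}}\bigl\|\,\langle x\rangle^{-\rho_1/2}\varphi\bigr\|_{L^{2p_1'}}^2, \]
where $p_1'$ is the H\"older conjugate of $p_1$, and the hypothesis $\rho_1>1+2/\tilde p$ ensures (with room to spare) that multiplication by $\langle x\rangle^{-\rho_1/2}$ maps $\dot{H}^1(\bbR^2)$ boundedly into $L^{2p_1'}(\bbR^2)$, so that the right-hand side is controlled by the kinetic energy $\int|\dee_x\varphi|^2\,dm(x)$. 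Thus $V$ is form-small, and the naive hope is to absorb it into the lower bound $Q_q\geq\int W|\varphi|^2\,dm(x)$, retaining domination by a smaller positive weight and showing $q'$ subcritical.

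The hard part, and the reason this naive absorption does not close immediately, is the absence of a spectral gap at zero energy: the bound $Q_q\geq\int W|\varphi|^2\,dm(x)$ controls $\varphi$ only through the decaying weight $W$ and supplies no kinetic energy with which to absorb the kinetic-energy-bounded perturbation, while there is no mass term $\norm{\varphi}_{L^2}^2$ available to bridge the gap. Moreover the weight $\langle x\rangle^{-\rho_1}$ produced above does not match the natural witness: applying the ground-state (Doob) transform $\varphi=\psi u$ to the positive solution $\psi\sim a\log|x|$ of $L\psi=0$ (with $a>0$ in the subcritical case; see \eqref{eq:a} and Lemma \ref{lem:props-of-transform}) converts $Q_q$ into the degenerate Dirichlet form $\int\psi^2|\dee_x u|^2\,dm(x)$, whose optimal positive lower weight decays like $\langle x\rangle^{-2}(\log\langle x\rangle)^{-2}$, faster than $\langle x\rangle^{-\rho_1}$ when $\rho_1<2$. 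The route I would actually carry out is therefore to perturb the positive solution rather than the form: solve $L'\psi'=0$ by writing $\psi'=\psi-h$, where the corrector $h$ solves a $\dbar$-type equation with source $V\psi$, and estimate $h$ using the weighted mapping properties of the solid Cauchy transform $P$ from Lemmas \ref{lemma:P1} and \ref{lemma:P3}. One expects $h$ to be of lower order than $\log|x|$ precisely when $\rho_1>1+2/\tilde p$, so that $\psi'>0$ persists with logarithmic growth ($a'>0$) and $q'$ is again subcritical by the trichotomy. Making this corrector estimate rigorous — controlling the growth of $h$ against the $\log|x|$ growth of $\psi$ in the weighted $L^{p_1}_{\rho_1}$ setting — is the step I expect to require the most work, and it is exactly where the precise exponent hypothesis is essential.
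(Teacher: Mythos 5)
Your soft half is fine and is essentially the paper's own argument: granting openness of the subcritical set $S$, nonnegativity of the form \eqref{q.ps} survives $L^{p_1}_{\rho_1}$-limits, so $\partial S$ consists of critical potentials, while perturbing a critical potential by $\eps W_0$ with $0\leq W_0\in C_c^\infty(\bbR^2)$, $W_0\not\equiv 0$, produces subcritical approximants (the paper quotes Murata \cite[Theorem 2.4]{Murata:1986} here; your Hardy-weight characterization is a legitimate substitute). One aside: your claim that multiplication by $\langle x\rangle^{-\rho_1/2}$ maps $\dot H^1(\bbR^2)$ into $L^{2p_1'}$ is false in two dimensions --- no inequality $\int W|\varphi|^2\,dm \leq C\int|\dee_x\varphi|^2\,dm$ with $0\lneq W$ can hold, which is exactly the statement that the zero potential on $\bbR^2$ is critical (test on logarithmic cutoffs $u_R$ with $\norm{\nabla u_R}_{L^2}^2\sim(\log R)^{-1}$). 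So "$V$ is form-small" is not just insufficient, it is wrong; but since you abandon that branch, this is not the main issue.

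The genuine gap is that you never prove openness, which is the theorem. Your closing plan --- perturb the positive solution $\psi=a\log|x|+O(1)$ rather than the form --- is indeed the paper's strategy, but the steps you defer are its entire content, and your sketch misidentifies the equation to be solved. The corrector $h$ satisfies $(-\dbar_x\dee_x+q')h=V\psi$ with the \emph{perturbed} potential $q'$ on the left: this is a Schr\"odinger equation, not a $\dbar$-equation, so the mapping properties of the Cauchy transform $P$ in Lemmas \ref{lemma:P1} and \ref{lemma:P3} cannot by themselves produce $h$. What is actually needed is invertibility of $I+G_0\ast(v\,\cdot\,)$, $G_0=-\tfrac{1}{2\pi}\log|x|$, on the weighted space $W^{1,\tilde p}_{-\beta}(\bbR^2)$ with $\beta>2/\tilde p$: this is \cite[Lemma 3.5]{Nachman:1996} for critical $q$ and \cite[Theorem 1.4]{Music:2013} for subcritical $q$ (after a rescaling when $c_\infty=0$), and it extends to all $v$ with $\norm{v-q}_{L^{p_1}_{\rho_1}}$ small by the second resolvent formula, giving $\phi_v=[I+G_0\ast(v\,\cdot\,)]^{-1}c_\infty$ with $\norm{\phi_v-\phi_q}_{W^{1,\tilde p}_{-\beta}}\leq c\norm{v-q}_{L^{p_1}_{\rho_1}}$. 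Next, positivity and logarithmic growth of $\phi_v$ require a \emph{pointwise}, not weighted-norm, comparison; the paper gets
\[
|\phi_q-\phi_v|\leq c\norm{v\phi_v-q\phi_q}_{L^{p_2}_{\rho_2}}+\tfrac{1}{2\pi}\log(|x|+e)\norm{v\phi_v-q\phi_q}_{L^{1}},\qquad \rho_2=\rho_1-\beta>1,
\]
from Nachman's weighted estimates for $G_0\ast$, and it is precisely in arranging $\rho_2>1$ that the hypothesis $\rho_1>1+2/\tilde p$ enters; the right-hand side is then dominated by $\phi_q$ since $a>0$. Finally, subcriticality of $v$ is not automatic from positivity of $\phi_v$: one needs \cite[Theorem 2.12]{Schro-Ops} to place $v$ in the critical-or-subcritical class, and then Murata's dichotomy \cite[Theorem 5.6]{Murata:1986} (bounded positive solutions for critical potentials, logarithmically growing ones for subcritical) to exclude criticality. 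Your proposal names none of these inputs, and without them the key assertion $h=o(\log|x|)$ --- which you yourself flag as "the step requiring the most work" --- remains an unproven claim rather than a proof.
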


\begin{proof}
We prove that the set of subcritical potentials is open by finding positive logarithmically growing solutions, $\phi_v$, to $(-\Delta+v)\phi_v = 0$ for all $v$ close enough to a given subcritical potential $q$.

Let $G_0(x) = -1/(2\pi)\log|x|$. Then,
by Nachman \cite[Lemma 3.2]{Nachman:1996} we have for $f\in L^{p_2}_{\rho_2}(\mathbb R^2)$ with $\rho_2>1$ and $p_2\in(1,2)$
\[ \left\|G_0\ast f + \frac{1}{2\pi} \log|x|\int f \, dm(x)\right\|_{L^{\tilde p_2}}\leq c\|f\|_{L^{p_2}_{\rho_2}}\]
and
\[\|\nabla G_0 \ast f \|_{L^{\widetilde{p_2}}} \leq c\|f\|_{L^{p_2}}.\]
Modifying both of these slightly, we let 
\[Tf = G_0\ast f +\frac{1}{2\pi} \log(|x|+e) \int f dm(x) \]
 and get
\[ \|Tf\|_{W^{1,\tilde p_2}}\leq c\|f\|_{L^{p_2}_{\rho_2}}.\]
In particular, by Sobolev embedding we have 
\[\|Tf\|_{L^\infty} \leq \|f\|_{L^{p_2}_{\rho_2}}.\]

Let $\phi_q$ be a positive solution to the Schrodinger equation and let $c_\infty$ be given by \eqref{eq:cinfty}. By Nachman \cite[Lemma 3.5]{Nachman:1996} for critical potentials and Music \cite[Theorem 1.4]{Music:2013} for subcritical potentials with $c_\infty\neq 0$, we have that $[I+G_0\ast (q\,\cdot\,)]$ is invertible for critical/subcritical $q$ on $W^{1,\tilde p}_{-\beta}(\mathbb R^2)$ for $\beta>2/\tilde p$. If $c_\infty = 0$, we may use the same argument from Music  \cite[Theorem 1.4 and Lemma 4.1]{Music:2013} to scale $q$, and therefore $\phi_q$, so that $c_\infty \neq 0$ and we may reduce to the case. So for some $\epsilon>0$ the operator $[I+G_0\ast (v\,\cdot\,)]$ is invertible for all potentials $v$ satisfying $\|q-v\|_{L^p_\rho}<\epsilon$. Thus we have $W^{1,\tilde p}_{-\beta}$ distributional solutions to $(-\Delta+v)\phi_v = 0$ by taking $\phi_v = [I+G_0\ast (v\,\cdot\,)]^{-1}c_\infty$. Because $q$ is subcritical $\phi_q = a\log|x|+O(1)$ is a positive solution. The solutions $\phi_v$ satisfy the  equation
\[\phi_v = c_\infty - G_0\ast (v \phi_v).\] 

Taking the difference between these two solutions we find
\begin{eqnarray*}
\phi_q-\phi_{v} &=& G_0\ast (v\phi_v-q\phi_q) + \frac{1}{2\pi}\log(|x|+e)\int (v\phi_v-q\phi_q)dm(x)\\
&\qquad& -\frac{1}{2\pi}\log(|x|+e)\int (v\phi_v-q\phi_q)dm(x)\\
&=& T(v\phi_v-q\phi_q)-\frac{1}{2\pi}\log(|x|+e)\int (v\phi_v-q\phi_q)dm(x)
\end{eqnarray*}
which becomes
\[ |\phi_q-\phi_v| \leq c\|v\phi_v-q\phi_q\|_{L^{p_2}_{\rho_2}} +\frac{1}{2\pi}\log(|x|+e)\|v\phi_v-q\phi_q\|_{L^{1}}\]
for $p_2\in (1,2)$ and $\rho_2>1$. We may choose these constants by taking $\beta$ close enough to $2/\tilde p$ so that $\rho_2 = \rho_1-\beta>1$.
Also, we have the embedding $\|f\|_{L^1}\leq c\|f\|_{L^{p_2}_{\rho_2}}$ so
\[\phi_v\geq \phi_q-c\|v\phi_v-q\phi_q\|_{L^{p_2}_{\rho_2}} -\frac{1}{2\pi}\log(|x|+e)\|v\phi_v-q\phi_q\|_{L^{p_2}_{\rho_2}}.\]
The function $\phi_q$ is in $W^{1,\tilde p}_{-\beta}(\mathbb R^2)\subset C(\mathbb R^2)$ with logarithmic growth at infinity, so $\phi_q>c'>0$. With this strict positivity and growth of $\phi_q$, the right hand side is positive and logarithmically growing for $\|v\phi_v-q\phi_q\|_{L^{p_2}_{\rho_2}}$ small. This norm is small because $\|\phi_v-\phi_q\|_{W^{1,\tilde p}_{-\beta}}\leq c\|q-v\|_{L^p_\rho}$ by the second resolvent formula applied to the operators $[I+G_0\ast(v\,\cdot\,)]$. The function $\phi_v$ is a positive distributional solution to $(-\Delta+v)\phi_v=0$ so by \cite[Theorem 2.12]{Schro-Ops} $v$ is critical or subcritical.  It follows from Murata \cite[Theorem 5.6]{Murata:1986} that $v$ is subcritical because the positive solutions for critical potentials in this weighted space have the asymptotics $\phi= c+o(1)$ whereas the positive solutions for subcritical potentials obey $\phi= a\log|x|+O(1)$.

In Theorem 2.4 of the same paper, Murata showed that nonnegative compact perturbations of critical potentials are subcritical and nonpositive perturbations are supercritical. Thus critical potentials are a subset of the boundary of subcritical potentials. Looking at the form \eqref{q.ps} we see that limits of subcritical potentials must either be subcritical or critical. Therefore, critical potentials form the entire boundary. 
\end{proof}

Now we apply the above lemma to the set of potentials under consideration in Theorem \ref{thm:nv}.

\begin{corollary}
The set of subcritical potentials is open in the $W^{1,p}_\rho(\mathbb R^2)$ topology for $p\in (1,2)$ and $\rho>1$ and the set of critical potentials is its boundary.
\end{corollary}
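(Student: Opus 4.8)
The plan is to deduce the corollary from the preceding theorem by exhibiting a continuous inclusion $W^{1,p}_\rho(\bbR^2) \hookrightarrow L^{p_1}_{\rho_1}(\bbR^2)$ for some pair $(p_1,\rho_1)$ that is admissible in that theorem. The point is that subcriticality (and criticality) is an intrinsic spectral property of the potential, independent of the ambient function space, so the set of subcritical potentials inside $W^{1,p}_\rho$ is exactly the preimage, under this inclusion, of the set of subcritical potentials inside $L^{p_1}_{\rho_1}$. A continuous map pulls back open sets to open sets, so openness in the coarser $L^{p_1}_{\rho_1}$ topology forces openness in the finer $W^{1,p}_\rho$ topology. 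This reduces the whole statement to a parameter count together with one embedding estimate.

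To produce admissible parameters, I would exploit the freedom in the exponent. Writing $\tilde p_1$ for the Sobolev conjugate of $p_1$, so that $1/\tilde p_1 = 1/p_1 - 1/2$, note that as $p_1 \to 2^-$ we have $\tilde p_1 \to \infty$ and hence $1 + 2/\tilde p_1 \to 1$. Since $\rho > 1$, I can therefore fix $p_1 \in (p,2)$ close enough to $2$ that $1 + 2/\tilde p_1 < \rho$, and then choose $\rho_1 \in (1 + 2/\tilde p_1,\, \rho]$; these meet the hypotheses $p_1 \in (1,2)$ and $\rho_1 > 1 + 2/\tilde p_1$ of the theorem. For the embedding itself, I would first observe that multiplication by $\langle x\rangle^\rho = (1+|x|^2)^{\rho/2}$ maps $W^{1,p}_\rho$ boundedly into the unweighted $W^{1,p}(\bbR^2)$: the only new term in $\nabla(\langle x\rangle^\rho f)$, namely $\rho\, x\, \langle x\rangle^{\rho-2} f$, is dominated pointwise by $\rho\langle x\rangle^{\rho-1}|f| \le \rho\langle x\rangle^{\rho}|f| \in L^p$. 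The ordinary Sobolev embedding $W^{1,p}(\bbR^2)\hookrightarrow L^{\tilde p}(\bbR^2)$ then gives $W^{1,p}_\rho \hookrightarrow L^{\tilde p}_\rho$, and interpolating with the trivial inclusion $W^{1,p}_\rho\hookrightarrow L^p_\rho$ yields $W^{1,p}_\rho \hookrightarrow L^{p_1}_\rho$ for $p_1\in[p,\tilde p]$. Finally, since $\rho_1 \le \rho$ we have $L^{p_1}_\rho \subset L^{p_1}_{\rho_1}$, producing the desired continuous inclusion and hence openness.

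For the boundary statement, since the subcritical set is now open in $W^{1,p}_\rho$, its boundary is its closure minus itself, so $q$ lies on the boundary exactly when it is a $W^{1,p}_\rho$-limit of subcritical potentials without being subcritical. If $q$ is critical, Murata \cite[Theorem 2.4]{Murata:1986} supplies nonnegative, smooth, compactly supported perturbations $\phi$ with $q + \eps\phi$ subcritical; since $\eps\phi \to 0$ in $W^{1,p}_\rho$, such $q$ is a boundary point. Conversely, a boundary point $q$ is a $W^{1,p}_\rho$-limit, hence by the inclusion above an $L^{p_1}_{\rho_1}$-limit, of subcritical potentials, so by the theorem $q$ is critical or subcritical; being non-interior it cannot be subcritical, and is therefore critical. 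The only step demanding genuine care is the parameter bookkeeping of the second paragraph — verifying that $1 + 2/\tilde p_1$ can be pushed below an arbitrary $\rho > 1$ while keeping $p_1 \in (p,\tilde p)$ so that the interpolation is legitimate — everything else being soft topology and direct appeals to the preceding theorem and to Murata.
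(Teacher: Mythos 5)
Your proof is correct and takes essentially the same route as the paper: embed $W^{1,p}_\rho(\mathbb{R}^2)$ into a weighted space $L^{s}_{\rho}(\mathbb{R}^2)$ by Sobolev embedding, with $s<2$ chosen close enough to $2$ that $\rho > 1 + 2/\tilde{s}$, and then invoke the preceding theorem. Your extra details --- the preimage-of-an-open-set argument for the inclusion and the explicit two-sided boundary argument (Murata's nonnegative compactly supported perturbations in one direction, passing the $W^{1,p}_\rho$-limit through the embedding in the other) --- simply flesh out what the paper compresses into ``the result then follows from the previous theorem.''
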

\begin{proof}
Be Sobolev embedding $W^{1,p}_\rho(\mathbb R^2) \subset L^s_\rho(\mathbb R^2)$ for all $s\in (p,\tilde p)$. In particular we may choose $s<2$ but close enough to $2$ so that $\rho>1+2/\tilde s$. The result then follows from the previous theorem.
\end{proof}

\section{Continuity of Maps}
\label{sec.continuous}

In this section, we prove that the scattering data, $\bft$, depends continuously on the potential $q$ and that the solution of \eqref{eq:dbar-k} is continuous in $\bfts(k)$ in a suitable sense.  Using the continuity of the inverse scattering map defined by the $\dbar$-problem, we can show that our reconstructed $q(x,\tau)$ is differentiable. In fact by approximating the scattering transform $\bfs(k)$ by  $\bfs_\epsilon(k)\in C^{\infty}_c(\mathbb C)$ in the $\calX_{n,r}$ norm, we approximate the solution $q(x,\tau)$ by the reconstructed $q_\epsilon(x,\tau)$. In this way, the results showing that each $q_\epsilon(x,\tau)$ solves the Novikov-Veselov equation carry over to the general subcritical case so long as the initial data $q_0$ has enough derivatives.

\subsection{Continuous dependence of $\bft$ on $q$}
Recall from Nachman \cite[\S 1]{Nachman:1996}  that if $p \in (1,2)$, $u \in W^{1,\tilde{p}}(\bbR^2)$, $f \in L^p(\bbR^2)$, $k \in \bbC \setminus \{ 0 \}$, and
$\dbar_x(\dee_x+ik)u=f$ in distribution sense, then
$$ u = g_k*f $$
for a convolution kernel $g_k$ which satisfies the estimate

$$ \norm{g_k*f}_{W^{1,\tilde{p}}} \leq c_0(k,p) \norm{f}_p.$$

Let us define
$$ S_k(q) f  = g_k*(qf). $$
It follows from the estimates above that for $q \in L^p(\bbR^2)$, 
\begin{equation}
\label{Sk.norm}
\norm{S_k(q)}_{\calB(W^{1,\tilde{p}})} \leq c_0(k,p) \norm{q}_p
\end{equation}
while $S_k(q) 1 := g_k*q$ satisfies
\begin{equation}
\label{Sk.1.norm}
\norm{S_k(q) 1}_{W^{1,\tilde p}} \leq c(k,p) \norm{q}_p.
\end{equation}
In \eqref{Sk.norm} we used the Sobolev inequality $\norm{f}_{\infty} \leq c_p \norm{f}_{W^{1,\tilde{p}}}$.

We now consider the continuity of 
$ \mu(x,k;q)$ defined by
$$ \mu(x,k;q) -1 = (I-S_k(q))^{-1} S_k(q) 1$$
as a function of $q$. 

\begin{lemma}
\label{lemma:mu.q}
Fix $p \in (1,2)$, $k \in \bbC \setminus \{ 0 \}$, and $q_0 \in L^p(\bbR^2)$. 
Suppose that
 $$R(q_0):=(I-S_k(q_0))^{-1}$$
exists as a bounded operator from $W^{1,\tilde{p}}(\bbR^2)$ to itself. 
There is a number $r>0$ and a constant $c(p,k,q_0)$ so that for all $q \in L^p(\bbR^2)$ with $\norm{q_0-q}_p \leq r$, the estimate
$$
\norm{\mu(\dotarg,k,q) - \mu(\dotarg,k,q_0)}_{W^{1,\tilde{p}}} \\
	\leq c(p,k,q_0)\norm{q-q_0}_p
$$
holds.
\end{lemma}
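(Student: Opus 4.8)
The plan is to treat this as a standard perturbation estimate built on the second resolvent identity, exploiting the fact that $S_k(q) = g_k*(q\,\dotarg\,)$ depends \emph{linearly} on $q$. This linearity immediately gives $S_k(q) - S_k(q_0) = S_k(q - q_0)$, so that \eqref{Sk.norm} controls the operator-norm difference
\[ \norm{S_k(q) - S_k(q_0)}_{\calB(W^{1,\tilde{p}})} \leq c_0(k,p) \norm{q - q_0}_p, \]
while \eqref{Sk.1.norm} controls the source-term difference
\[ \norm{S_k(q) 1 - S_k(q_0) 1}_{W^{1,\tilde{p}}} = \norm{S_k(q - q_0) 1}_{W^{1,\tilde{p}}} \leq c(k,p) \norm{q - q_0}_p. \]

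First I would show that $R(q) = (I - S_k(q))^{-1}$ exists for $q$ near $q_0$, so that $\mu(\dotarg,k,q)$ is even well-defined. Factoring
\[ I - S_k(q) = (I - S_k(q_0)) \bigl[ I - R(q_0)(S_k(q) - S_k(q_0)) \bigr], \]
the bracketed factor is invertible by Neumann series as soon as $\norm{R(q_0)}_{\calB(W^{1,\tilde{p}})}\, c_0(k,p)\, \norm{q - q_0}_p < 1$. Choosing $r = \bigl(2 \norm{R(q_0)}_{\calB(W^{1,\tilde{p}})} c_0(k,p)\bigr)^{-1}$ then guarantees invertibility together with the uniform bound $\norm{R(q)}_{\calB(W^{1,\tilde{p}})} \leq 2\norm{R(q_0)}_{\calB(W^{1,\tilde{p}})}$ for all $q$ with $\norm{q - q_0}_p \leq r$.

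Next I would invoke the second resolvent identity
\[ R(q) - R(q_0) = R(q)\,(S_k(q) - S_k(q_0))\,R(q_0) = R(q)\,S_k(q - q_0)\,R(q_0), \]
which, combined with the uniform bound above, yields $\norm{R(q) - R(q_0)}_{\calB(W^{1,\tilde{p}})} \leq 2\norm{R(q_0)}_{\calB(W^{1,\tilde{p}})}^2\, c_0(k,p)\, \norm{q - q_0}_p$. Finally I would split
\[ \mu(\dotarg,k,q) - \mu(\dotarg,k,q_0) = \bigl[ R(q) - R(q_0) \bigr] S_k(q) 1 + R(q_0) \bigl[ S_k(q) 1 - S_k(q_0) 1 \bigr] \]
and estimate each piece: the first using the resolvent-difference bound together with $\norm{S_k(q) 1}_{W^{1,\tilde{p}}} \leq c(k,p)(\norm{q_0}_p + r)$, and the second using the source-difference bound. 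Collecting constants gives the claimed Lipschitz estimate with $c(p,k,q_0)$ depending only on $\norm{R(q_0)}_{\calB(W^{1,\tilde{p}})}$, $\norm{q_0}_p$, and the kernel constants $c_0(k,p)$, $c(k,p)$.

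I do not expect a serious obstacle here. The only points requiring care are verifying convergence of the Neumann series (which fixes the admissible radius $r$ and the uniform bound on $\norm{R(q)}$) and bookkeeping the constants so that they depend only on the advertised quantities. The essential structural input is the linearity of $S_k$ in $q$: it ensures that both the operator difference and the source difference are again of the form $S_k(q - q_0)$, which is precisely what makes the perturbation estimate close cleanly.
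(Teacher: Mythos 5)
Your proposal is correct and follows essentially the same route as the paper's proof: both exploit the linearity of $S_k$ in $q$, invert $I-S_k(q)$ via the second resolvent formula/Neumann series with the same choice $r = \bigl(2c\norm{R(q_0)}_{\calB(W^{1,\tilde p})}\bigr)^{-1}$ and uniform bound $\norm{R(q)} \leq 2\norm{R(q_0)}$, and then estimate $\mu(\dotarg,k,q)-\mu(\dotarg,k,q_0) = R(q)S_k(q)1 - R(q_0)S_k(q_0)1$ by the same splitting into a resolvent-difference term and a source-difference term. Your write-up merely makes explicit the bookkeeping that the paper leaves implicit.
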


\begin{proof}
In what follows $c$ denotes a constant depending only on $k$, $p$, and $q_0$ whose value may vary from line to line. We will use the estimates
\begin{eqnarray}
\label{Sk.delta}
\norm{S_k(q)-S_k(q_0)}_{\calB(W^{1,\tilde{p}})} 
	&\leq& 	c \norm{q-q_0}_p\\
\label{Sk.1.delta}
\norm{S_k(q) 1 - S_k(q_0) 1}_{W^{1,\tilde{p}}}
	&\leq& c \norm{q-q_0}_p
\end{eqnarray}
which follow from \eqref{Sk.norm}, \eqref{Sk.1.norm}, and linearity in $q$.

By the second resolvent formula, for $q$ sufficiently close to $q_0$, we have
$$ 
R(q) - R(q_0) = R(q_0) \left( S_k(q)-S_k(q_0) \right) R(q) 
$$
so that
$$ 
R(q) = 		\left[
						I-R(q_0)\left(S_k(q)-S_k(q_0)\right)
				\right]^{-1} 
				R(q_0) 
$$
These computations can be justified if 
$ \norm{R(q_0)\left[S_k(q)-S_k(q_0)\right]}_{\calB(W^{1,\tilde{p}})} <1/2$. From estimate \eqref{Sk.delta}, we see that it suffices to take
$$ \norm{q-q_0}_p \leq \left(2 c \norm{R(q_0)}_{\calB(W^{1,\tilde p})} \right)^{-1}. $$
Thus, taking $r = \left(2 c \norm{R(q_0)}_{\calB(W^{1,\tilde p})} \right)^{-1}$, we have 
\begin{equation}
\label{res.nbhd.bd}
\norm{R(q)}_{\calB(W^{1,\tilde{p}})} \leq 2 \norm{R(q_0)}_{\calB(W^{1,\tilde p})}.
\end{equation}
 Using the identity
$$ 
\mu(x,k,q) - \mu(x,k,q_0) = 
		R(q) S_k(q) 1 - 
		R(q_0) S_k(q_0) 1,
$$
the estimates \eqref{Sk.delta}, \eqref{Sk.1.delta}, \eqref{res.nbhd.bd}, and the second resolvent formula, we recover the claimed estimate.
\end{proof}

Now we can prove continuity of $\bft$ as a function of $q$. 

\begin{lemma}
\label{lemma:t.cont}
Suppose that $p \in (1,2)$, $\rho>1$, that $q \in L^p_\rho(\bbR^2)$, and $\{ q_n \}$ is a sequence from 
$L^p_\rho(\bbR^2)$ with $q_n \rarr q$ in $L^p_\rho(\bbR^2)$. Finally, let $\bft_n = \calT (q_n)$ and $\bft = \calT(q)$. Then, for all non-exceptional nonzero $k$, $\bft_n(k) \rarr \bft(k)$ pointwise.
\end{lemma}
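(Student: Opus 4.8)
The plan is to estimate the difference $\bft_n(k) - \bft(k)$ directly from the defining formula \eqref{t} and to reduce everything to the continuity of $\mu$ in $q$ already established in Lemma \ref{lemma:mu.q}. First I would record two standing facts. Since $kx + \kbar\xbar = 2\,\mathrm{Re}(kx)$ is real, the kernel satisfies $|e_k(x)| = 1$ for every $x$ and every nonzero $k$. Second, because $p \in (1,2)$ gives $p' \in (2,\infty)$ and hence $\rho > 1 > 2/p'$, the weighted space embeds continuously into $L^1$: one has $\int (1+|x|^2)^{-\rho p'/2}\,dm(x) < \infty$, so $L^p_\rho(\bbR^2) \hookrightarrow L^1(\bbR^2)$. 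In particular $q \in L^1(\bbR^2)$ and $\|q_n - q\|_{L^1} \le c\|q_n - q\|_{L^p_\rho} \to 0$.

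Next I would set up the inputs from $\mu$. Since $k$ is non-exceptional for $q$, the homogeneous equation \eqref{mu.schro} has only the trivial $W^{1,\tilde p}$ solution, and by the Fredholm alternative this injectivity upgrades to invertibility, so $R(q) = (I - S_k(q))^{-1}$ exists as a bounded operator on $W^{1,\tilde p}(\bbR^2)$. Because the weight is at least $1$, convergence $q_n \to q$ in $L^p_\rho$ forces $q_n \to q$ in $L^p$, so for $n$ large we have $\|q_n - q\|_p \le r$ and Lemma \ref{lemma:mu.q} applies with $q_0 = q$, yielding
\[ \|\mu(\dotarg, k, q_n) - \mu(\dotarg, k, q)\|_{W^{1,\tilde p}} \le c(p,k,q)\,\|q_n - q\|_p \to 0. \]
By the Sobolev inequality $\|f\|_\infty \le c_p\|f\|_{W^{1,\tilde p}}$ this gives $\|\mu_n - \mu\|_\infty \to 0$, and it also shows $\|\mu_n\|_\infty \le 1 + c_p\|\mu_n - 1\|_{W^{1,\tilde p}}$ is bounded uniformly in $n$, say by $M$, where I abbreviate $\mu_n = \mu(\dotarg, k, q_n)$ and $\mu = \mu(\dotarg, k, q)$.

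With these in hand I would split the difference as
\[ \bft_n(k) - \bft(k) = \int_{\bbR^2} e_k(x)\,(q_n - q)(x)\,\mu_n(x,k)\,dm(x) + \int_{\bbR^2} e_k(x)\,q(x)\,(\mu_n - \mu)(x,k)\,dm(x). \]
Using $|e_k| = 1$, the first integral is bounded by $\|\mu_n\|_\infty\,\|q_n - q\|_{L^1} \le M\,\|q_n - q\|_{L^1} \to 0$, and the second by $\|\mu_n - \mu\|_\infty\,\|q\|_{L^1} \to 0$. Hence $\bft_n(k) \to \bft(k)$ for every non-exceptional nonzero $k$.

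The one genuinely delicate point—and the step I would flag—is that the CGO solutions $\mu$ do not decay at infinity (they tend to $1$), so one cannot pair $q$ against $\mu_n - \mu$ by a Hölder estimate using $\mu_n - \mu \in L^{p'}$. The resolution is to absorb all spatial decay into the potential: the weight $\rho > 1$ places $q$ and $q_n - q$ in $L^1$, while $\mu_n$ and $\mu_n - \mu$ are controlled only in $L^\infty$. It is precisely this $L^1$--$L^\infty$ pairing, rather than an $L^p$--$L^{p'}$ Hölder pairing, that makes both error terms vanish, and it is where the hypothesis $\rho > 1$ is used.
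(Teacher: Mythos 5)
Your proof is correct and follows essentially the same route as the paper's: the identical splitting of $\bft_n(k)-\bft(k)$ into the two integrals $\int e_k(q_n-q)\mu_n$ and $\int e_k q(\mu_n-\mu)$, estimated by the $L^1$--$L^\infty$ pairing via Lemma \ref{lemma:mu.q} together with the embeddings $L^p_\rho(\bbR^2)\hookrightarrow L^1(\bbR^2)$ and $W^{1,\tilde p}(\bbR^2)\hookrightarrow L^\infty(\bbR^2)$. The only difference is that you spell out details the paper leaves implicit (the Fredholm-alternative justification that $R(q)$ exists at a non-exceptional $k$, and the role of $\rho>1$), which is fine.
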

\begin{proof}
Using Lemma \ref{lemma:mu.q} we estimate
\begin{eqnarray*}
|\bft_n(k) - \bft(k)| &\leq& \left|\int e_k(x)(q_n(x)-q(x))\mu_n(x)\,dm(x)\right|\\
&&\quad +\left|\int e_k(x)q(x))(\mu_n(x)-\mu(x))\,dm(x)\right|\\
&&\leq \|q_n-q\|_{L^1} \|\mu_n\|_{L^\infty} +\|q\|_{L^1}\|\mu_n-\mu\|_{L^\infty}\\[0.2cm]
&&\leq \|q_n-q\|_{L^p_\rho} \left(\|\mu_n-1\|_{W^{1,\tilde p}}+1\right) +\|q\|_{L^p_\rho}\|\mu_n-\mu\|_{W^{1,\tilde p}}
\end{eqnarray*}
and conclude that $\bft_n(k) \rarr \bft(k)$. 
\end{proof}

\subsection{Continuous dependence of reconstructed $q$ on $\bft$}

For notational convenience,  we combine the time dependence of the scattering data 
$$\bfts(k,\tau)=\exp(i\tau(k^3+\bar k^3))\bfts(k,0)$$ with the spatial oscillation $e_{-x}(k)$ into the factor $e^{i\tau S}$ where
\[S(x,k,\tau) = -\frac{kx+\bar k \bar x}{\tau}+(k^3+\bar k^3).\]

We note the following estimates. 
\begin{lemma}
\label{lemma:Txtau}
Suppose that $\bfs \in L^2(\bbC)$ and $\tilde{r}>2$. Then $(I-T_{x,\tau})^{-1}$ exists as an operator in $\calB(L^{\tilde{r}})$ and
$$ 
\sup_{(x,\tau) \in \bbR^2 \times \bbR} \norm{(I-T_{x,\tau})^{-1}}_{\calB(L^{\tilde{r}})}
< \infty.
$$
\end{lemma}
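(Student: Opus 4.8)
The plan is to exploit three features of $T_{x,\tau}$: it is bounded on $L^{\tilde r}$ uniformly in the parameters, it is \emph{antilinear}, and its dependence on $(x,\tau)$ enters only through the unimodular oscillatory factor $e^{i\tau S}$.

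First I would record the uniform bound. Since $\bfts(k,\tau)=\exp(i\tau(k^3+\kbar^3))\bfts(k,0)$ and $|e_{-x}(k)|=1$, the multiplier $\bfts(\cdot,\tau)e_{-x}$ has $L^2$ norm equal to $\norm{\bfts(\cdot,0)}_2$ for every $(x,\tau)$. Applying \eqref{eq:P-4} with $p=q=\tilde r$ and the admissible choice $s=2$ (since $\tilde r^{-1}+1/2=\tilde r^{-1}+2^{-1}$) and using $\norm{\overline f}_{\tilde r}=\norm{f}_{\tilde r}$ gives
\[ \norm{T_{x,\tau}f}_{\tilde r}=\norm{P\left(\bfts(\cdot,\tau)e_{-x}\overline f\right)}_{\tilde r}\leq C_{\tilde r}\,\norm{\bfts(\cdot,0)}_2\,\norm{f}_{\tilde r}, \]
so $M:=\sup_{x,\tau}\norm{T_{x,\tau}}_{\calB(L^{\tilde r})}<\infty$. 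Because $T_{x,\tau}$ is conjugate-linear, $T_{x,\tau}^2$ is complex-linear and one has the factorizations $I-T_{x,\tau}^2=(I+T_{x,\tau})(I-T_{x,\tau})=(I-T_{x,\tau})(I+T_{x,\tau})$. Consequently, invertibility of the linear operator $I-T_{x,\tau}^2$ forces invertibility of $I-T_{x,\tau}$, with $(I-T_{x,\tau})^{-1}=(I+T_{x,\tau})(I-T_{x,\tau}^2)^{-1}$ and $\norm{(I-T_{x,\tau})^{-1}}\leq(1+M)\norm{(I-T_{x,\tau}^2)^{-1}}$. It therefore suffices to bound $\norm{(I-T_{x,\tau}^2)^{-1}}_{\calB(L^{\tilde r})}$ uniformly.

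Next I would establish fiberwise invertibility of $I-T_{x,\tau}^2$. For injectivity, note that $f=\pm T_{x,\tau}f$ with $f\in L^{\tilde r}$ means $f=P(b\overline f)$ with $b=\pm\,\bfts(\cdot,\tau)e_{-x}\in L^2$, hence $\dbar_k f=b\overline f$ by Lemma \ref{lemma:P2}; since $\tilde r>2$ gives $f\in L^{\tilde r}\cap L^2_{\mathrm{loc}}$, the generalized Liouville Theorem \ref{thm:vanish} (with $a=0$) yields $f\equiv 0$. Thus both $I\pm T_{x,\tau}$, and hence $I-T_{x,\tau}^2$, are injective for every $(x,\tau)$. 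For the Fredholm property I would show $T_{x,\tau}^2$ is compact on $L^{\tilde r}$: approximating $\bfts(\cdot,0)$ in $L^2$ by $\bfts_\epsilon\in C^\infty_c(\bbC\setminus\{0\})$, the bound of the first paragraph gives $\norm{T_{x,\tau}^2-(T^\epsilon_{x,\tau})^2}\leq C\norm{\bfts-\bfts_\epsilon}_2\left(\norm{\bfts}_2+\norm{\bfts_\epsilon}_2\right)$ uniformly in $(x,\tau)$, while each $(T^\epsilon_{x,\tau})^2$ factors as $P$ composed with multiplication by the compactly supported $\bfts_\epsilon e_{-x}$ and is compact by the gradient mapping property \eqref{eq:P-1a} together with Rellich compactness. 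A norm limit of compact operators is compact, so $T_{x,\tau}^2$ is compact; being injective and Fredholm of index zero, $I-T_{x,\tau}^2$ is invertible at each $(x,\tau)$.

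The remaining, and hardest, step is the uniformity, which I would obtain by showing that $\norm{T_{x,\tau}^2}_{\calB(L^{\tilde r})}\to 0$ as $|x|+|\tau|\to\infty$. Reducing once more to smooth $\bfts_\epsilon$ supported away from $k=0$ (uniformly in the parameters, as above), the kernel of $(T^\epsilon_{x,\tau})^2$ carries the oscillatory factor $e^{i\tau S}$ restricted to the compact support of $\bfts_\epsilon$, which excludes the stationary point at the origin; the resulting oscillatory integrals decay as $|x|+|\tau|\to\infty$ by non-stationary and stationary phase, in the spirit of the kernel estimates of Lemma \ref{lemma:Itau}. This yields $\norm{(T^\epsilon_{x,\tau})^2}\to 0$, and then $\limsup_{|x|+|\tau|\to\infty}\norm{T_{x,\tau}^2}\leq C\norm{\bfts-\bfts_\epsilon}_2\left(\norm{\bfts}_2+\norm{\bfts_\epsilon}_2\right)$ for every $\epsilon$, whence $\norm{T_{x,\tau}^2}\to 0$. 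Choosing $R$ so that $\norm{T_{x,\tau}^2}<1/2$ for $|x|+|\tau|>R$ controls $(I-T_{x,\tau}^2)^{-1}$ by a Neumann series (norm $\leq 2$) outside a large ball, while on the compact set $\{|x|+|\tau|\leq R\}$ the map $(x,\tau)\mapsto T_{x,\tau}^2$ is norm-continuous (by dominated convergence in the oscillatory factor) and pointwise invertible, so $(x,\tau)\mapsto\norm{(I-T_{x,\tau}^2)^{-1}}$ is continuous and bounded there. Combining the two regions yields the claimed uniform bound. I expect the oscillatory decay of $\norm{T_{x,\tau}^2}$ at infinity—carrying the singular Cauchy kernels through the stationary-phase analysis—to be the principal technical obstacle.
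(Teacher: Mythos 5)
Your proof has the same architecture as the paper's: the uniform bound via \eqref{eq:P-4} with $s=2$, triviality of kernels via Lemma \ref{lemma:P2} and Theorem \ref{thm:vanish}, compactness by reduction to smooth compactly supported $\bfs$, decay of $\norm{T_{x,\tau}^2}$ at infinity plus a Neumann series and the identity $(I-T)^{-1}=(I-T^2)^{-1}(I+T)$, and a continuity--compactness argument on the remaining compact set. Two of your variations are fine and arguably cleaner: working throughout with the genuinely linear operator $T_{x,\tau}^2$ and the factorization $I-T_{x,\tau}^2=(I+T_{x,\tau})(I-T_{x,\tau})$ avoids invoking the Fredholm alternative for the antilinear $T_{x,\tau}$ itself, and your norm-continuity of $(x,\tau)\mapsto T_{x,\tau}$ via \eqref{eq:P-4} and dominated convergence applied to $\norm{\bfs\left(e^{i\tau' S'}-e^{i\tau S}\right)}_{2}$ is simpler than the paper's argument (strong convergence of the unimodular multipliers composed with a fixed compact operator).

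There is, however, a genuine gap at exactly the step you flag as the principal obstacle: the proof that $\norm{T_{x,\tau}^2}_{\calB(L^{\tilde r})}\to 0$ as $|x|+|\tau|\to\infty$ (the paper's \eqref{LP1}--\eqref{LP2}). You propose to run stationary phase directly on the kernel of $(T^\epsilon_{x,\tau})^2$, which contains a product of two Cauchy kernels, and you give no mechanism for carrying those singular factors through the oscillatory analysis. Worse, the heuristic you offer in support is incorrect: the stationary points of the phase $\tau\left(k^3+\kbar^3\right)-\left(kx+\kbar\xbar\right)$ solve $3\tau k^2=x$, i.e. $k=\pm\left(x/3\tau\right)^{1/2}$, and these sweep through the entire plane as $(x,\tau)$ varies; cutting $\bfts_\epsilon$ off near $k=0$ does not exclude them, so no non-stationary-phase argument can yield bounds uniform in $x$. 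The paper's resolution is precisely the idea your sketch is missing: write the (adjoint) operator as $e^{i\tau S}W$ with $W=\bfs P_k$ compact and independent of $(x,\tau)$, approximate $W$ in operator norm by a finite-rank operator $\sum_j \langle \varphi_j,\dotarg\rangle\psi_j$ with $\varphi_j,\psi_j\in\calS(\bbC)$, and thereby reduce the operator-norm decay of the square to finitely many scalar oscillatory integrals
\[
\langle\varphi, e^{i\tau S}\psi\rangle=\int I_\tau(x-y)\left(\calF^{-1}(\varphi\psi)\right)(y)\,dm(y),
\]
which are controlled by the dispersive, Airy-type bounds on $I_\tau$ of Lemma \ref{lemma:Itau}; those bounds are uniform over the location of the stationary points, which is what your approach cannot achieve, and the Cauchy kernels never enter the oscillatory estimate at all. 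This device fits your formulation as well, since $T_{x,\tau}^2=A\, e^{i\tau S}\,\bar A\, e^{-i\tau S}$ where $Ah=P\left(\bfs h\right)$ and $\bar A h=\overline{P\left(\bfs \bar h\right)}$ are fixed compact operators that can each be replaced by finite-rank approximants with Schwartz-class factors. Without this reduction (or an honest kernel-level substitute), the uniformity assertion of the lemma --- its entire point --- remains unproved in your proposal.
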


\begin{proof}
We will prove the assertion in four steps. First, we will show that $T_{x,\tau}$ is a compact operator on $L^{\tilde{r}}(\bbC)$ with 
\begin{equation}
\label{OpNormEst}
\sup_{(x,\tau) \in \bbR^2 \times \bbR} \norm{T_{x,\tau}}_{\calB(L^{\tilde{r}})} \leq C(\tilde{r}) \norm{\bfs}_{L^2(\bbC)}.
\end{equation}

Second, we will prove that $\ker(I-T_{x,\tau})$ is trivial, so that, by the Fredholm alternative,
the operator $\left( I - T_{x,\tau} \right)^{-1}$ exists for all 
$(x,\tau) \in \bbR^2 \times \bbR$.

Third, we will show that
\begin{equation}
\label{LP1}
\lim_{T \rarr \infty} 
	\sup_{x \in \bbR^2, ~ |\tau| \geq T} 
		\norm{T_{x,\tau}^2}_{\calB(L^{\tilde{r}})} = 0
\end{equation}
and that for each $T>0$, 
\begin{equation}
\label{LP2}
\lim_{R \rarr \infty} 
	\sup_{|\tau| \leq T, ~ |x| \geq R} 
		\norm{T_{x,\tau}^2}_{\calB(L^{\tilde{r}})} = 0.
\end{equation}
It then follows that there are numbers $R$ and $T$ for which $\norm{T_{x,\tau}^2}_{\calB(L^{\tilde{r}})} < 1/2$ whenever $|x|\geq R$ or $|\tau|\geq T$, and  that therefore
\begin{equation}
\label{eq:lessthan2}
\norm{\left( I - T_{x,\tau}^2 \right)^{-1}}_{\calB(L^{\tilde{r}})} \leq 2
\end{equation}
on this set. Using the identity $(I-A)^{-1} = (I-A^2)^{-1}(I+A)$, the estimates \eqref{OpNormEst}, and \eqref{eq:lessthan2}, we recover
$$
\norm{\left( I - T_{x,\tau} \right)^{-1}}_{\calB(L^{\tilde{r}})} 
\leq 
2\left(1+C\norm{\bfs}_{L^2(\bbC)}\right)
$$
for all such $(x,\tau)$. 

Fourth, we will use a continuity-compactness argument to show that for any positive numbers $R$ and $T$,
$$
\sup_{|x| \leq R, \, |\tau| \leq T} \norm{\left( I - T_{x,\tau} \right)^{-1}}_{\calB(L^{\tilde{r}})}
$$
is finite.

(1) The norm estimate is immediate from \eqref{eq:P-4} with $p=q=\tilde{r}$. To show that $T_{x,\tau}$ is compact, it follows from \eqref{OpNormEst} and density that we may take $\bfs \in C_0^\infty(\bbC)$ without loss. It suffices to prove that the Banach space adjoint  $T_{x,\tau}'=e^{i\tau S}\bfs P_k$ is a compact operator on $L^{\tilde{r}'}(\bbC)$. Let $\Omega$ be the support of $\bfs$. If $f \in L^{\tilde{r}'}(\bbC)$ then $P_k f \in L^{2\tilde{r}/(\tilde{r}-2)}(\bbC)$ by \eqref{eq:P-1} while $\nabla P_k f \in L^{\tilde{r}'}(\bbC)$ by \eqref{eq:P-1a}. Thus 
$$
\norm{T_{x,\tau}'f}_{W^{1,\tilde{r}'}} \leq C \left(1+|\Omega|^{1/2} \right) \norm{f}_{{\tilde{r}'}}
$$ 
and compactness follows from the Rellich-Kondrachov Theorem.

(2) To see that $\ker\left(I-T_{x,\tau}\right)$ is trivial, note that by Lemma \ref{lemma:P2}, $\psi \in \ker\left(I-T_{x,\tau}\right)$ if and only if $\dbar \psi = e^{i\tau S}\bfs \psi$. Since $\bfs \in L^2(\bbC)$ and $\psi \in L^{\tilde{r}}(\bbC)$, it follows from Theorem \ref{thm:vanish} that $\psi =0$.

(3) We will prove \eqref{LP1} and \eqref{LP2} by proving the corresponding estimates for $(T_{x,\tau}')^2$ on $L^{\tilde{r}'}(\bbC)$. First, note that $T_{x,\tau}' = e^{i\tau S} W$ where $W=\bfs P_k$ is a compact operator independent of $(x,\tau)$. We have the estimate $\norm{T_{x,\tau}'}_{\calB(L^{\tilde{r}'})} \leq \norm{W}_{\calB(L^{\tilde{r}'})}$ uniform in $(x,\tau)$.  For any $\epsilon>0$ there is a finite-rank operator $F$ on $L^{\tilde{r}'}(\bbC)$   so that $\norm{W-F}_{\calB(L^{\tilde{r}'})} < \epsilon$. The operator  $F$ is a finite sum $\sum_j \langle \varphi_j, \dotarg \rangle \psi_j$ where 
$\varphi_j \in L^{\tilde{r}}(\bbC)$, $\psi_j \in L^{\tilde{r}'}(\bbC)$, and $\langle \dotarg,\dotarg \rangle$ is the usual dual pairing of $L^{\tilde{r}}(\bbC)$ and $L^{\tilde{r}'}(\bbC)$. A density argument shows that we may take $\psi_j$ and $\varphi_j$ in $\calS(\bbC)$ without loss.  Thus, it suffices to show that 
\begin{equation}
\label{LP1.pre}
\lim_{T \rarr \infty}  \sup_{x \in \bbR^2, ~ |\tau| \geq T} \left| \langle \varphi, e^{i\tau S} \psi \rangle \right| = 0
\end{equation}
and that, for each fixed $T>0$,
\begin{equation}
\label{LP2.pre}
\lim_{R \rarr \infty} 
	\sup_{|\tau| \leq T, ~ |x| \geq R} \left| \langle \varphi, e^{i\tau S} \psi \rangle \right| = 0
\end{equation}
where $\varphi$ and $\psi$ belong to $\calS(\bbC)$. 

Let $f(k)=\psi(k)\varphi(k)$. A short computation shows that, for $S=S(x,k,\tau)$,
$$
\langle \varphi, e^{i\tau S} \psi \rangle
=
\int I_\tau(x-y) \left(\calF^{-1}f\right)(y) \, dy
$$
where $I_\tau$ is given by \eqref{Itau} and
$$
\left( \calF^{-1} f\right)(x) = \frac{1}{\pi}\int e_{-k}(x) f(k)\, dm(k).
$$
We now appeal to Lemma \ref{lemma:Itau} to estimate
\begin{equation}
\label{IPest.1}
\left| \langle \varphi, e^{i\tau S} \psi \rangle \right|
\leq
\int  \tau^{-2/3} (1+|(x-y)/\tau^{1/3}|)^{-1/2}  \left| \left(\calF^{-1}f\right)(y) \right|  \, dm(y).
\end{equation}
To prove \eqref{LP1.pre}, we estimate the right-hand side of \eqref{IPest.1} by $T^{-2/3}\norm{\calF^{-1}f}_{L^1}$. This gives \eqref{LP1}.  To prove \eqref{LP2.pre} we make the change of variables $\xi =(x-y)/\tau^{1/3}$ and obtain
\begin{eqnarray*}
\left| \langle \varphi, e^{i\tau S} \psi \rangle \right|
		&\leq&	\int (1+|\xi|)^{-1/2}  
					\left| \left(\calF^{-1}f\right)(x-\xi \tau^{1/3}) \right|  
				\, dm(\xi)  \\
\nonumber
		&\leq& C(f,N) (I_1(x,\tau)+I_2(x,\tau))				
\end{eqnarray*}
where $C(f,N)$ depends on the Schwartz seminorms of $f$ and
\[I_1(x,\tau) = \int_{|\xi|\leq |x|/(2T^{1/3})} (1+|\xi|)^{-1/2}(1+|x-\xi\tau^{1/3}|)^{-N} \, dm(\xi)\]
\[I_2(x,\tau) = \int_{|\xi|\geq |x|/(2T^{1/3})} (1+|\xi|)^{-1/2}(1+|x-\xi\tau^{1/3}|)^{-N} \, dm(\xi).\]

For $|\tau|>1$, equation \eqref{LP2.pre} now follows from the preceding arguments and the elementary estimates
\begin{eqnarray*}
\left| I_1(x,\tau) \right| &\leq& c(N)\left( \frac{|x|}{2T^{1/3}} \right)^2 (1+|x|)^{-N} \\
\left| I_2(x,\tau) \right| &\leq& c(N)\left(1+\frac{|x|}{2T^{1/3}}\right)^{-1/2}.
\end{eqnarray*}
For $|\tau|<1$, we note that the we can rewrite the operator $T_{x,\tau}'$ as
\[T_{x,\tau}' = e^{i(\tau-3)S(x,k,\tau-3)} \left[e^{3i(k^3+\bar k^3)} \bfs(k) P_k\right].\]
Equation \eqref{LP2.pre} then holds for $|\tau-3|>1$ as well. This implies \eqref{LP2}.

(4) We claim that the map
\begin{eqnarray*}
\bbR^2 \times \bbR	&\longrightarrow& \calB(L^{\tilde{r}}(\bbC))\\
(x,\tau)	&\mapsto	&\left(I-T_{x,\tau}\right)^{-1}
\end{eqnarray*}
is continuous. If so, it is continuous on the compact set $\{(x,\tau): |x| \leq R, |\tau| \leq T \}$ hence bounded there. By the second resolvent formula and duality, it suffices to show that the map
\begin{eqnarray*}
\bbR^2 \times \bbR	&\longrightarrow& \calB(L^{\tilde{r}'}(\bbC))\\
(x,\tau)	&\mapsto&  T_{x,\tau}'
\end{eqnarray*}
is continuous. But, viewed as multiplication operators on 
$L^{\tilde{r}'}(\bbC)$,  $e^{i\tau' S(\dotarg,x',\tau')} \rarr e^{i\tau S(\dotarg,x,\tau)}$ as $(x',\tau') \rarr (x,\tau)$ 
in the strong operator topology. Since 
$$
T_{x',\tau'}' - T_{x,\tau}' = \left( e^{i\tau S(\dotarg,x',\tau')}-e^{i\tau S(\dotarg,x,\tau)} \right) W
$$ for a fixed compact operator $W$, it follows that 
$$
\norm{T_{x',\tau'}'-T_{x,\tau}'}_{\calB(L^{\tilde{r}'})} \rarr 0, \quad
 (x',\tau') \rarr (x,\tau).
$$
This proves the 
required continuity.
\end{proof}

Given the uniform resolvent bounds, we can use the second resolvent formula and the continuous dependence of $T_{x,\tau}$ on its parameters to prove various continuity results about the resolvent.

\begin{lemma}
\label{lem:op-cont}
Suppose that $\bfs \in L^2(\mathbb C)$.  For any ${\tilde{r}}>2$, 
\vskip 0.1cm
(i) The mapping
	\begin{eqnarray*}
	\bbR^2 \times \bbR &\longrightarrow& \calB(L^{\tilde{r}}) \\
	(x,\tau) &\mapsto& \left( I - T_{x,\tau} \right)^{-1}
	\end{eqnarray*}
	is continuous.
\vskip 0.1cm
(ii) The mapping
	\begin{eqnarray*}
	L^2(\bbC) &\longrightarrow& C(\bbR^2 \times \bbR, \calB(L^{\tilde{r}}))\\
	\bfs &\mapsto &
				\left( 
							(x,\tau) \mapsto \left( I - T_{x,\tau} \right)^{-1} 
				\right)
	\end{eqnarray*}
	is continuous.
\end{lemma}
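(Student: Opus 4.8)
The plan is to deduce both continuity statements from the uniform resolvent bound of Lemma~\ref{lemma:Txtau} together with the fact, already extracted in the last step of its proof, that $(x,\tau)\mapsto T_{x,\tau}$ is norm continuous into $\calB(L^{\tilde{r}})$. Throughout I keep in mind that $T_{x,\tau}$ is conjugate linear in its argument, so every operator in sight is only $\bbR$-linear; this is harmless, since operator norms, submultiplicativity, the Neumann series, and the purely algebraic resolvent identities used below all make sense verbatim for bounded $\bbR$-linear maps.

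For (i), fix $\bfs\in L^2(\bbC)$ and write $B_{x,\tau}=(I-T_{x,\tau})^{-1}$, which exists and satisfies $M_0:=\sup_{(x,\tau)}\norm{B_{x,\tau}}_{\calB(L^{\tilde{r}})}<\infty$ by Lemma~\ref{lemma:Txtau}. The algebraic identity
\[
B_{x',\tau'}-B_{x,\tau}=B_{x',\tau'}\left(T_{x',\tau'}-T_{x,\tau}\right)B_{x,\tau}
\]
(immediate from $B^{-1}=I-T$) gives $\norm{B_{x',\tau'}-B_{x,\tau}}\le M_0^2\,\norm{T_{x',\tau'}-T_{x,\tau}}$. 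Since the last factor tends to $0$ as $(x',\tau')\to(x,\tau)$ by the norm continuity of $(x,\tau)\mapsto T_{x,\tau}$ (established, via the factorization $T_{x,\tau}'=e^{i\tau S}W$ with $W$ a fixed compact operator and duality, at the end of the proof of Lemma~\ref{lemma:Txtau}), continuity of $(x,\tau)\mapsto B_{x,\tau}$ follows. In particular the target of the map in (ii) is indeed $C^0(\bbR^2\times\bbR,\calB(L^{\tilde{r}}))$.

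For (ii), the key point is that $\bfs\mapsto T_{x,\tau}$ is \emph{linear}, so applying \eqref{OpNormEst} to $\bfs-\bfs_0$ yields the uniform bound $\sup_{(x,\tau)}\norm{T_{x,\tau}^{\bfs}-T_{x,\tau}^{\bfs_0}}_{\calB(L^{\tilde{r}})}\le C(\tilde{r})\norm{\bfs-\bfs_0}_{L^2}$. Fixing $\bfs_0$, letting $M_0$ be its uniform resolvent bound and $B_{x,\tau}=(I-T_{x,\tau}^{\bfs_0})^{-1}$, I would use the exact factorization
\[
I-T_{x,\tau}^{\bfs}=\left(I-T_{x,\tau}^{\bfs_0}\right)\left[\,I-B_{x,\tau}\left(T_{x,\tau}^{\bfs}-T_{x,\tau}^{\bfs_0}\right)\right].
\]
Once $\norm{\bfs-\bfs_0}_{L^2}$ is small enough that $C(\tilde{r})M_0\norm{\bfs-\bfs_0}_{L^2}\le\tfrac12$ — a condition uniform in $(x,\tau)$ — the bracketed factor is invertible by Neumann series with inverse of norm $\le 2$, so $(I-T_{x,\tau}^{\bfs})^{-1}$ exists and
\[
\norm{(I-T_{x,\tau}^{\bfs})^{-1}-B_{x,\tau}}_{\calB(L^{\tilde{r}})}\le\frac{\norm{B_{x,\tau}(T_{x,\tau}^{\bfs}-T_{x,\tau}^{\bfs_0})}}{1-\norm{B_{x,\tau}(T_{x,\tau}^{\bfs}-T_{x,\tau}^{\bfs_0})}}\,\norm{B_{x,\tau}}\le 2M_0^2\,C(\tilde{r})\norm{\bfs-\bfs_0}_{L^2}.
\]
The right-hand side is independent of $(x,\tau)$, so the supremum over $(x,\tau)$ obeys the same bound, which is precisely continuity at $\bfs_0$ in $C^0(\bbR^2\times\bbR,\calB(L^{\tilde{r}}))$.

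The two points needing care — and the main obstacle — are, first, that every estimate must be uniform in $(x,\tau)$, which is exactly what the global resolvent bound of Lemma~\ref{lemma:Txtau} and the $(x,\tau)$-independent estimate \eqref{OpNormEst} supply; and, second, the conjugate linearity of $T_{x,\tau}$, which I handle by working only with $\bbR$-linear operator norms and exact factorizations rather than with spectral/linear-algebraic notions. The Neumann-series route is chosen deliberately to avoid needing any a priori uniform-in-$\bfs$ resolvent bound (which is not obviously available, since the thresholds in \eqref{LP1}--\eqref{LP2} depend on more than $\norm{\bfs}_{L^2}$): perturbing off the single fixed $\bfs_0$ produces the bound for all nearby $\bfs$ automatically.
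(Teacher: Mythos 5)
Your proof is correct and follows essentially the same route as the paper's: part (i) rests on the second resolvent formula together with the norm continuity of $(x,\tau)\mapsto T_{x,\tau}$ obtained from the factorization $T_{x,\tau}'=e^{i\tau S}W$ with $W$ compact (exactly the argument in step (4) of the proof of Lemma \ref{lemma:Txtau}), and part (ii) combines linearity of $\bfs\mapsto T_{x,\tau}(\bfs)$, the uniform estimate \eqref{OpNormEst}, and a resolvent-identity perturbation off the fixed $\bfs_0$, uniformly in $(x,\tau)$. Your explicit Neumann-series factorization merely spells out the step the paper compresses into ``we easily deduce,'' and your remark on handling the conjugate-linearity of $T_{x,\tau}$ by working with $\bbR$-linear operator norms is a correct clarification of a point the paper leaves implicit.
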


\begin{proof}
(i) This was already proved in the proof of Lemma \ref{lemma:Txtau}.
\vskip 0.1cm
(ii) Fix $(x,\tau)$ and write $T_{x,\tau}=T_{x,\tau}(\bfs)$ to emphasize the dependence on $\bfs$. From the second resolvent formula
\begin{eqnarray*}
\left(
		I - T_{x,\tau}(\bfs_1)
\right)^{-1} 
&
	- \left(
			I - T_{x,\tau}(\bfs_2)
	\right)^{-1}
	\\
&
	=	\left(I - T_{x,\tau}(\bfs_2)\right)^{-1}
			\left[ T_{x,\tau}(\bfs_1) - T_{x,\tau}(\bfs_2) \right]
			\left(I - T_{x,\tau}(\bfs_1)\right)^{-1}
\end{eqnarray*}
and the fact that
\[ \sup_{(x,\tau) \in \bbR^2 \times \bbR} \| T_{x,\tau}(\bfs_1) - T_{x,\tau}(\bfs_2) \|_{\calB(L^{\tilde{r}})} \leq C_{\tilde{r}} \| \bfs_1 - \bfs_2 \|_2 \]
we easily deduce that $\left\| \left( I - T_{x,\tau}(\bfs) \right)^{-1} \right\|_{\calB(L^{\tilde{r}})}$ is bounded uniformly in $(x,\tau)$ for $\bfs$ in a small metric ball in $L^2$ whose radius depends on the center but is uniform in $(x,\tau)$. For $\bfs_1$ and $\bfs_2$ in such a metric ball $B$ we may estimate
\[
\sup_{(x,\tau) \in \bbR^2 \times \bbR} \left\| \left(
		I - T_{x,\tau}(\bfs_1)
\right)^{-1} 
	- \left(
			I - T_{x,\tau}(\bfs_2)
	\right)^{-1}
\right\|_{\calB(L^{\tilde{r}})}
	\leq C(B) \| \bfs_1 - \bfs_2 \|_2.
\]
This gives the claimed continuity.
\end{proof}

We have already established in Lemma \eqref{lem:mu-diff}  that, for $\bfs \in \calX_{n,r}$, the derivatives $\dee_\tau^m D_x^\alpha \left( \mu(x,\dotarg,\tau) \right)$ exist in $L^{\tilde{r}}(\bbC)$, provided that $(3m+|\alpha|) \leq n$.  We are now ready to prove the continuity of several key maps in $\bfs$.

\begin{lemma}
\label{lem:mapcont}
Given $r\in (1,2)$
\vskip 0.1cm
(i) For any $n\geq 3m+|\alpha|$,  the maps
\begin{eqnarray*}
\calX_{n,r} &\longrightarrow& C(\bbR^2 \times \bbR, L^{\tilde r}(\bbC))\\
\bfs &\mapsto& \Bigl(\,  (x,\tau) \mapsto \dee_\tau^m D_x^\alpha(\mu(x,\cdot,\tau)-1) \, \Bigr)
\end{eqnarray*}
are continuous. 
\vskip 0.1cm
(ii) For any $n \geq 3m+|\alpha|+2$, the map
\begin{eqnarray*}
\calX_{n,r} &\mapsto& 			C(\bbR^2 \times \bbR)\\
\bfs			&\rightarrow&		\dee_\tau^m D_x^\alpha q(x,\tau)
\end{eqnarray*}
is continuous.
\end{lemma}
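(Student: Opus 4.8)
The plan is to deduce both statements from two facts already in hand: the uniform-in-$(x,\tau)$ continuity of the resolvent $\bfs\mapsto\big((x,\tau)\mapsto(I-T_{x,\tau})^{-1}\big)$ proved in Lemma \ref{lem:op-cont}, and the mapping properties of $P_k$ in Lemma \ref{lemma:P1}, read against the structure of the $\calX_{n,r}$-norm. The basic arithmetic fact I would use repeatedly is that the Sobolev pairing $2\tilde r/(\tilde r+2)$ equals $r$, so that by \eqref{eq:P-1} the operator $P_k$ carries $L^r(\bbC)$ into $L^{\tilde r}(\bbC)$, and that $|k|^{j}|\bfs|\le|\bfs|+|k|^{n}|\bfs|$ pointwise places $k^{j}\bfs$ in $L^{2}(\bbC)$ for every $j\le n$, with norm controlled by $\|\bfs\|_{\calX_{n,r}}$.

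For (i) I would induct on $N=3m+|\alpha|$. When $N=0$ one has $\mu(x,\cdot,\tau)-1=(I-T_{x,\tau})^{-1}P_k[\bfs\,e_{-x}]$; subtracting the same expression for a second transform $\bfs'$, the second resolvent formula writes the difference as $\big[(I-T_{x,\tau}(\bfs))^{-1}-(I-T_{x,\tau}(\bfs'))^{-1}\big]P_k[\bfs\,e_{-x}]+(I-T_{x,\tau}(\bfs'))^{-1}P_k[(\bfs-\bfs')e_{-x}]$. The first bracket is small uniformly in $(x,\tau)$ by Lemma \ref{lem:op-cont}, while $\|P_k[\bfs\,e_{-x}]\|_{\tilde r}\le C\|\bfs\|_r\le C\|\bfs\|_{\calX_{n,r}}$ and $\|P_k[(\bfs-\bfs')e_{-x}]\|_{\tilde r}\le C\|\bfs-\bfs'\|_{\calX_{n,r}}$; since $|e_{-x}|\equiv1$ nothing depends on $(x,\tau)$. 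For the inductive step I would feed the formula \eqref{mu.deriv}--\eqref{f} of Lemma \ref{lem:mu-diff}: $\partial_\tau^m D_x^\alpha\mu=(I-T_{x,\tau})^{-1}P_k f$, where the commutator $f$ is a finite sum of terms $p(k,\bar k)\,\bfs\,e^{i\tau S}\,\overline{\partial_\tau^{m'}D_x^{\alpha'}\mu}$ with $p$ a monomial of degree $\le N$ and $3m'+|\alpha'|<N$. Each factor $p(k,\bar k)\bfs$ is bounded-linear from $\calX_{n,r}$ into $L^2$, each lower-order $\mu$-derivative is continuous in $\bfs$ into $L^{\tilde r}$ by the induction hypothesis, and their product sits in $L^r$ by Hölder; hence $P_k f\in L^{\tilde r}$ is continuous in $\bfs$, and applying the uniformly continuous resolvent finishes the step. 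Continuity of $(x,\tau)\mapsto\partial_\tau^m D_x^\alpha\mu$ for fixed $\bfs$, needed so the values lie in $C(\bbR^2\times\bbR,L^{\tilde r})$, comes from Lemma \ref{lem:op-cont}(i) and dominated convergence in the phase.

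For (ii) I would route through the first coefficient of the expansion \eqref{eq:largek-mu}. By the identity $a_1=-i\dbar_x^{-1}q$ recorded after that expansion, the reconstruction \eqref{ISM} reads $q=i\dbar_x a_1$, so differentiating the expansion termwise gives $\partial_\tau^m D_x^\alpha q=i\cdot(\text{coefficient of }k^{-1}\text{ in }\partial_\tau^m D_x^\beta\mu)$, where $D_x^\beta=\dbar_x D_x^\alpha$ and $|\beta|=|\alpha|+1$. Writing $\partial_\tau^m D_x^\beta(\mu-1)=P_k\big[\partial_\tau^m D_x^\beta(\bfs\,e^{i\tau S}\overline\mu)\big]$ and applying Lemma \ref{lemma:P3} with one moment, this coefficient becomes the absolutely convergent integral $\tfrac1\pi\int_{\bbC}\partial_\tau^m D_x^\beta(\bfs\,e^{i\tau S}\overline\mu)\,dm(k)$, a finite Leibniz sum of terms $\int_{\bbC} p(k,\bar k)\,\bfs\,e^{i\tau S}\,\overline{\partial_\tau^{m'}D_x^{\alpha'}\mu}\,dm(k)$ with $\deg p\le 3m+|\alpha|+1$ and $3m'+|\alpha'|\le 3m+|\alpha|+1$. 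The hypothesis $n\ge 3m+|\alpha|+2$ is used exactly twice: it makes every occurring $\mu$-derivative (of weight $\le 3m+|\alpha|+1$) exist and be continuous in $\bfs$ by part (i), and it places the top weight $k^n\bfs$ in $L^r\cap L^2$, which is precisely the weighted integrability Lemma \ref{lemma:P3} demands to isolate the $k^{-1}$-coefficient. Since every monomial appearing has degree $\le n-1$, the extra decay puts each $k^{j}\bfs$ in $L^{\tilde r'}$, so pairing through Hölder with $\partial_\tau^{m'}D_x^{\alpha'}\mu\in L^{\tilde r}$ gives integrals that converge absolutely and, via part (i) and the bounded-linearity of $\bfs\mapsto p(k,\bar k)\bfs$, depend continuously on $\bfs$ uniformly in $(x,\tau)$; uniformity is free because $|e^{i\tau S}|\equiv1$.

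The main obstacle is the bookkeeping in (ii): justifying the interchange of $\partial_\tau^m D_x^\beta$ with the $k$-integral, the identification of the outer $\dbar_x$ in \eqref{ISM} with coefficient extraction, and the application of Lemma \ref{lemma:P3}, all of which hinge on the sharp count $n\ge 3m+|\alpha|+2$ and on a careful---if routine---tracking of Lebesgue exponents through the $\calX_{n,r}$-norm (Definition \ref{def:xnp}, Remark \ref{rem:Xnp}). By contrast, part (i) is essentially mechanical once Lemma \ref{lem:op-cont} is available, and the uniformity over $\bbR^2\times\bbR$ that one might fear is handed to us for free by the unit modulus of the phase $e^{i\tau S}$.
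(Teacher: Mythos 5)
Your proposal is correct and follows essentially the same route as the paper: part (i) rests on the resolvent continuity of Lemma \ref{lem:op-cont}(ii), the derivative formula \eqref{mu.deriv}--\eqref{f} of Lemma \ref{lem:mu-diff}, induction on the derivative order, and the mapping properties of $P_k$ (the paper invokes \eqref{eq:P-4} with $s=2$ where you use H\"older plus \eqref{eq:P-1}, which is the same estimate); part (ii) is obtained, as in the paper, by differentiating the reconstruction formula \eqref{ISM} under the integral and pairing the monomial-weighted $\bfs$ against the $\mu$-derivatives from part (i), your coefficient-extraction via Lemma \ref{lemma:P3} being exactly the content of the paper's appeal to \eqref{ISM} and the embedding $\calX_{n+1,r}\subset L^1_n$. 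The only differences are expository: you spell out the Leibniz bookkeeping that the paper defers to Music's Lemmas 5.5--5.6.
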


\begin{proof}
For any $n\geq 3m+|\alpha|$ we will show the map
\begin{eqnarray*}
\calX_{n,r} &\longrightarrow& C(\bbR^2 \times \bbR, L^{\tilde r}(\bbC))\\
\bfs &\mapsto& \Bigl( \, (x,\tau) 
	\mapsto \dee_\tau^m D^\alpha_x [T_{x,\tau}(\bfs)(1)] \Bigr)
\end{eqnarray*}
is continuous. The argument is similar to the proof of \cite[Lemma 5.4]{Music:2013}. From the computation
\begin{equation}
\label{TDerivs}
\dee_\tau^m D_x^\alpha T_{x,\tau}1 = P_k \left[ e^{itS} (-2ik_2)^{\alpha_1}(2ik_1)^{\alpha_2} [i(k^3+\kbar^3)]^m \bfs(\dotarg) \right]
\end{equation}
and \eqref{eq:P-1}, we see that it suffices to bound $\norm{ |\dotarg|^\ell (\bfs_1(\dotarg)-\bfs_2(\dotarg))}_{r}$ by $\norm{\bfs_1 - \bfs_2}_{\calX_{n,r}}$, where $\ell=3m+|\alpha|\leq n$. This is immediate from the definition.

Part (i) is the same as \cite[Lemma 5.5]{Music:2013} except here we keep track of the continuous dependence on the scattering data. We need to check that the expression
\eqref{mu.deriv} defines a continuous $C(\bbR^2 \times \bbR;L^{\tilde{r}}(\bbC))$-valued function of $\bfs$. By Lemma \ref{lem:op-cont}(ii), it suffices to show that 
$\bfs \mapsto P_k f$ (with $f$ given by \eqref{f}) has the same property. A typical term of $P_k f$ takes the form
$$ 
P_k \left[ \bfs(\dotarg) e^{itS} k^\beta (i((\dotarg)^3+(\overline{\dotarg})^3))^\ell 
	\left( \dee_\tau^\ell \dee_x^\beta \mu \right)(x,\dotarg,\tau) \right]
$$
where $|\beta|+\ell \leq m-1$. Assuming inductively that the derivatives $ \dee_\tau^\ell \dee_x^\beta \mu $ are continuous $C(\bbR^2 \times \bbR, L^{\tilde{r}}(\bbC))$-valued functions of $\bfs$, we can now use \eqref{eq:P-4} with $q=\tilde{r}$, $p=r$, and $s=2$ to obtain the required continuity. Thus it remains to prove that $\mu(x,\cdot,k)-1$ is a continuous $C(\bbR^2 \times \bbR;L^{\tilde{r}}(\bbC))$-valued function of $\bfs$. This is an immediate consequence of Lemma \ref{lem:op-cont}(ii) and equation \eqref{TDerivs}.

Part (ii) follows directly from part (i). The proof is the same as in Music \cite[Lemma 5.6]{Music:2013}. The result follows from equation \eqref{ISM}, part (i), and the fact that $\bfs\in \calX_{n+1,r}\subset L^1_{n}$.
\end{proof}

\section{Symmetries of the scattering transform}
\label{sec.symmetry}

In this section we prove that $q(x)$ is real if and only if $\bft(k)= \overline{\bft(-k)}$. This symmetry was proved in the negative energy case by Grinevich and Manakov in \cite{GM:1988} where they also sketch an argument for the symmetry in the zero-energy case. 

Here we will write this argument directly in the language of the zero-energy case. First, we note Lemma \ref{lemma:mu.q} which allows us to prove relevant formulas for smooth potentials and conclude that these formulas continue to hold for $q \in L^p_\rho(\bbR^2)$ by continuity. 
\refnote{Here we removed the erroneous proof}
Because the proof is somewhat lengthy, we defer it to \ref{app:symmetry}.

\begin{lemma}
If $q\in L^p_\rho(\mathbb R^2)$ for $p\in(1,2)$ and $\rho>2/p'$ is real and $k$ and $-k$ are not exceptional points, then $\bft(k)=\overline{\bft(-k)}$.
\label{lem:symmetry}
\end{lemma}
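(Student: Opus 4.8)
The plan is to prove the identity first for smooth, compactly supported real potentials and then to pass to the general case by continuity. Since $\rho>2/p'$ forces $L^p_\rho(\bbR^2)\subset L^1(\bbR^2)$, the transform $\bft$ is well defined, and $C_c^\infty(\bbR^2)$ is dense in $L^p_\rho(\bbR^2)$. I would choose real $q_n\in C_c^\infty(\bbR^2)$ with $q_n\to q$ in $L^p_\rho$. Because $\pm k$ are non-exceptional for $q$, Lemma \ref{lemma:mu.q} (applied with $q$ in the role of the reference potential) shows that $\pm k$ remain non-exceptional for $q_n$ once $n$ is large and that $\mu(\,\cdot\,,\pm k;q_n)\to\mu(\,\cdot\,,\pm k;q)$ in $W^{1,\tilde p}(\bbR^2)$; Lemma \ref{lemma:t.cont} then gives $\bft_n(\pm k)\to\bft(\pm k)$. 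Hence it suffices to prove $\bft_n(k)=\overline{\bft_n(-k)}$ for each $q_n$, and the identity for $q$ follows by taking limits.

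For a fixed smooth compactly supported real $q$, the starting observation is that the full CGO solution $\psi(x,k)=e^{ikx}\mu(x,k)$ solves $-\dbar_x\partial_x\psi+q\psi=0$, and that, because $q$ is real, $\overline{\psi(x,-k)}$ solves the \emph{same} equation. A direct computation using $\overline{e^{-ikx}}=e^{i\kbar\xbar}$ shows that the product carries the scattering phase,
\[
\psi(x,k)\,\overline{\psi(x,-k)} = e_k(x)\,\mu(x,k)\,\overline{\mu(x,-k)}.
\]
It is worth emphasizing the structural point that forces a bilinear argument: conjugation does not preserve the $\mu$-equation but sends it to the transposed $\dbar$-problem, so that $\overline{\mu(x,-k)}$ solves $\partial_x(\dbar_x+i\kbar)(\,\cdot\,)=q(\,\cdot\,)$ rather than the original equation; in general there is no pointwise identity $\overline{\mu(x,-k)}=\mu(x,k)$.

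The symmetry is therefore extracted from a reciprocity (Alessandrini--Wronskian) identity. Writing $\psi_1=\psi(x,k)$ and $\psi_2=\overline{\psi(x,-k)}$, the fact that both solve the Schr\"odinger equation makes the bilinear concomitant $\psi_1\dbar_x\partial_x\psi_2-\psi_2\dbar_x\partial_x\psi_1=\psi_1q\psi_2-\psi_2q\psi_1=0$ a pure divergence, so that
\[
\oint_{|x|=R}\left[(\psi_1\,\partial_x\psi_2-\psi_2\,\partial_x\psi_1)\,dz-(\psi_1\,\dbar_x\psi_2-\psi_2\,\dbar_x\psi_1)\,d\bar z\right] = 0
\]
for every $R$. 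Feeding in the large-$|x|$ behavior of $\mu(x,\pm k)$, in which $\bft(\pm k)$ appear as the subleading coefficients, the leading part of the integrand is $-d e_k$ and integrates to zero (since $\mu\to 1$), while the next order should produce exactly the relation $\bft(k)-\overline{\bft(-k)}=0$.

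The step I expect to be the main obstacle is the rigorous asymptotic evaluation of this oscillatory boundary integral: CGO solutions approach $1$ only at rate $|x|^{-1}$ while $e_k$ has modulus one and oscillates, so isolating the scattering data from the contour integral requires either a stationary-phase estimate on $|x|=R$ or explicit control of the subleading asymptotics. For smooth compactly supported $q$ these asymptotics of $\mu(\,\cdot\,,\pm k)$ are available through the Faddeev Green's function, whose symmetry $g_{-k}(x)=g_k(-x)$ serves as the computational engine; combined with the density and continuity reduction of the first paragraph, this yields the claim. The conceptual crux --- the Grinevich--Manakov idea applied directly at zero energy --- is precisely the decision to pair $\psi(x,k)$ with $\overline{\psi(x,-k)}$ rather than to look for a pointwise conjugation symmetry of a single CGO family.
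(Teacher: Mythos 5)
Your reduction to smooth, compactly supported real $q$ (density in $L^p_\rho$, Lemma \ref{lemma:mu.q} to preserve non-exceptionality and get convergence of $\mu$, Lemma \ref{lemma:t.cont} to pass to the limit in $\bft$) is exactly the paper's first and last step. For the smooth case, however, you take a genuinely different route. The paper works with the \emph{non-closed} form $\omega = e_k\overline{\dbar_x\mu(x,-k)}\,dx + e_k\dbar_x\mu(x,k)\,d\bar x$, whose exterior derivative is $e_k q\left(\mu(x,k)-\overline{\mu(x,-k)}\right)dx\wedge d\bar x$: there the scattering data sits in the \emph{bulk} integral, and everything rests on the claim that $\oint_{|x|=R}\omega\to 0$. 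You instead pair $\psi(x,k)$ with $\overline{\psi(x,-k)}$ in a Wronskian form that is \emph{closed} (both factors solve the same real Schr\"odinger equation), so every circle integral vanishes identically, and the symmetry is read off from the boundary asymptotics. The common technical engine, which you correctly flag as the real work, is the large-$x$ expansion coming from the two critical points of the Faddeev Green's function,
\begin{equation*}
\mu(x,k) = 1 + \frac{c_1(k)}{x} + \frac{i\,\bfs(k)\,e_{-k}(x)}{\xbar} + o\!\left(|x|^{-1}\right),
\qquad
\dbar_x\mu(x,k) \sim \frac{\bft(k)\,e_{-k}(x)}{\pi\,\xbar},
\end{equation*}
valid for $q\in C_c^\infty(\bbR^2)$. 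Inserting this into your contour integral, the oscillatory pieces vanish by stationary phase, while the non-oscillating $1/x$ and $1/\xbar$ pieces contributed by the $dx$ and $d\bar x$ components \emph{add} rather than cancel, yielding $0=-4i\left(\bft(k)-\overline{\bft(-k)}\right)$; so your sketch does close, and the asymptotic expansion is the only missing ingredient. What your route buys is more than elegance: the same asymptotics show that in the paper's argument $e_k\dbar_x\mu(x,k)\sim \bft(k)/(\pi\xbar)$ is non-oscillating of size $1/R$, so the paper's boundary term does \emph{not} vanish --- one computes $\oint_{|x|=R}\omega\to -2i\left(\bft(k)-\overline{\bft(-k)}\right)$, which is exactly the bulk term, so Stokes applied to $\omega$ alone returns a tautology. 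In other words, the paper's shorter proof implicitly requires precisely the boundary analysis your Wronskian argument makes explicit; the price you pay is having to establish the wave-function asymptotics (standard for $C_c^\infty$ potentials via the Green's function symmetry $g_{-k}(x)=g_k(-x)$ you cite), whereas the paper's computation needs no expansion of $\mu$ at all --- but also, as written, no justification of its crucial vanishing claim.
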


Now, assuming that the scattering data $\bft(k)$ has this symmetry, we prove that, if $q(x)$ is computed using the reconstruction formula \eqref{eq:q.recon}, then $q(x)=\overline{q(x)}$. For simplicity, we also assume that ${\bft}(k) = 0$ in a neighborhood of the origin. We follow the proof presented in \cite[Theorem 3.4]{Grinevich:2000}, just rewriting it in terms of the zero energy problem. 
\begin{lemma}
If $\bft \in C^\infty_c(\bbC)$ with ${\bft}(k)=0$ for $|k|<\epsilon$ for some $\epsilon>0$ and $\bft(k)=\overline{\bft(-k)}$ then $q(x)$ defined by equation \eqref{eq:q.recon} is real. Moreover, the solution $\mu(x,k)$ of the $\dbar$-problem \eqref{eq:dbar-k.static} satisfies
\[\dbar_x(\partial_x+ik)\mu(x,k)=q(x)\mu(x,k)\]
in distribution sense.
\label{lem:invsymmetry}
\end{lemma}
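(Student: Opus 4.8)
The plan is to show that applying the spatial operator $D := \dbar_x(\partial_x + ik)$ to $\mu$ produces a function solving the \emph{same} $\dbar_k$-equation as $\mu$, and then to identify $D\mu$ with $q\mu$ by a Liouville-type uniqueness argument. Since $\dbar_k$ commutes with $\partial_x,\dbar_x$ and annihilates $k$, differentiating \eqref{eq:dbar-k.static} gives $\dbar_k(D\mu) = D(\dbar_k\mu) = D(e_{-x}\bfs\overline\mu)$. Using $\partial_x e_{-x}(k) = -ik\,e_{-x}(k)$ and $\dbar_x e_{-x}(k) = -i\kbar\,e_{-x}(k)$ the factor $e_{-x}\bfs$ passes through $D$, and a short computation (the two first-order terms cancel in $(\partial_x+ik)(e_{-x}\overline\mu)=e_{-x}\partial_x\overline\mu$, and $\overline{D\mu}=\partial_x\dbar_x\overline\mu - i\kbar\partial_x\overline\mu$) yields
\[ \dbar_k(D\mu) = e_{-x}(k)\,\bfs(k)\,\overline{D\mu}, \]
so that $w := D\mu$ satisfies the homogeneous equation \eqref{eq:dbar-k.static}. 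From Lemma \ref{lemma:P3} and the reconstruction \eqref{eq:q.recon} I read off $\mu = 1 + a_1(x)/k + o(|k|^{-1})$ with $q = i\dbar_x a_1$, and correspondingly $w(x,k)\to q(x)$ as $|k|\to\infty$. Because $\bft\in C_c^\infty$ vanishes near $0$, $\bfs=\bft/(\pi\kbar)$ is itself smooth and compactly supported, so $e_{-x}\bfs\in L^2(\bbC)$ and $\mu$ is smooth in $x$; this legitimizes the differentiations and makes the resulting equation classical, hence distributional.

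Granting for the moment that $q$ is real, the identification $w=q\mu$ is immediate: $q\mu\to q$, and since $q=\overline q$ one has $\dbar_k(q\mu)=q\,e_{-x}\bfs\overline\mu=e_{-x}\bfs\,\overline{q\mu}$, so $\eta:=w-q\mu$ solves $\dbar_k\eta=e_{-x}\bfs\overline\eta$ with $\eta\to0$ and $\eta(x,\cdot)\in L^{\tilde r}(\bbC)$. Applying Theorem \ref{thm:vanish} with $a=0$ and $b=e_{-x}\bfs\in L^2(\bbC)$ forces $\eta\equiv0$, which is exactly the asserted Schr\"odinger equation $\dbar_x(\partial_x+ik)\mu=q\mu$. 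Thus the whole lemma reduces to proving that the reconstructed $q=i\dbar_x a_1$ is real-valued, equivalently $\dbar_x a_1 + \partial_x\overline{a_1}=0$.

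The hard part is precisely this reality statement, and it is genuinely delicate: equation \eqref{eq:dbar-k.static} is only \emph{real}-linear (it couples $\mu$ to $\overline\mu$), so reality cannot be extracted from a pointwise conjugation symmetry of $\mu$ — indeed $\overline{\mu(x,-k)}\neq\mu(x,k)$ in general, and one checks that $i\mu$ does not even solve \eqref{eq:dbar-k.static}. Here I would follow Grinevich--Manakov. Rewriting $\bft(k)=\overline{\bft(-k)}$ as the symmetry $\kbar\,\bfs(k)=-k\,\overline{\bfs(-k)}$ of Definition \ref{def:xnp}, a computation parallel to the one above shows that the reflected conjugate $\nu(x,k):=\overline{\mu(x,-k)}$ solves the companion anti-analytic problem $\partial_k\nu=(\kbar/k)\,e_{-x}\bfs\,\overline\nu$ with $\nu\to1$, whose $1/\kbar$-coefficient is $-\overline{a_1}$, and that the reflected conjugate $\overline{w(x,-k)}$ of $w=D\mu$ solves this same companion problem with limit $\overline q$. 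The reconstruction attached to the $\mu$-problem thus yields $q=i\dbar_x a_1$, while that attached to the companion $\nu$-problem yields its reflected-conjugate counterpart; since both are built from the single symmetric datum $\bfs$, matching them (the inverse-problem analogue of the $x$-plane Stokes computation of Lemma \ref{lem:symmetry}, now run in the $k$-plane against the large-$k$ asymptotics) forces $q=\overline q$. This step — which repairs the gap noted in \cite{Siltanen:2014} and is where the symmetry hypothesis is indispensable — is the main obstacle; once it is in place, the uniqueness argument of the second paragraph completes the proof.
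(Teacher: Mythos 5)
Your first two paragraphs are sound, and they line up with what the paper does (partly implicitly): the computation showing that $w=\dbar_x(\partial_x+ik)\mu$ solves the same $\dbar_k$-equation \eqref{eq:dbar-k.static} as $\mu$ is correct, and, \emph{granting reality of $q$}, your identification $w=q\mu$ via Theorem \ref{thm:vanish} is exactly the right way to get the Schr\"odinger equation (in the paper's notation your $\eta$ is $-\chi_1=-P_1\mu$; the paper leaves this last step mostly implicit, so making it explicit is a genuine plus). Your companion-problem computation is also correct: under $\kbar\,\bfs(k)=-k\,\overline{\bfs(-k)}$ the function $\nu(x,k)=\overline{\mu(x,-k)}$ does solve $\partial_k\nu=(\kbar/k)\,e_{-x}\bfs\,\overline{\nu}$ with $1/\kbar$-coefficient $-\overline{a_1}$.

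The gap is your third paragraph, which is where the entire content of the lemma lives. Having reduced everything to reality of $q=i\dbar_x a_1$, you dispose of it with ``matching them \dots forces $q=\overline q$,'' but no mechanism for the matching is supplied, and none exists along the lines you indicate: the $\mu$-problem and the $\nu$-problem are genuinely different real-linear equations (note the extra factor $\kbar/k$), so no uniqueness theorem identifies solutions of one with solutions of the other, and knowing that the $\nu$-problem's coefficient is $-\overline{a_1}$ yields, by itself, no relation between $a_1$ and $\overline{a_1}$. What the paper actually does, following Grinevich--Manakov \cite{GM:1988}, has two ingredients absent from your sketch. First, a \emph{product} structure and an anchor at $k=0$: the one-form $\omega=\frac{\mu(x,k)\mu(x,-k)}{k}\,dk+\frac{\overline{\mu(x,k)\mu(x,-k)}}{\kbar}\,d\kbar$ is closed precisely because of the symmetry of $\bfs$, and Stokes' theorem on the annulus $R^{-1}\le|k|\le R$ (legitimate because $\bft$ vanishes near $0$, so $\mu$ is holomorphic in $k$ there) gives the pointwise identity $\mu(x,0)^2=\overline{\mu(x,0)}^2$. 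Second, a \emph{transfer from $k=0$ to $k=\infty$}: with $\chi_1=q\mu-w$ and $\chi_2=q\overline{\mu}-\overline{w}$, which satisfy the coupled system $\dbar_k\chi_1=e_{-x}\bfs\,\chi_2$, $\partial_k\chi_2=e_{x}\overline{\bfs}\,\chi_1$, a second closed form built from $\chi_1(x,k)\chi_1(x,-k)/k\,dk$ and $\chi_2(x,k)\chi_2(x,-k)/\kbar\,d\kbar$ yields $\chi_1(x,\infty)^2-\chi_2(x,\infty)^2=\chi_1(x,0)^2-\chi_2(x,0)^2=0$; since $\chi_1(x,\infty)=0$ while $\chi_2(x,\infty)=q-\overline{q}$, reality follows. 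Both the evaluation at $k=0$ and the use of products evaluated at $\pm k$ (rather than $\mu$ and $\nu$ separately) are essential, and neither appears in your argument; as written, the proof is incomplete at exactly the step you yourself flag as the main obstacle.
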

\begin{proof}
Consider the real differential form
\[
\omega=	\frac{\mu(x,k)\mu(x,-k)}{k} \, dk +
				\frac{\overline{\mu(x,k)\mu(x,-k)}}{\bar k} \, d\bar k.
\]
Using the symmetry $\bft(k)=\overline{\bft(-k)}$, and \eqref{eq:dbar-k.static}, it is not difficult to see that $\omega$ is a closed form. It now follows by Stokes' Theorem applied to the region $R^{-1} \leq |k| \leq R$, the large-$k$ asymptotic behavior of $\mu$ and $\bar\mu$ from \eqref{eq:largek-mu},  and the identities
\[\oint_\gamma \frac{dk}{k}=-\oint_\gamma \frac{d\bar k}{\bar k} = 2\pi i, \]
true for any simple closed contour $\gamma$, that
\begin{equation}
\label{zeroid}
\mu(x,0)^2 = \left[\overline{\mu(x,0)}\right]^2.
\end{equation}
Thus, $\mu(x,0)$ is either purely real or purely imaginary. 
Next consider the differential operators
\begin{eqnarray*}
P_1\psi &=&  -\dbar_x \left(\partial_x+ik\right) \psi+ q \psi \\
P_2\psi &=& -\partial_x   \left(\bar\partial_x -i\bar k\right) \psi + q\psi
\end{eqnarray*}
where $q$ is defined by the expansion \eqref{eq:largek-mu} to be $q=i\bar\partial_x a_1$. Let $\chi_1 = P_1\mu$ and $\chi_2=P_2\overline{\mu}$. We need to show $\chi_1=\chi_2 = 0$. From the expansion \eqref{eq:largek-mu}, we have
\begin{equation*}
\lim_{|k|\to \infty} \chi_1(x,k) = 0.
\end{equation*}
By \eqref{zeroid}, we have
\begin{equation}
\label{chizero}
\chi_1(x,0)^2 - \chi_2(x,0)^2 = 0,
\end{equation}
and $\chi_1$ and $\chi_2$ satisfy
\begin{eqnarray*}
\left(\dbar_k \chi_1\right)(x,k)  		&=& 	e_{-x}(k) {\bfs}(k)\chi_2(x,k)\\
\left(\partial_k  \chi_2\right)(x,k)  	&=&	e_{x}(k)  \overline{{\bfs}(k)} \chi_1(x,k).
\end{eqnarray*}
where, for each fixed $x$, 
\[\lim_{|k|\to\infty} \chi_2(x,k)=\lim_{|k|\to\infty} i\partial_x \bar a_1+q+O\left(|k|^{-1}\right) = i\partial_x \bar a_1(x)+q(x).\]
 It is not (yet) clear that $i\partial_x \bar a_1(x)+q(x)=0$ but we will prove this using the condition \eqref{chizero}.

To this end, consider  the one-form 
\[
\eta = \frac{\chi_1(x,k)\chi_1(x,-k)}{k}\, dk +\frac{\chi_2(x.k)\chi_2(x,-k)}{\bar k} \, d\kbar.
\]
A computation analogous to the one for $\omega$ together with \eqref{chizero} shows that $\eta$ is a closed form and that
\[
\chi_1(x,\infty)^2-\chi_2(x,\infty)^2 = \chi_1(x,0)^2-\chi_2(x,0)^2
\]
where $\chi_i(x,\infty)$ means $\lim_{|k| \to \infty} \chi_i(x,k)$ for $i=1,2$. 
By \eqref{chizero}, the right-hand side is zero, and also $\chi_1(x,\infty)=0$. We may conclude then that $\chi_2(x,\infty)=0$, thus $q=\overline{i\bar \partial_x a_1} = \overline{q}$ as desired.
\end{proof}
Combining the above result with Lemma \ref{lem:mapcont} we have:
\begin{corollary}
If $\bfs \in \calX_{2,r}(\bbC)$ for $r\in(1,2)$ and $\epsilon>0$ then $q(x)$ defined by equation \eqref{eq:q.recon} is real. Moreover, the solution $\mu(x,k)$ of the $\dbar$-problem \eqref{eq:dbar-k.static} satisfies
\[\dbar_x(\partial_x+ik)\mu(x,k)=q(x)\mu(x,k)\]
in the sense of distributions.
\label{cor:invsymmetry}
\end{corollary}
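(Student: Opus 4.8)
The plan is to deduce the Corollary from Lemma~\ref{lem:invsymmetry} by a density-and-continuity argument, approximating a general $\bfs \in \calX_{2,r}$ by scattering transforms to which Lemma~\ref{lem:invsymmetry} directly applies. First I would construct the approximating sequence. By Definition~\ref{def:xnp}, $\bfs$ is the $\calX_{2,r}$-limit of functions $g \in C_c^\infty(\bbC)$ obeying the symmetry $\kbar g(k) = -k\,\overline{g(-k)}$; each such $g$ I would multiply by a real, radial cutoff $1-\chi_\delta$, with $\chi_\delta \in C_c^\infty$ equal to $1$ on $\{|k|\le\delta\}$ and supported in $\{|k|\le 2\delta\}$. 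Multiplication by a real radial factor preserves the symmetry relation, and because the norm of $\calX_{2,r}$ weights its higher-exponent pieces by $|k|^2$ (which vanishes at the origin), dominated convergence gives $(1-\chi_\delta)g \to g$ in $\calX_{2,r}$ as $\delta\darr 0$. This yields a sequence $\bfs_j \in C_c^\infty(\bbC)$, each symmetric and vanishing near $k=0$, with $\bfs_j \to \bfs$ in $\calX_{2,r}$. Writing $\bft_j = \pi\kbar\,\bfs_j$, one checks that $\bft_j \in C_c^\infty(\bbC)$ vanishes near the origin and satisfies $\bft_j(k) = \overline{\bft_j(-k)}$, so Lemma~\ref{lem:invsymmetry} applies to each $j$: the reconstructed $q_j$ is real and the corresponding $\mu_j$ solves $\dbar_x(\partial_x+ik)\mu_j = q_j\mu_j$ in distribution sense.

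The reality of the limiting $q$ is then immediate. Fixing $\tau=0$, so that~\eqref{eq:dbar-k.static} is the $\tau=0$ slice of~\eqref{eq:dbar-k}, Lemma~\ref{lem:mapcont}(ii) with $m=|\alpha|=0$ and $n=2$ shows that $\bfs \mapsto q$ is continuous into $C^0$; hence $q_j \to q$ pointwise, and a pointwise limit of real-valued functions is real-valued.

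The substantive step -- and the one I expect to be the main obstacle -- is passing the Schr\"odinger equation to the limit, the difficulty being that Lemma~\ref{lem:mapcont}(i) provides convergence of the $x$-derivatives of $\mu_j$ in $L^{\tilde r}(\bbC)$ \emph{in the spectral variable $k$, uniformly for $x$ in compact sets}, whereas $\dbar_x(\partial_x+ik)\mu=q\mu$ is a statement \emph{in $x$ for each fixed $k$}. To bridge this I would test against an arbitrary $\varphi\in C_c^\infty(\bbR^2)$ in the $x$-variable. For $|\alpha|\le 2$ (allowed since $n=2\ge|\alpha|$) Lemma~\ref{lem:mapcont}(i) gives $\sup_{x\in\operatorname{supp}\varphi}\norm{(D^\alpha_x\mu_j-D^\alpha_x\mu)(x,\cdot)}_{L^{\tilde r}(\bbC)}\to 0$, so Minkowski's integral inequality yields
\[\norm{\int (D^\alpha_x\mu_j-D^\alpha_x\mu)(x,\cdot)\,\varphi(x)\,dm(x)}_{L^{\tilde r}(\bbC)} \le \norm{\varphi}_{L^1}\,\sup_{x\in\operatorname{supp}\varphi}\norm{(D^\alpha_x\mu_j-D^\alpha_x\mu)(x,\cdot)}_{L^{\tilde r}(\bbC)}\rarr 0.\]
Combined with $q_j\to q$ uniformly on $\operatorname{supp}\varphi$ from Lemma~\ref{lem:mapcont}(ii), this shows that both the tested left-hand side $\int(\dbar_x\partial_x\mu_j+ik\,\dbar_x\mu_j)\varphi\,dm(x)$ and the tested right-hand side $\int q_j\mu_j\varphi\,dm(x)$ converge in $L^{\tilde r}_{\mathrm{loc}}(\bbC)$ as functions of $k$ (the coefficient $ik$ being locally bounded). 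Since these agree for every $j$, their limits agree, so the equation tested against $\varphi$ holds for almost every $k$; as the pairings are continuous in $k$ on $\bbC\setminus\{0\}$, it then holds for every $k\ne 0$. Since $\varphi$ was arbitrary, $\dbar_x(\partial_x+ik)\mu=q\mu$ in distribution sense, as claimed. The points that will need care are the verification that the $x$-moments retain enough continuity in $k$ to upgrade the almost-everywhere identity to every $k\ne 0$, and the bookkeeping of the $\tau=0$ specialization of Lemma~\ref{lem:mapcont}.
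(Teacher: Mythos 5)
Your proposal is correct and takes essentially the same route as the paper: the paper's proof of Corollary~\ref{cor:invsymmetry} is exactly the density-and-continuity argument you describe, combining Lemma~\ref{lem:invsymmetry} (applied to symmetric $C_c^\infty$ approximants vanishing near $k=0$, which Definition~\ref{def:xnp} supplies after your cutoff) with the continuity statements of Lemma~\ref{lem:mapcont}. Your explicit cutoff construction and the tested passage to the limit simply fill in details the paper leaves implicit in the phrase ``combining the above result with Lemma~\ref{lem:mapcont}.''
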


It is now simple to prove the interesting fact that all scattering transforms in $\calX_{n,r}$ come from critical or subcritical potentials.

\begin{proposition}
If $\bfs\in \calX_{2,r}$ then $q(x)=[\mathcal Q \bft](x)$ is critical or subcritical.
\label{prop:critorsubcrit}
\end{proposition}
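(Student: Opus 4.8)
The plan is to produce a \emph{positive} distributional solution of the zero-energy Schr\"odinger equation $-\dbar_x\dee_x\psi + q\psi = 0$ for the reconstructed potential $q$. By the criterion quoted after Definition \ref{def:murata} (\cite[Theorem 2.12]{Schro-Ops}) the existence of such a solution forces $-\dbar_x\dee_x + q \geq 0$, and Murata's trichotomy then excludes the supercritical case, leaving $q$ critical or subcritical. The candidate is $\psi(x) = \mu(x,0)$: Corollary \ref{cor:invsymmetry} already tells us that $q$ is real and that $\mu(x,k)$ satisfies $\dbar_x(\dee_x + ik)\mu = q\mu$ distributionally, and setting $k = 0$ gives exactly the Schr\"odinger equation. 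Hence the real content of the proposition is the reality and strict positivity of $\mu(x,0)$, together with the passage from smooth data to a general element of $\calX_{2,r}$.

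First I would reduce to nice scattering data. Since $\calX_{2,r}$ is by definition the closure of symmetric $C_c^\infty$ functions, and since multiplying by a radial cutoff vanishing near $k=0$ preserves the relation $\bar k f(k) = -k\overline{f(-k)}$ and costs an arbitrarily small amount in the $\calX_{2,r}$ norm, I can approximate $\bfs$ by data $\bfs_n$ with $\bft_n \in C_c^\infty$ and $\bft_n \equiv 0$ near the origin. Crucially, $n=2$ is exactly the regularity needed for Lemma \ref{lem:mapcont}(ii) (with $m=|\alpha|=0$), so the reconstructed potentials satisfy $q_n \to q$ uniformly on $\bbR^2$. For each fixed test function the quadratic form inequality \eqref{q.ps} passes to uniform limits, so the set of potentials with $-\dbar_x\dee_x+q\geq 0$ is closed under this convergence; it therefore suffices to prove the proposition for the nice data $\bfs_n$. (For such data the singularity of $\bfs$ at $k=0$ disappears, so $\mu(x,\cdot)$ is analytic near $k=0$ and $\mu(x,0)$ is genuinely well-defined --- a point that is delicate for general subcritical data.)

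For nice data, Lemma \ref{lem:invsymmetry} supplies both the Schr\"odinger equation and the identity \eqref{zeroid}, $\mu(x,0)^2 = \overline{\mu(x,0)}^2$, which says $\mu(x,0)$ is pointwise either real or purely imaginary. To upgrade this to strict positivity I would use the scaling homotopy $\bft_\lambda = \lambda\bft$, $\lambda\in[0,1]$, which keeps the data nice and symmetric, and study $A = \{\lambda : \mu_\lambda(\cdot,0) > 0 \text{ on } \bbR^2\}$. At $\lambda=0$ the data vanish and $\mu_0\equiv 1$, so $0\in A$. Openness of $A$ follows from the continuity of $\lambda\mapsto \mu_\lambda(\cdot,0)$ (Lemma \ref{lem:mapcont}) together with the normalization $\mu_\lambda(x,0)\to 1$ as $|x|\to\infty$, uniformly for $\lambda$ in compact sets --- the oscillatory factor $e_{-x}$ in the reconstruction formula forces this by Riemann--Lebesgue, uniformly since the symbols $\lambda\bfs$ share a fixed compact support away from $0$. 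For closedness, if $\lambda_n\to\lambda$ with $\mu_{\lambda_n}(\cdot,0)>0$, then $\mu_\lambda(\cdot,0)$ is real and $\geq 0$ as a pointwise limit, is not identically zero (it tends to $1$ at infinity), and solves $-\dbar_x\dee_x\psi+q_\lambda\psi=0$; the Harnack inequality / strong maximum principle for the Schr\"odinger operator (valid since $q_\lambda$ is smooth, hence in Kato class) then forces $\mu_\lambda(\cdot,0)>0$, so $\lambda\in A$. Thus $A=[0,1]$ and in particular $\mu(x,0)>0$.

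The main obstacle is precisely this strict positivity, i.e.\ the non-vanishing of $\mu(x,0)$: reality via \eqref{zeroid} and the equation via Corollary \ref{cor:invsymmetry} are essentially in hand, but a direct argument that $\mu(x,0)$ never vanishes is not transparent. The homotopy converts this into two softer inputs --- a uniform-in-$\lambda$ asymptotic $\mu_\lambda(x,0)\to 1$ (for openness) and the strong maximum principle (for closedness) --- so the genuine work lies in proving those uniform asymptotics and in verifying that $q_\lambda$ is regular enough to apply Harnack.
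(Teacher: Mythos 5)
Your proposal follows the paper's skeleton almost exactly in four of its five steps: truncation of the data away from $k=0$ (the paper uses $\bft_\epsilon=\chi_\epsilon\bft$, precisely your cutoff), reality of $\mu(x,0)$ via \eqref{zeroid} and the Schr\"odinger equation via Corollary \ref{cor:invsymmetry}, the implication ``positive solution $\Rightarrow$ nonnegative form'' via \cite[Theorem 2.12]{Schro-Ops}, and passage to the limit in the quadratic form \eqref{q.ps} using Lemma \ref{lem:mapcont}(ii) (the paper records only locally uniform convergence $q_\epsilon\to q$, which suffices against compactly supported test functions; your claim of uniform convergence on all of $\bbR^2$ is more than that lemma literally gives, but this is immaterial). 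Where you genuinely diverge is the one step you correctly identify as the crux: non-vanishing of $\mu_\epsilon(x,0)$. The paper disposes of it by citation --- Nachman \cite[Theorem 4.1]{Nachman:1996} gives $\inf_{x,k}|\mu_\epsilon(x,k)|>0$ for such truncated data, after which multiplication by a constant produces a positive solution --- whereas you build positivity from scratch with the homotopy $\bft_\lambda=\lambda\bft$, openness/closedness of $A=\{\lambda:\mu_\lambda(\cdot,0)>0\}$, and Harnack's inequality. Your route is more self-contained, and the connectedness-in-$\lambda$ argument also settles the real-versus-imaginary dichotomy of \eqref{zeroid} for free (the paper handles it by the constant $c$), but it converts a one-line citation into a real piece of analysis resting on two inputs you only sketch: (a) sup-norm-in-$x$ continuity of $\lambda\mapsto\mu_\lambda(\cdot,0)$, which does not follow from Lemma \ref{lem:mapcont} as stated, since that lemma controls $\mu(x,\cdot)-1$ only in $L^{\tilde r}(\bbC)$ and pointwise evaluation at $k=0$ is not an $L^{\tilde r}$-continuous functional (it is rescuable exactly because the truncated data vanish near $k=0$, so $k\mapsto\mu(x,k)$ is analytic there and the Cauchy integral in \eqref{eq:Txtau} is nonsingular at $k=0$); and (b) the decay $\mu_\lambda(x,0)\to 1$ as $|x|\to\infty$, uniformly in $\lambda$, which is provable by writing $\mu_\lambda(x,\cdot)-1=(I-T_{x,0})^{-1}T_{x,0}1$, showing $\norm{T_{x,0}1}_{L^{\tilde r}}\to 0$ as $|x|\to\infty$ (Riemann--Lebesgue pointwise in $k$, dominated by $P|\bfs|\in L^{\tilde r}$), and invoking the uniform resolvent bounds of Lemmas \ref{lemma:Txtau} and \ref{lem:op-cont}. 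So the assessment is: same overall decomposition as the paper, a valid but different and heavier treatment of the key non-vanishing step, and --- as you acknowledge --- the proof is a correct plan with fillable gaps rather than a finished argument; the paper's citation buys brevity, your homotopy buys independence from Nachman's hypotheses.
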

\begin{proof}
 This result follows easily using an approximation argument. Define $q_\epsilon(x) = \mathcal Q(\bft_\epsilon)(x)$ where $\bft_\epsilon(k) = \chi_\epsilon(k)\bft(k)$ and $\chi_\epsilon$ is a smooth radial function which is $1$ for $|k|>\epsilon$ and $0$ for $|k|<\epsilon/2$. Nachman \cite[Theorem 4.1]{Nachman:1996} proves that the solutions $\mu_\epsilon(x,k)$ to equation \eqref{eq:dbar-k} satisfy $\inf_{x,k}|\mu_\epsilon(x,k)|>0$ and from \eqref{zeroid} $\mu_\epsilon(x,0)$ is either real or imaginary. By multiplying this by the appropriate constant and using Corollary \ref{cor:invsymmetry}, we have $c\mu_\epsilon(k,0)$ is a positive solution to the Schr\"odinger equation with potential $q_\epsilon$. By Lemma \ref{lem:mapcont} $q_\epsilon\in C(\mathbb R^2)$, so by \cite[Theorem 2.12]{Schro-Ops} $q_\epsilon$ is critical or subcritical. By Lemma \ref{lem:mapcont}, on any compact subset $\Omega\subset \mathbb R^2$ we have $\lim_{\epsilon\to 0}|q_\epsilon(x)-q(x)| = 0$.  By Definition \ref{def:murata}, a potential $q$ is either critical or subcritical if the associated quadratic form \eqref{q.ps} is positive. That is, for every $q_\epsilon$ and any $\psi \in C^\infty_c(\mathbb R^2)$
\[\int_{\mathbb R^2} |\partial_x \psi|^2+q_\epsilon|\psi|^2\,dm(x) \geq 0.\]
Taking limits preserves the non-negativity and the result follows.
\end{proof}

The following identities are a consequence of $\mu(x,k,\tau)$ solving 
\[\dbar_x(\partial_x+ik)\mu=q\mu\]
point-wise as a map $k\to L^{r}$ and the large-$k$ expansion \eqref{eq:largek-mu}.

\begin{corollary}
Assume $\bfs\in \calX_{n+1,r}$ and define $q = i\dbar_x a_1$.  Then, the following identities hold:
\begin{equation}
i\dbar_x a_{n} = -\dbar_x\partial_x a_{n-1}+(i\dbar_x a_1)a_{n-1},
\label{eq:mu-identities}
\end{equation}
which for $n=2$ simplifies to
\begin{equation}
i\dbar_x a_{2}=\dbar_x\left(-\partial_x a_1+i\frac{a_1^2}{2}\right).
\label{eq:dbar-a1}
\end{equation}
Additionally we have
\begin{equation}
i\partial_x a_{2} = \partial_x\left(-\partial_x a_1+i\frac{a_1^2}{2}\right).
\label{eq:d-a1}
\end{equation}
\label{cor:mu-identities}
\end{corollary}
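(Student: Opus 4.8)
The plan is to feed the large-$k$ expansion \eqref{eq:largek-mu} into the identity $\dbar_x(\partial_x+ik)\mu=q\mu$ and read off relations among the coefficients $a_j$ by matching powers of $k$. Because the corollary allows $\bfs\in\calX_{n+1,r}$, whose elements may be mildly singular at $k=0$, I would first prove all three identities for scattering data $\bfs\in C^\infty_c(\bbC)$ vanishing near $k=0$; for such data $\mu(x,\cdot,\tau)$ is smooth, each $a_j(x)$ is a smooth, decaying function of $x$, and \eqref{eq:largek-mu} may be differentiated in $x$ term by term to the needed order (Lemma \ref{lem:mu-diff}). I would then recover the general case by density together with the continuity of $\bfs\mapsto D^\alpha_x a_j$ furnished by the results of Section \ref{sec.continuous} (Lemma \ref{lem:mapcont}). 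The reality of $q$ from Corollary \ref{cor:invsymmetry} guarantees throughout that $q=i\dbar_x a_1$ is the genuine reconstructed potential.

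For \eqref{eq:mu-identities} I would rewrite the equation as $\dbar_x\partial_x\mu+ik\,\dbar_x\mu=q\mu$ and substitute $\mu=1+\sum_{j\ge1}a_j k^{-j}+o(|k|^{-n})$. The shift caused by the factor $k$ sends the coefficient of $k^{-m}$ in $ik\,\dbar_x\mu$ to $i\dbar_x a_{m+1}$, while $\dbar_x\partial_x\mu$ and $q\mu$ contribute $\dbar_x\partial_x a_m$ and $qa_m$ respectively; the order $k^0$ balance returns the definition $q=i\dbar_x a_1$, and matching the coefficient of $k^{-(n-1)}$ gives exactly \eqref{eq:mu-identities}. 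Taking $n=2$ and rewriting the product term through $qa_1=(i\dbar_x a_1)a_1=\dbar_x(\tfrac{i}{2}a_1^2)$ produces \eqref{eq:dbar-a1}.

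The identity \eqref{eq:d-a1} is where a genuinely new step is needed, and I expect it to be the main obstacle: the equation has the shape $\dbar_x(\cdots)=q\mu$, so matching powers can only ever produce identities with an \emph{outer} $\dbar_x$, never the outer $\partial_x$ of \eqref{eq:d-a1}. My resolution is to read \eqref{eq:dbar-a1} as the statement that $G:=i a_2+\partial_x a_1-\tfrac{i}{2}a_1^2$ satisfies $\dbar_x G=0$. For the reduced data above, $G$ is smooth on $\bbR^2\cong\bbC$, so $\dbar_x G=0$ forces $G$ to be an entire function of $x=x_1+ix_2$; since $a_1,\,a_2,\,\partial_x a_1\to0$ as $|x|\to\infty$ (by Riemann--Lebesgue applied to the oscillatory integrals $a_j=\tfrac1\pi\int\zeta^{\,j-1}e_{-x}(\zeta)\bfs(\zeta)\overline{\mu}\,dm(\zeta)$ supplied by Lemma \ref{lemma:P3}), we have $G\to0$. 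Liouville's theorem then gives $G\equiv0$, and applying $\partial_x$ yields \eqref{eq:d-a1}. The identity $\partial_x G=0$ passes to all $\bfs\in\calX_{n+1,r}$ by the density and continuity argument of the first paragraph.

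The steps requiring the most care are the justification that \eqref{eq:largek-mu} may be differentiated in $x$ and matched coefficient-by-coefficient to the stated order — controlled by Lemma \ref{lem:mu-diff} and the integral representation of the $a_j$ — and the verification of the decay $G\to0$ and of the continuity of the maps $\bfs\mapsto\partial_x a_2,\ \partial_x^2 a_1,\ a_1\partial_x a_1$ needed to run the approximation. Once these are in place, $G\equiv0$ in fact gives the stronger algebraic relation $i a_2=-\partial_x a_1+\tfrac{i}{2}a_1^2$, of which both \eqref{eq:dbar-a1} and \eqref{eq:d-a1} are immediate consequences.
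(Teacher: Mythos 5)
Your overall route is exactly the paper's: identities \eqref{eq:mu-identities} and \eqref{eq:dbar-a1} come from substituting the expansion \eqref{eq:largek-mu} into $\dbar_x(\partial_x+ik)\mu=q\mu$ and matching powers of $k$, and \eqref{eq:d-a1} comes from applying Liouville's theorem to the $\dbar_x$-analytic function $G=ia_2+\partial_x a_1-\tfrac{i}{2}a_1^2$. The density scaffolding you set up is also consistent with how the paper operates, since the distributional Schr\"odinger equation for general $\bfs\in\calX_{n+1,r}$ is itself obtained in Corollary \ref{cor:invsymmetry} by just such an approximation argument.

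The one step that does not hold up as written is the claim that $G\to 0$ as $|x|\to\infty$ ``by Riemann--Lebesgue'' applied to $a_j(x)=\tfrac{1}{\pi}\int\zeta^{j-1}e_{-x}(\zeta)\bfs(\zeta)\overline{\mu(x,\zeta)}\,dm(\zeta)$. These are not Fourier transforms of fixed $L^1$ functions: the factor $\overline{\mu(x,\zeta)}$ depends on $x$, so Riemann--Lebesgue does not apply directly. Writing $\overline{\mu}=1+\overline{(\mu-1)}$ handles the first piece, but the remainder is only controlled by $\|\zeta^{j-1}\bfs\|_{L^{\tilde{r}'}}\,\|\mu(x,\cdot)-1\|_{L^{\tilde{r}}}$, which is uniformly bounded (Lemma \ref{lemma:Txtau}) but not obviously decaying; genuine decay of such reconstructed quantities requires the stationary-phase-type estimates behind Lemma \ref{lemma:Txtau} or the machinery of \cite{LMS:2007}, and indeed the paper lists decay questions of this kind as open for subcritical data. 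Fortunately the decay is unnecessary: uniform boundedness of $a_1$, $a_2$, and $\partial_x a_1$ already makes $G$ a bounded entire function of $x$, hence a \emph{constant} by Liouville, and since \eqref{eq:dbar-a1} and \eqref{eq:d-a1} involve only derivatives of $G$, the constant is harmless --- this is precisely the paper's argument. So your proof is correct once ``$G\equiv 0$ by decay'' is replaced by ``$G$ is constant by boundedness''; only the stronger pointwise relation $ia_2=-\partial_x a_1+\tfrac{i}{2}a_1^2$ advertised at the end is left unestablished, and it is not needed anywhere.
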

\begin{proof}
Identity \eqref{eq:mu-identities} follows by plugging in the large-$k$ expansion \eqref{eq:largek-mu} into \eqref{mu.schro}. Equation \eqref{eq:dbar-a1} can be rearranged to 
\[\dbar_x\left(ia_2+\dee_x a_1-i\frac{a_1^2}{2}\right)=0.\]
 We then get identity \eqref{eq:d-a1} by applying Liouville's theorem to the analytic function $ia_2+\partial_x a_1-i\frac{a_1^2}{2}$ and noting that $a_1$, $a_2$, and $\partial_x a_1$ are bounded. 
\end{proof}

\section{Solving the Novikov-Veselov equation}
\label{sec.solving}

In this section, we show that $q(x,\tau)$ defined by \eqref{ISM} solves the NV equation at zero energy. We will assume $\bfs\in \calX_{5,r}$.  Note that, for any $\bfs$ of this form, the function $q(x,\tau)$ defined by \eqref{ISM} is real by Corollary \ref{cor:invsymmetry} and the fact that the map $\bft \mapsto e^{i\tau(k^3+\kbar^3)}\bft$ preserves the symmetry $\bft(k)=\overline{\bft(-k)}$ and the norms. For such transforms, we will prove that $q(x,\tau)$ does indeed solve the NV equation. It will then follow from Lemma \ref{lemma.scattering} that the inverse scattering method produces a global solution for critical or subcritical initial data $q_0$ with $q_0 \in W_{\rho}^{5,p}(\bbR^2)$ for some $p \in (1,2)$, and $\rho>1$.

 This section exploits ideas from Section 3 of the paper of Croke, Mueller, Music, Perry, Siltanen, and Stahel \cite{CMMPSS:2014}, where the Manakov triple representation of the NV equation \cite{Manakov:1976} is used to study the inverse scattering method. In order to show that $q(x,\tau)$ as defined by \eqref{ISM} in fact solves the NV equation, we need an equation of motion for $\mu(x,k,\tau)$.

We will show directly from the $\dbar_k$-equation for $\mu$ that $\mu(x,k,\tau)$ satisfies an explicit equation of 
motion. It then becomes a matter of careful computation to show that the time derivative of \eqref{ISM} satisfies the 
NV equation.

\begin{lemma}
If $\bfs\in \calX_{4,r}$ for $r\in(1,2)$ and $\epsilon>0$ then the function $\mu(x,k,\tau)$ defined by \eqref{eq:dbar-k} satisfies the equation
\begin{equation*}
(\dee_\tau-ik^3)\mu=\left[\dbar_x^3+(\partial_x+ik)^3 -3u(\partial_x+ik)-3\ubar \dbar_x \right]\mu(x,k)
\end{equation*}
where $u=i\partial_x a_1$.
\end{lemma}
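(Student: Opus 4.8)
The plan is to introduce the spatial operator
\[
L = \dbar_x^3 + (\partial_x+ik)^3 - 3u(\partial_x+ik) - 3\ubar\,\dbar_x, \qquad u = i\partial_x a_1,
\]
set $\Psi = (\partial_\tau - ik^3)\mu - L\mu$, and prove that $\Psi$ solves the \emph{same} $\dbar_k$-problem as $\mu$, namely $\dbar_k\Psi = e_{-x}(k)\,\bfs\,\overline{\Psi}$, while decaying at infinity. Since this is an equation $\dbar\Psi = a\Psi + b\overline{\Psi}$ with $a=0$ and $b = e_{-x}(k)\bfs \in L^2(\bbC)$ (because $|e_{-x}(k)|=1$ and $\bfs\in\calX_{4,r}\subset L^2$), the Liouville-type Theorem \ref{thm:vanish} forces $\Psi\equiv 0$, which is the assertion.

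The structural heart is a pair of duality identities for $E := e_{-x}(k) = \exp(-i(kx+\kbar\xbar))$. A Wirtinger computation gives $\partial_x E = -ikE$ and $\dbar_x E = -i\kbar E$, whence for any smooth $f$
\begin{align*}
(\partial_x+ik)(E\overline{f}) &= E\,\overline{\dbar_x f}, \\
\dbar_x(E\overline{f}) &= E\,\overline{(\partial_x+ik)f}.
\end{align*}
Thus conjugation by $E$ interchanges $\dbar_x$ and $(\partial_x+ik)$; iterating yields $\dbar_x^3(E\overline{f}) = E\,\overline{(\partial_x+ik)^3 f}$ and $(\partial_x+ik)^3(E\overline{f}) = E\,\overline{\dbar_x^3 f}$. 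Because in $L$ the coefficient $u$ is paired with $(\partial_x+ik)$ and $\ubar$ with $\dbar_x$, these combine into the single relation $L(E\overline{f}) = E\,\overline{Lf}$; that is, $L$ is self-dual under $E$-conjugation. This is precisely the step in which the form of the NV operator — the coefficients $-3u$, $-3\ubar$ and their placement — is essential.

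With this I carry out two commutation computations. Since $\dbar_k k = 0$ and $u,\ubar$ are independent of $k$, $\dbar_k$ commutes with $\dbar_x$, with $(\partial_x+ik)$, and with multiplication by $u,\ubar$; using $\dbar_k\mu = E\bfs\overline{\mu}$ and that $\bfs=\bfs(k,\tau)$ is $x$-independent gives $\dbar_k(L\mu) = L(E\bfs\overline{\mu}) = \bfs\,L(E\overline{\mu}) = E\bfs\,\overline{L\mu}$. For the time part, differentiating $\dbar_k\mu = E\bfs\overline{\mu}$ in $\tau$ and using $\partial_\tau\bfs = i(k^3+\kbar^3)\bfs$, the term $\dbar_k(ik^3\mu)=ik^3E\bfs\overline{\mu}$ cancels the $ik^3$ piece and $i\kbar^3$ recombines with $\overline{\partial_\tau\mu}$, so that $\dbar_k\bigl((\partial_\tau-ik^3)\mu\bigr) = E\bfs\,\overline{(\partial_\tau - ik^3)\mu}$ via $i\kbar^3\overline{\mu}=\overline{-ik^3\mu}$. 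Subtracting shows $\Psi$ solves the homogeneous $\dbar_k$-problem.

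It remains to show $\Psi\in L^{\tilde r}(\bbC)\cap L^2_{\mathrm{loc}}(\bbC)$, and this is the step I expect to require the most care. Inserting the expansion $\mu = 1 + a_1/k + a_2/k^2 + a_3/k^3 + o(|k|^{-3})$ (valid with its derivatives since $\bfs\in\calX_{4,r}$, by Lemma \ref{lem:mu-diff} and the subsequent expansion lemma) into $\Psi$, the $k^3$ and $k^2$ coefficients cancel identically, the $k^1$ coefficient collapses to $3\partial_x a_1 + 3iu$, which vanishes because $u=i\partial_x a_1$, and the $k^0$ coefficient collapses to $3(\partial_x a_2 - i\partial_x^2 a_1 - a_1\partial_x a_1)$, which vanishes by identity \eqref{eq:d-a1} of Corollary \ref{cor:mu-identities} (the $a_3$ contributions cancel automatically, so $a_3$ is never needed). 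Hence $\Psi = O(|k|^{-1})$ as $|k|\to\infty$, which lies in $L^{\tilde r}(\bbC)$ for $\tilde r>2$, while near $k=0$ the $L^{\tilde r}$ bounds of Lemma \ref{lem:mu-diff} give $\Psi\in L^2_{\mathrm{loc}}$, so Theorem \ref{thm:vanish} yields $\Psi\equiv 0$. The main obstacle is the bookkeeping in tracking every non-negative power of $k$ through the third-order operators and verifying that the nonlinear relations among $a_1,a_2$ from Corollary \ref{cor:mu-identities} exactly cancel them. Should the mild singularity of $\bfs$ at the origin in the subcritical case impede these pointwise manipulations, I would first establish the identity for $\bfs\in C^\infty_c(\bbC)$ obeying the reality symmetry and then pass to general $\bfs\in\calX_{4,r}$ using the continuity of $(x,\tau)\mapsto\partial_\tau^m D_x^\alpha\mu$ in $\bfs$ from Lemma \ref{lem:mapcont}.
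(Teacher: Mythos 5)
Your proposal is correct and follows essentially the same route as the paper: define $\Psi$ as the difference of the two sides, show by commuting the (specially paired) operators through the $\dbar_k$-equation that $\Psi$ satisfies $\dbar_k\Psi = e_{-x}\bfs\overline{\Psi}$, verify via the large-$k$ expansion, the choice $u=i\partial_x a_1$, and identity \eqref{eq:d-a1} that $\Psi=O(|k|^{-1})$, and conclude $\Psi\equiv 0$ from Theorem \ref{thm:vanish}. Your explicit $E$-conjugation identity $L(E\overline{f})=E\,\overline{Lf}$ is just a unified (and correctly signed) writing of the paper's three commutation identities, so the two arguments coincide in substance.
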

\begin{proof}
We have the two equations \[\dbar_x(\partial_x+ik)\mu(x,k)=q(x)\mu(x,k)\]
and 
\[\dbar_k \mu(x,k,\tau) = {\bfs}(k,\tau)e_{-x}(k)\overline{\mu}\]
Expanding $\mu(x,k,\tau)$ up to order $k^3$, we have
\begin{equation}
\mu(x,k,\tau)= 1+\frac{a_1(x,\tau)}{k}+\frac{a_2(x,\tau)}{k^2}+ \frac{a_3(x,\tau)}{k^3}+ o(k^{-3}).
\label{eq:order3-mu}
\end{equation}
By commuting the following specially chosen differential operators through the $\bar\partial_k$ equation, we obtain the three identities
\[
	\dbar_k(\dee_\tau -ik^3)\mu = 
			{\bfs}(k,\tau)e_{-x}(k)\overline{(\dee_\tau -ik^3)\mu},
\]
\[
	\dbar_k[\dbar_x^3 +(\partial_x +ik)^3]\mu = 
			{\bfs}(k,\tau)e_{-x}(k)\overline{\left[\dbar_x^3 +(\partial_x +ik)^3\right]\mu}
,\]
and
\[
	\dbar_k[-3\ubar \dbar_x -3u(\partial_x +ik)]\mu = 
			{\bfs}(k,\tau)e_{-x}(k)\overline{\left[-3\ubar \dbar_x-3u(\partial_x +ik)\right]\mu}.
\]
We can combine these in such a way to get a formula for $\dee_\tau \mu$. Adding the three identities we conclude that
\[
	\Psi=\left[
		\dee_\tau -(\dbar_x^3+\partial_x^3+3ik\partial_x^2-3k^2\partial_x-3u(\partial_x+ik)-3\ubar\dbar_x )				\right]\mu 
\]
satisfies
\[\dbar_k \Psi = {\bfs}(k,\tau)e_{-x}(k) \overline{\Psi}.\]
We will prove that $\Psi=O(k^{-1})$ so that Theorem \ref{thm:vanish} implies that 
$\Psi\equiv 0$.
Plugging \eqref{eq:order3-mu} into the expression for $\Psi$, it is easy to see that we get no terms of order $k^2$ or higher. Collecting all terms of order $k^1$ we find
\[-3k\partial_x a_1-3iku = 0\]
which is zero by our choice of $u$.
Collecting terms of order $k^0$ gives us
\begin{equation}
3i\partial_x^2 a_1 -3\partial_x a_2-3iu a_1 = 3i \partial_x \left[\partial_x a_1+ia_2-\frac{i}{2}a_1^2\right].
\label{eq:orderk}
\end{equation}
which is zero by identity \eqref{eq:d-a1}. Thus, the order $k^0$ terms in equation \eqref{eq:orderk} are zero. This finishes the proof that $\Psi\equiv 0$.
\end{proof}

We are now ready to prove that $q(x,\tau)=i\bar \partial_x a_1(x,\tau)$ solves the Novikov-Veselov equation. This follows by looking at the terms of order $k^{-1}$ after plugging in the asymptotic expansion into the evolution equation for $\mu$ and using the identities from Corollary \ref{cor:mu-identities}.
\begin{corollary}
\label{cor:solveNV}
If $\bfs\in \calX_{5,r}$ then the function $q(x,\tau)$ satisfies
\[\dee_\tau q = \dbar_x^3q+\partial_x^3q-3\partial_x( uq)-3\dbar_x(\ubar q).\]
\end{corollary}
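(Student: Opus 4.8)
The plan is to substitute the large-$k$ expansion \eqref{eq:largek-mu} into the equation of motion for $\mu$ established in the preceding lemma and to read off the coefficient of $k^{-1}$. Under the hypothesis $\bfs\in\calX_{5,r}$, the expansion $\mu = 1 + a_1/k + a_2/k^2 + a_3/k^3 + a_4/k^4 + o(|k|^{-4})$ is valid and may be differentiated once in $\tau$ and once in $x$, which is exactly what is needed to form $\dee_\tau q = i\dbar_x \dee_\tau a_1$. First I would expand $(\partial_x + ik)^3 = \partial_x^3 + 3ik\partial_x^2 - 3k^2\partial_x - ik^3$ and group the right-hand side of the equation of motion by powers of $k$. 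Matching the $k^{-1}$ coefficients on both sides, the two terms proportional to $a_4$ (namely the common contribution $-ia_4$ arising from $-ik^3\mu$ on the left and from the $-ik^3$ piece on the right) cancel identically, so that $a_4$ never enters. What survives is the evolution equation
\begin{equation*}
\dee_\tau a_1 = \bigl(\dbar_x^3 + \partial_x^3 - 3u\partial_x - 3\ubar\dbar_x\bigr)a_1 + 3i\partial_x^2 a_2 - 3iu\, a_2 - 3\partial_x a_3.
\end{equation*}

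Next I would apply $i\dbar_x$ to both sides, using $q = i\dbar_x a_1$. The two linear terms yield $\dbar_x^3 q + \partial_x^3 q$ at once, since $i\dbar_x$ commutes with $\dbar_x^3$ and $\partial_x^3$. The term $-3\ubar\dbar_x a_1$ is also immediate, because $i\dbar_x(-3\ubar\dbar_x a_1) = -3\dbar_x(\ubar\, i\dbar_x a_1) = -3\dbar_x(\ubar q)$, which is already one of the two nonlinear terms of NV. It therefore remains only to show that the four lower-order terms assemble into the other nonlinear term, i.e.
\begin{equation*}
i\dbar_x\bigl[-3u\partial_x a_1 + 3i\partial_x^2 a_2 - 3iu\, a_2 - 3\partial_x a_3\bigr] = -3\partial_x(uq).
\end{equation*}

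For this identity I would use the relations of Corollary \ref{cor:mu-identities} together with the commutation relation $\dbar_x u = \partial_x q$, which follows from $u = i\partial_x a_1$, $q = i\dbar_x a_1$, and $\dbar_x\partial_x = \partial_x\dbar_x$. Concretely, \eqref{eq:dbar-a1} gives $\dbar_x a_2 = \partial_x q - i a_1 q$, \eqref{eq:d-a1} gives $\partial_x a_2 = \partial_x u - i a_1 u$, and \eqref{eq:mu-identities} at $n=3$ gives $i\dbar_x a_3 = -\dbar_x\partial_x a_2 + q a_2$. The crucial point is that the term $3i\partial_x^2 a_2$ and the term $-3\partial_x a_3$ each generate an apparent third-order contribution proportional to $\partial_x^3 q$ (and a second-order contribution proportional to $\partial_x^2(a_1 q)$), and in both cases these are equal and opposite, so no derivative of order higher than one survives. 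After these cancellations the remaining pieces collapse: using the product rule, $\dbar_x u = \partial_x q$, and the two derivative identities for $a_2$, the surviving terms reduce to $-3(\partial_x u)q - 3u\partial_x q = -3\partial_x(uq)$, as required.

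The routine part is the first paragraph, the bookkeeping of powers of $k$. The genuine obstacle is the third paragraph, where the main risk is sign and numerical-coefficient errors in the chain of substitutions. Two structural features, which I would isolate explicitly in the writeup, are what make the computation close: first, $a_4$ drops out of the $k^{-1}$ balance, so the argument needs the expansion controlled only through $a_3$ (with a mere error estimate for the $a_4$-term); and second, the third- and second-order derivative terms produced by the $a_2$- and $a_3$-contributions annihilate one another, leaving exactly the first-order divergence form $-3\partial_x(uq)$ demanded by NV.
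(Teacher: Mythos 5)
Your proposal is correct and takes essentially the same route as the paper: expand the equation of motion from the preceding lemma at order $k^{-1}$ (where the $a_4$-terms indeed cancel via the $-ik^3\mu$ contributions), apply $i\dbar_x$ with $q=i\dbar_x a_1$, and close the computation using the identities of Corollary \ref{cor:mu-identities}, your cancellation of the apparent third- and second-order terms being exactly the paper's cancellation $-3\partial_x^2\dbar_x a_2+3\partial_x^2\dbar_x a_2$ after substituting $i\dbar_x a_3=-\dbar_x\partial_x a_2+qa_2$. One small imprecision: the expansion must be differentiated up to four times in $x$ (three inside the equation of motion plus the final $\dbar_x$), not merely once in $x$ and once in $\tau$, which is precisely why $\bfs\in\calX_{5,r}$ is assumed.
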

\begin{proof}
We expand $\Psi$ at order $k^{-1}$ to get
\[ \dee_\tau a_1 =\dbar_x^3a_1+\partial_x^3a_1+3i\partial_x^2a_2-3\partial_x a_3-3u\partial_x a_1-3iu a_2-3\ubar \dbar_x a_1.\]
Applying the operator $i\dbar_x$ to both sides we get
\[\dee_\tau q = \dbar_x^3q+\partial_x^3q-3\partial_x^2\dbar_x a_2-3i\partial_x\dbar_x a_3-3\dbar_x(u^2)+3\dbar_x(u a_2)-3\dbar_x(\ubar q).\]
Now we use equation \eqref{eq:mu-identities} with $n=3$ to get
\begin{eqnarray*}
\dee_\tau q &=& \dbar_x^3 q+\partial_x^3 q-3\partial_x^2\dbar_x a_2+3\partial_x^2\dbar_x a_2	\\
	&&	-3\partial_x(q a_2)-3 \dbar_x(u^2)
			+3\dbar_x(u a_2)-3\dbar_x(\ubar  q).\\
\dee_\tau q &=& \dbar_x^3 q+\partial_x^3 q-3 q\partial_x a_2-3 \dbar_x(u^2)+3u\dbar_x a_2-3\dbar_x(\ubar  q)
\end{eqnarray*}

The final step is to show 
\[-3 q\partial_x a_2+3u\dbar_x a_2-3 \dbar_x(u^2)= -3\partial_x(u q).\]
We use equations \eqref{eq:dbar-a1} and \eqref{eq:d-a1} to get
\begin{eqnarray*}
3i q\partial_x\left(-\partial_x a_1+i\frac{a_1^2}{2}\right)&& 
\hspace{-0.5cm}
-3iu\dbar_x\left(-\partial_x a_1+i\frac{a_1^2}{2}\right)- 3\dbar_x(u^2)\\
&=&-3 q\partial_x u +3u\partial_x q -6u\dbar_x u\\
&=&-3 q\partial_x u-3u\partial_x q\\
&=&-3\partial_x(u q)
\end{eqnarray*}
\end{proof}

The main result of our paper now follows trivially.

\noindent
\emph{Proof of Theorem \ref{thm:nv}}.
By Lemma \ref{lem:props-of-transform} we have $\bfs\in \calX_{5,r}$ when $q_0\in W^{5,p}_\rho(\mathbb R^2)$ for $p\in(1,2)$, $\rho>1$, $\epsilon>0$, and $r\in(\tilde p',\infty)$. By Corollary \ref{cor:solveNV}, $q(x,\tau)$ solves the Novikov-Veselov equation. By Lemma \ref{lem:mapcont} part (ii)
$q(\cdot, \tau) \in C(\bbR^2 \times \bbR)$ and hence $q(x,\tau) \rarr q(x,0)$, where  $q(x,0)$, the inverse scattering transform of $\bfs(k,0)$, is the initial datum.
\hfill $\Box$

\section{Conclusion}
\label{sec.open}

We have shown that we can solve the Novikov-Veselov equation via inverse scattering for a wide range of potentials. The most significant problem left in zero-energy inverse scattering is the handling of
supercritical initial data, for which the exceptional set need not be empty and the scattering transform may have non-integrable singularities of codimension one (see, for example \cite{MPS:2013}). In addition, Taimanov and Tsarev \cite[\S 4]{TT:2010} have constructed supercritical initial data for the NV equation corresponding to a solution that blows up in finite time. This solution currently lacks in interpretation in terms of inverse scattering, and such an interpretation would be of considerable interest. 

Another open problem concerns the decay of solutions with subcritical initial data. Lassas, Mueller, and Siltanen \cite{LMS:2007} prove that certain reconstructed critical potentials have the decay $|(\mathcal Q \bft)(x)|\leq C \langle x\rangle^{-2}$. We would like to know that this result, or a similar one, holds for subcritical potentials as well, but the singularity at $k=0$ in the scattering transform poses additional difficulties.

\appendix

\section{Symmetry Property of the Scattering Transform}
\label{app:symmetry}

%
%

\newcommand{\ptilde}{\widetilde{p}}

\newcommand{\scrB}{\mathscr{B}}

\newcommand{\calG}{\mathcal{G}}

\newcommand{\xibar}{\overline{\xi}}


\refnote{This appendix replaces the erroneous of Lemma 5.1 from the previous
version of the paper}
In this Appendix we give the proof of Lemma \ref{lem:symmetry}. We consider a slightly larger class of potentials but add the assumption  that $k$ and $-k$ are not exceptional points. Recall that, for the potentials considered in the body of the paper, there are no exceptional points.

Suppose that $q \in L^p(\R^2)$ for some $p \in (1,2)$ and that $q$ is real-valued. In what follows denote by $\ptilde$ the Sobolev conjugate to $p$, i.e., $1/\ptilde = 1/p -1/2$. We also set $p^* = (\ptilde)'$. 
Recall that the function 
$\mu(x,k)$ solves the integral equation
\begin{equation}
\label{mu}
\mu(x,k)  = 1 + g_k*\left( q(\dotarg) \mu(\dotarg,k) \right)(x)
\end{equation}
where $\mu(\dotarg,k) -1 \in W^{1,\ptilde}(\R^2)$. The convolution kernel $g_k$ is given by 
\begin{equation}
\label{gk}
g_k(x) = \frac{1}{\pi^2} \int \frac{e^{ix\cdot \xi}}{\xibar(\xi + 2ik)} \, d\xi
\end{equation}
where $x \cdot \xi = x_1 \xi_1 + x_2 \xi_2$ but, in the denominator of the integrand,  $\xi = \xi_1 + i \xi_2$ and $\xibar = \xi_1 - i\xi_2$. We note the symmetries 
\cite[Theorem 3.2]{Siltanen:1999}
\begin{equation}
\label{gk.sym}
g_k(x) = g_{-k}(-x), \quad \overline{g_k(x)} = e_k(x) g_k(x).
\end{equation}

We wish to prove that
\begin{equation}
\label{t.sym}
\overline{\bft(k)} = \bft(-k)
\end{equation}
whenever $q$ is a real-valued function and $k$ and $-k$ are not exceptional points. Note that the `Fourier' transform
$$ \widehat{q}(k) = \int e_k(x) q(x) \, dx $$
satisfies this symmetry trivially for real-valued functions $q$, so it will suffice to prove that
$$ \bft(-k) - \widehat{q}(-k) = \overline{\bft(k)} - \overline{\widehat{q}(k)}. $$

Let us define
$$ {\calG_k} f = g_k * f, \quad T_k f = {\calG_k}(qf) $$
The convolution operator ${\calG_k}$ is bounded from $L^p(\R^2)$ to $W^{1,\ptilde}(\R^2)$ \cite[Lemma 1.3]{Nachman:1996}.
The operator $T_k$ is a bounded operator from $W^{1,\ptilde}(\R^2)$ to itself since any
$f \in W^{1,\ptilde}(\R^2)$ belongs to $L^\infty(\R^2)$ (as $\ptilde>2$) so that $qf \in L^p(\R^2)$. Moreover,
$T_k$ is compact because $f \mapsto qf$ is a compact map from $W^{1,\ptilde}(\R^2)$ to $L^p(\R^2)$. 
The hypothesis on exceptional points means that the operators
$(I-T_k)^{-1}$ and $(I-T_{-k})^{-1}$ exist as bounded operators
on $W^{1,\ptilde}(\R^2)$.  Note that
$$ \mu -1  =  (I-T_k)^{-1} (T_k 1). $$
The expression $T_k 1 = g_k *q$ defines an element of $W^{1,\ptilde}(\R^2)$ because 
$q \in L^p(\R^2)$. 
We denote by $T_k'$ the Banach space adjoint of $T_k$, defined as a bounded 
operator on $W^{-1,{p^*}}(\R^2)$. It follows from the first symmetry of \eqref{gk.sym} that
$$ T_k' f = q {\calG_{-k}} f$$
so that $T_k'$ restricts to a bounded operator on $L^p(\R^2)$. 

We'll need the following Banach space version of a standard commutation formula in 
Hilbert space (see \cite{Deift78} for discussion and references).

%
%

\medskip

\noindent\textbf{Lemma A.1}
\emph{
Suppose that $X$ and $Y$ are Banach spaces, and $A:X \rarr Y$ and $B: Y \rarr X$ are 
bounded operators. Then
\begin{itemize}
\item[(i)]		If $(I-AB)^{-1}$ exists, then $(I-BA)^{-1}$ exists and 
				$$ (I-BA)^{-1} = I + B(I-AB)^{-1}A.$$
\item[(ii)]	If $(I-BA)^{-1}$ exists, then $(I-AB)^{-1}$ exists and
				$$ (I-AB)^{-1} = I + A(I-BA)^{-1}B.$$
\end{itemize}
}

\medskip

\begin{proof}
A straightforward computation.
\end{proof}

\medskip

As an immediate consequence, under the hypotheses of the lemma, the identity
\begin{equation}
\label{AB-BA-pre}
(I-BA)^{-1}B = B(I-AB)^{-1} 
\end{equation} 
holds in the bounded operators on $X$.

We can apply this to $A= q$, $B = \calG_{-k}$ (meaning the operator of multiplication by $q$), $X=W^{1,\ptilde}(\R^2)$, $Y=L^p(\R^2)$. It follows from Lemma A.1 and \eqref{AB-BA-pre} that, if $-k$ is not an exceptional point, $(I-q \calG_{-k})^{-1}$ exists as a bounded operator on $L^p(\R^2)$.
and the identity
\begin{equation}
\label{AB-BA}
\calG_{-k}(I- q \calG_{-k})^{-1} = (I-T_{-k})^{-1} \calG_{-k}
\end{equation}
holds in the sense of  bounded operators from $W^{1,\ptilde}(\R^2)$ to $L^p(\R^2)$. 

In what follows, we denote by 
$$\left\langle f,g \right\rangle= \int_{\R^2} f(x) g(x) \, dx,$$
which gives the usual dual pairing between $L^p(\R^2)$ and $L^{p'}(\R^2)$. If $A:X \rarr Y$ is a linear operator, $A':Y^* \rarr X^*$ is its Banach space adjoint, i.e., $\left\langle f , Ag \right\rangle = \left\langle A'f, g \right\rangle$. From \eqref{t} it is easy to see that
\begin{eqnarray}
\label{t-.rep}
\bft(-k)		 - \widehat{q}(-k)		
		&=&	\left\langle e_{-k} q , (I-T_{-k})^{-1} T_{-k} 1 \right\rangle\\[10pt]
\label{tbar.rep}
\overline{\bft(k)}-\overline{\widehat{q}(k)}	
						&=&	\left\langle e_{-k} q,  C(I-T_k)^{-1} T_k 1  \right\rangle\\
\nonumber
						&=&	\left\langle e_{-k} q,   (I - C T_k C)^{-1} C (T_k 1)  \right\rangle\\
\nonumber
						&=&	\left\langle (I-(CT_kC)')^{-1} e_{-k}q, C (T_k 1)  \right\rangle
\end{eqnarray}
where $Cf = \overline{f}$
and we used the fact that $C1 = 1$.  Using \eqref{gk.sym},  it is easy to see that 

\begin{equation}
\label{CTC1}
C T_k C f = e_k {\calG_k} \left(e_{-k} q f \right) 
\end{equation}
and a straightforward computation shows that
\begin{equation}
\label{CTC2}
(C T_k C)' f= q e_{-k} {\calG_{-k}} (e_k f). 
\end{equation}

We can now prove Lemma \ref{lem:symmetry}.

\medskip

\noindent\emph{Proof of Lemma \ref{lem:symmetry}}.
Compute
\begin{eqnarray*}
\overline{\bft(k)} - \overline{\widehat{q}(k)}
	&=	\left\langle (I - (C T_k C)')^{-1} e_{-k}q, C (T_k 1) \right\rangle		\\
	&=	\left\langle (I - qe_{-k} {\calG_{-k}} (e_k \dotarg ))^{-1} e_{-k} q, e_k \calG_k (e_{-k} q)  \right\rangle \\
	&=	\left\langle e_{-k} (I - q\calG_{-k})^{-1} q, e_k \calG_{k} (e_{-k} q) \right\rangle\\
	&=   	\left\langle \calG_{-k} (I-q \calG_{-k} )^{-1} q, e_{-k}q \right\rangle\\
	&=	\left\langle (I-T_{-k})^{-1} T_{-k} 1, e_{-k} q \right\rangle\\
	&=	\bft(-k)-\widehat{q}(-k)
\end{eqnarray*}
where in the first step we used \eqref{tbar.rep}, in the second step we used \eqref{CTC1} and \eqref{CTC2}, in the third step we used the identity
$$(I-e_{-k} A (e_{k} \dotarg))^{-1} = e_{-k} (I-A)^{-1} e_k, $$ 
in the fourth step we used 
$\left(\calG_{k}\right)' = \calG_{-k}$, in the fifth step we used \eqref{AB-BA}, and in the final step we used \eqref{t-.rep}. \hfill $\Box$

\end{document}